\documentclass[preprint,onefignum,onetabnum]{siamart220329}



\usepackage{lipsum}
\usepackage{amsfonts}
\usepackage{graphicx}
\usepackage{epstopdf}
\usepackage{algorithmic}
\ifpdf
  \DeclareGraphicsExtensions{.eps,.pdf,.png,.jpg}
\else
  \DeclareGraphicsExtensions{.eps}
\fi


\newsiamremark{remark}{Remark}
\newsiamremark{hypothesis}{Hypothesis}
\crefname{hypothesis}{Hypothesis}{Hypotheses}
\newsiamthm{claim}{Claim}

\usepackage{amsopn}

\DeclareMathOperator*{\argmin}{argmin}



\newcommand{\R}{\mathbb{R}}

\newcommand{\N}{\mathbb{N}}

\newcommand{\Fcal}{\mathcal{F}}

\newcommand{\Kcal}{\mathcal{K}}

\newcommand{\Scal}{\mathcal{S}}

\newcommand{\va}{\boldsymbol{a}}

\newcommand{\f}{\boldsymbol{f}}
\newcommand{\g}{\boldsymbol{g}}
\newcommand{\x}{\boldsymbol{x}}
\newcommand{\y}{\boldsymbol{y}}

\newcommand{\vr}{\boldsymbol{r}}

\newcommand{\uu}{\boldsymbol{u}}
\newcommand{\vv}{\boldsymbol{v}}
\newcommand{\vpsi}{\boldsymbol{\Psi}}
\newcommand{\vk}{\boldsymbol{k}}

\newcommand{\ttheta}{\boldsymbol{\theta}}

\newcommand{\pk}{PK-NN}


\usepackage{color}
\usepackage{subcaption}
\usepackage{amsmath}
\usepackage{amssymb}
\usepackage{enumitem}
\usepackage{bbm}
\usepackage{bm}
\usepackage{hyperref}
\usepackage{verbatim}
\usepackage{float}
\usepackage{booktabs}
\usepackage{multirow}
\usepackage{siunitx}
\usepackage{diagbox} 

\usepackage{xcolor}
\usepackage{cleveref}

\usepackage{lineno}
\nolinenumbers

\usepackage{marginnote}


\title{Learning Parametric Koopman Decompositions
for Prediction and Control\thanks{
\funding{This research is part of the programme DesCartes and is supported by the National Research Foundation, Prime Minister's Office, Singapore under its Campus for Research Excellence and Technological Enterprise (CREATE) programme.
GY is supported by the National Research Foundation,
Singapore, under the NRF fellowship (project No. NRF-NRFF13-2021-0005).
The work of IGK is partially supported by the US Department of Energy
and the US Air Force Office of Scientific Research. The work of MK is supported by the AI Interdisciplinary Institute ANITI funding, through the
French “Investing for the Future PIA3” program under the Grant agreement n$^\circ$ ANR-19-PI3A-0004. This work is also co-funded by the European Union under the project ROBOPROX (reg.~no.~CZ.02.01.01/00/22\_008/0004590).}
}}

\headers{Learning Parametric Koopman Decompositions}{Y. Guo, M. Korda, I.G. Kevrekidis, Q. Li.}

\author{
    Yue Guo\thanks{Department of Mathematics, National University of Singapore, 117543, Singapore (\email{guoyue@u.nus.edu}).}
    \and
    Milan Korda\thanks{CNRS, LAAS, 7 avenue du colonel Roche, F-31400 Toulouse, France; Faculty of Electrical Engineering, Czech Technical University in Prague, Prague, Czech Republic; CNRS@CREATE LTD, 1 Create Way, CREATE Tower, 138602, Singapore (\email{korda@laas.fr}).}
    \and
    Ioannis G. Kevrekidis\thanks{Department of Chemical and Biomolecular Engineering, Johns Hopkins University, 3400 North Charles Street
    Baltimore, MD 21218, USA (\email{yannisk@jhu.edu}).}
    \and
    Qianxiao Li\thanks{Department of Mathematics \& Institute for Functional Intelligent Materials, National University of Singapore, 117543, Singapore; CNRS@CREATE LTD, 1 Create Way, CREATE Tower, 138602, Singapore (\email{qianxiao@nus.edu.sg}).}
}

\begin{document}
\maketitle

\begin{abstract}
    We present an approach to constructing approximate Koopman-type decompositions
    for dynamical systems depending on static or time-varying parameters.
    Our method simultaneously constructs
    an invariant subspace \emph{and} a parametric family of projected Koopman operators
    acting on this subspace.
    We parametrize \emph{both the projected Koopman operator family and
    the dictionary that spans the invariant subspace}
    by neural networks, and jointly train them with trajectory data.
    We show theoretically the validity of our approach,
    and demonstrate
    via numerical experiments that
    it exhibits significant improvements over existing methods
    in solving prediction problems,
    especially those with large state or parameter dimensions,
    and those possessing strongly non-linear dynamics.
    Moreover, our method enables data-driven solution of optimal control
    problems involving non-linear dynamics,
    with some interesting implications on controllability.
\end{abstract}

\begin{keywords}
Koopman Operator, Non-autonomous Dynamics, Machine Learning, Invariant Subspace, Control
\end{keywords}

\begin{MSCcodes}
47N70, 37N35, 49M99
\end{MSCcodes}

\section{Introduction}
\label{sec:introduction}

Parametric models play a crucial role in modelling dynamical processes,
allowing one to analyze, optimize, and control them by capturing the relationship
between system behaviour and input parameters.
However, in many scenarios, complete knowledge of the dynamics
may be unavailable and only trajectory data is accessible.
Discovering the relationship
between states and parameters by data-driven approaches is particularly challenging,
especially in non-linear and high-dimensional systems.
The Koopman operator approach~\cite{koopman1931hamiltonian}, initially developed
to convert autonomous non-linear dynamical systems into infinite-dimensional linear
systems has emerged as a powerful tool for spectral analysis and identification
of significant dynamic modes for autonomous systems
\cite{rowley2009spectral, kaiser2017data, budivsic2012applied,
parmar2020survey}.
Various data-driven strategies,
such as dynamic mode decomposition (DMD)~\cite{schmid2010dynamic,
rowley2009spectral, tu2014dynamic} and extended DMD (EDMD)
\cite{williams2014kernel, williams2015data}, have been proposed for
approximating the Koopman operator from data.

In this study, we propose an extension of Koopman operator's application to parametric \emph{
discrete-time} dynamics.
To parametrize and approximate a parameter-dependent Koopman operator
within an invariant subspace, we propose a learning-based method that combines
the ideas in extended dynamic mode decomposition with dictionary learning
(EDMD-DL)~\cite{li2017extended} and a general form of the parametric Koopman
operator that is expanded on a set of \textit{fixed} basis functions
\cite{williams2016extending}.
Our approach is suited for high-dimensional
and strongly non-linear systems, and for both forward non-autonomous prediction
problems and optimal control problems.
Currently, there are many works on the incorporation of parameters in
Koopman decomposition, but they are mostly limited to linear or
bilinear dynamics in the observable space.
Some studies focus on the linear form related to DMD with
controls~\cite{proctor2016dynamic, proctor2018generalizing} and EDMD with
control~\cite{korda2018linear}, which has been applied to system identification
\cite{arbabi2018data, folkestad2020extended, son2021application, lin2021data,
narasingam2020application}, especially for soft robotic system
\cite{haggerty2020modeling,bruder2021koopman}. Recent studies have introduced various parametric adaptations to DMD, utilizing interpolation techniques for enhancing predictions in non-linear dynamics and across parameter spaces. In~\cite{sun2023parametric}, the authors employ Radial Basis Function (RBF) interpolation for mapping parameters to time snapshots, enabling the incorporation of new parameters with either exact DMD or Kernel DMD. A distinct approach is introduced in \cite{andreuzzi2023dynamic}, where DMD is learned for various parameters, and predictions for new parameters are made by modelling the relationship between parameters and their outcomes. 
Another DMD-based approach emphasizes deriving parameter information from the nearest data points, utilizing methods such as Lagrangian interpolation to improve prediction precision \cite{huhn2023parametric}.
Our approach learns a \emph{non-linear} representation of parameters and states within a linear framework, bypassing the need for additional computations like nearest neighbourhood identification and the limitation of the linearity of DMD. This strategy ensures robust continuity and adaptability in the parameter domain, offering effective generalization to new parameter scenarios. When the system
exhibits deviation from linearity, a bilinear form of Koopman dynamics has been explored for adaptation~\cite{surana2016koopman, goswami2017global, peitz2020data,
bruder2021advantages, folkestad2021koopman, strasser2023robust}.
However, these approaches may not be able to address applications
involving strongly non-linear parametric dynamics.
We tackle this problem by embedding the non-linear parameter dependence
directly into the finite-dimensional approximation of the Koopman operator,
which acts over a constructed common invariant subspace \emph{over the entire parameter space}.
Naturally, the simultaneous construction of a parametric Koopman operator
approximation \emph{as well as} a \emph{common} invariant subspace is challenging.
To resolve this, we leverage the function approximation capabilities of
neural networks and jointly train them from data.
We show that our approach can capture intricate non-linear relationships between
high-dimensional states and parameters,
and can be used to solve, in a data-driven way, both parametrically varying prediction
problems and optimal control problems.

The paper is organized as
follows: In~\cref{sec:problem_formulation}, we introduce our definition of
parametric dynamical systems and the parametric Koopman operator. In
\cref{sec:koopman_decomposition}, we present our method for finding a
common invariant subspace and approximating the projected Koopman operator
on this subspace.
In~\cref{sec:results},
we demonstrate, using a variety of numerical examples,
that our approach provides performance improvements
over existing methods.

\section{Koopman operator family for parametric dynamical systems}
\label{sec:problem_formulation}

We begin by introducing the basic formulation of Koopman operator analysis for parametric dynamical systems that we adopt throughout this paper.

\subsection{Parametric dynamical systems}
\label{sec:ParametricDynamicalSystems}

Let $(X, \Scal, m)$, with $X \subseteq \R^{N_{x}}$
be a finite measure space
where $\Scal$ is the Borel $\sigma$-algebra
and $m$ is a measure on $(X, \Scal)$.
Consider a set $U \subseteq \R^{N_{u}}$ of parameters,
and a corresponding parametric family of
transformations $X \to X$
\begin{align}
    \Fcal = \{\f(\cdot, \uu): \uu \in U\}.
\end{align}
This family of transformations can be used to
define parametric discrete-time dynamics
($\x_{n+1} = \f(\x_{n}, \uu)$, $n=0,1,\dots$)
where $\uu$ remains static,
or control systems
($\x_{n+1} = \f(\x_{n},\uu_{n})$, $n=0,1,\dots$)
where $\uu_n$ changes in discrete steps dynamically.
In both cases, at each time instant $n$
the state evolves according to one
member of the family
$
    \{
        \x \mapsto \f (\x) : \f \in \Fcal
    \}
$.
We note that such discrete dynamics can be
also used to model continuous ones
via time-discretization.
In the case of continuous control systems,
we shall only consider those controls that can be
well-approximated by piecewise constant-in-time
functions, e.g. essentially bounded controls.

\subsection{Parametric Koopman operators}
\label{sec:ParmetricKoopmanOperator}

The main idea of the Koopman operator approach
lies in understanding the \emph{evolution of observables over time},
rather than the evolution of individual states themselves.
To this end, let us consider the space of square-integrable
observables
\begin{align}
    L^{2}(X,m) = \{\phi: X \to \R : \|\phi\|_{L^{2}(X,m)} < \infty\}
\end{align}
with the inner product $\langle\phi, \psi\rangle = \int_{X}\phi(\x)\psi(\x)m(\bm d \x)$
and norm
$\|\phi\|_{L^2(X, m)}=|\langle \phi, \phi \rangle|^{\frac{1}{2}}$.
When there is no ambiguity, we will write $L^2$ to mean $L^2(X,m)$.
It is worth noting that a similar definition can also be provided for complex observables, but the analysis and experiments in this study focus on real-valued observables. Therefore, we use $\R$ as the range of observables.
Given any observable $\phi \in L^{2}$,
for each $\uu$,
we have $\phi(\x_{n+1}) = \phi(\f(\x_{n}, \uu))= \phi \circ \f(\x_{n}, \uu)$,
where the composition is in the $\x$ variable.
Hence, the dynamics $\x_{n+1} = \f(\x_{n}, \uu)$ induces a dynamics on the space of the observables
$\phi(\x_{n+1}) = \Kcal(\uu)\phi(\x_{n})$,
where $\Kcal(\uu) : L^{2} \to L^{2}$
is the \emph{parametric Koopman operator},
defined by
\begin{align}\label{eq:pkoopman_ref}
    \Kcal(\uu)\phi(\x) \triangleq \phi \circ \f(\x, \uu).
\end{align}
For each $\uu$, $\Kcal(\uu)$ is linear
since $\Kcal(\uu)(a \phi_{1} +b \phi_{2}) = a \Kcal(\uu)\phi_{1} + b \Kcal(\uu)\phi_{2}$
for $a,b \in \R$ and $\phi_1,\phi_2 \in L^2$.
\Cref{prop:parametric composition operator}
gives conditions
for $\{ \mathcal K(\uu) \}$ to be a well-defined family
of bounded linear operators on $L^2\to L^2$,
which we denote by $\mathcal{B}(L^2)$.
This follows directly from known results in autonomous systems~\cite{singh1993composition},
but we include its proof in~\cref{proof:Continuous Operators} for completeness.
We hereafter assume that these conditions are satisfied.

\begin{proposition}
\label{prop:parametric composition operator}
    Let $\Fcal = \{\f(\cdot, \uu): \uu \in U\}$
    be a family of non-singular transformations on $X$,
    i.e., for every $S \in \Scal$, $m(\f^{-1}(\cdot, \uu)(S)) = 0$ whenever $m(S)=0$.
    Then, $\mathcal K(\uu) \in \mathcal{B}(L^{2})$
    if and only if there exists $b_{\uu}>0$ such that $m(\f^{-1}(\cdot,
    \uu)(S)) \leq b_{\uu} m(S)$ for every $S \in \Scal$.
\end{proposition}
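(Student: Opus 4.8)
The plan is to reduce the parametric statement, at each fixed $\uu \in U$, to the classical characterization of bounded composition (Koopman) operators on $L^2$ of a $\sigma$-finite measure space, as found in~\cite{singh1993composition}. Throughout, $\uu$ is held fixed and we abbreviate $T := \f(\cdot,\uu): X \to X$, so that $\Kcal(\uu)\phi = \phi \circ T$; nonsingularity of $\Fcal$ means exactly that the pushforward-type measure $m \circ T^{-1}$ is absolutely continuous with respect to $m$.

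\emph{Sufficiency.} Suppose there is $b_{\uu}>0$ with $m(T^{-1}(S)) \le b_{\uu}\, m(S)$ for all $S \in \Scal$. First I would observe that $\Kcal(\uu)$ is well-defined on $L^2$: if $\phi = \psi$ $m$-a.e., then $\{\phi \circ T \ne \psi \circ T\} = T^{-1}(\{\phi \ne \psi\})$ has $m$-measure zero by the hypothesis, so the operator does not depend on the representative. For boundedness, use the layer-cake / change-of-variables identity
\begin{align}
    \|\Kcal(\uu)\phi\|_{L^2}^2 = \int_X |\phi(T(\x))|^2\, m(\bm d\x) = \int_X |\phi(\y)|^2\, (m\circ T^{-1})(\bm d\y),
\end{align}
which holds for nonnegative measurable integrands by the standard approximation argument (indicators, then simple functions, then monotone convergence). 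The hypothesis $m(T^{-1}(S)) \le b_{\uu} m(S)$ says precisely that the Radon--Nikodym derivative $\dd(m\circ T^{-1})/\dd m$ is bounded above by $b_{\uu}$ $m$-a.e., hence the last integral is at most $b_{\uu}\int_X |\phi(\y)|^2 m(\bm d\y) = b_{\uu}\|\phi\|_{L^2}^2$. Therefore $\Kcal(\uu) \in \mathcal{B}(L^2)$ with $\|\Kcal(\uu)\| \le \sqrt{b_{\uu}}$.

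\emph{Necessity.} Conversely, suppose $\Kcal(\uu) \in \mathcal{B}(L^2)$ with norm $M_{\uu} := \|\Kcal(\uu)\|$. For any $S \in \Scal$ with $m(S) < \infty$, test the operator on $\phi = \ind_S \in L^2$: since $\ind_S \circ T = \ind_{T^{-1}(S)}$, boundedness gives
\begin{align}
    m(T^{-1}(S)) = \|\ind_{T^{-1}(S)}\|_{L^2}^2 = \|\Kcal(\uu)\ind_S\|_{L^2}^2 \le M_{\uu}^2\, \|\ind_S\|_{L^2}^2 = M_{\uu}^2\, m(S),
\end{align}
so $b_{\uu} := M_{\uu}^2$ works for all finite-measure $S$; since $(X,\Scal,m)$ is a finite measure space this is every $S \in \Scal$, and in fact $m(X)<\infty$ makes the argument completely uniform. (If one wanted the $\sigma$-finite version, one would exhaust $X$ by finite-measure sets and pass to the limit; here finiteness of $m$ removes that step.)

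The main obstacle — really the only nontrivial point — is justifying the change-of-variables formula and the identification of the hypothesis with an a.e. bound on the Radon--Nikodym derivative $\dd(m\circ T^{-1})/\dd m$; the well-definedness and the test-function computations are routine. Since this is exactly the content of the autonomous theory in~\cite{singh1993composition}, applied verbatim for each fixed $\uu$, I would keep this proof brief and cite that reference for the measure-theoretic details, noting that nothing changes when a parameter is present because the argument never couples different values of $\uu$.
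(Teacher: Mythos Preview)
Your proof is correct and follows essentially the same approach as the paper's: testing on indicator functions $\ind_S$ to obtain $b_{\uu}=\|\Kcal(\uu)\|^2$ for necessity, and invoking the Radon--Nikodym derivative of $m\circ T^{-1}$ with respect to $m$ together with the change-of-variables identity for sufficiency. The only differences are cosmetic (you add the well-definedness remark and the $\sigma$-finite aside), so there is nothing to flag.
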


Just like the parametric family of transformations $\{f(\cdot,\uu)\}$,
the Koopman operator family $\{\Kcal(\uu)\}$ can drive both discrete-time parametric dynamics
($\phi_{n+1} = \Kcal(\uu) \phi_{n}$)
or discrete-time control systems
($\phi_{n+1} = \Kcal(\uu_{n}) \phi_{n}$),
both now in the $L^2$ space of observables.

\section{Invariant subspace and finite-dimensional approximation of Koopman operator}
\label{sec:koopman_decomposition}
The central challenge of the Koopman approach is how to efficiently find an
invariant subspace and approximate a projected Koopman operator acting on this
subspace.  To achieve this for \emph{parametric} cases, we generalize the Extended Dynamic
Mode Decomposition with Dictionary Learning (EDMD-DL)
\cite{li2017extended}, but the family of Koopman evolution
operators are approximated by a matrix-valued function of the parameter, acting
on an invariant subspace of observables spanned by \emph{parameter-independent
dictionaries}. In our approach, both the evolution matrix and the dictionary are parameterized by
neural networks and jointly trained.
Then, we can use the trained parametric Koopman operator
on prediction and control problems
defined on the invariant subspace.

\subsection{Numerical methods for autonomous dynamics}
\label{sec:koopman_EDMD_DL}
We first recall the method in~\cite{li2017extended}
to find an invariant subspace and approximate the
Koopman operator on this subspace
for autonomous (non-parametric) dynamical systems
of the form
\begin{align}
    \label{eq:ori_koopman_evol}
    \begin{split}
    \x_{n+1}&=\f(\x_{n}),\\
    \y_{n}&=\g(\x_{n}).
    \end{split}
\end{align}
Here, $\g$ is a length-$N_y$
vector of $L^2$ observable functions $\y$,
whose evolution we are interested in modelling.
The goal is then to find a finite-dimensional
subspace $H \subset L^2$ that contains the
components of $\g$,
and moreover is \emph{invariant}
under the action of the Koopman operator
$\Kcal(\phi) = \phi \circ \f$,
i.e. $\Kcal(H) \subset H$,
at least approximately.
A simple but effective method~\cite{williams2015data}
is to build $H$ as the span of a
set of dictionary functions
$\{\psi_1, \psi_2, \dots, \psi_{N_\psi}\}$ where $\psi_i\in L^{2}(X,m)$,
with $\psi_{i}=g_i$ for $i=1,\dots,N_y$.
Let us write $\vpsi := (\psi_1, \psi_2, \dots, \psi_{N_\psi})^T$
and consider the subspace
$\text{span}(\vpsi)=\{\va^T\vpsi: \va\in\R^{N_\psi}\}$.
For each
$\phi \in \text{span}(\vpsi)$, we can write
$\Kcal \phi=\va^T\Kcal\vpsi=\va^T\vpsi\circ \f$. If we assume that $\Kcal(\text{span}(\vpsi))\subset \text{span}(\vpsi)$, which is equivalent to the existence of $\vk_i \in \R^{N_\psi}$ such that $\mathcal{K}\psi_i=\vk_i^T\vpsi$ for each $i=1,2,\dots,N_\psi$, then the Koopman operator $\mathcal{K}$ can be represented by a finite dimensional matrix $K\in\R^{N_\psi\times N_\psi}$ whose $i^\text{th}$ row is $\vk_i^T$.
In this case, we call $\text{span}(\vpsi)$ a \emph{Koopman invariant subspace}.

In practice, the matrix $K$, and the invariant subspace, can only be found as approximations.
Concretely, one first takes a sufficiently large, fixed dictionary $\vpsi$.
From the dynamical system (\ref{eq:ori_koopman_evol}),
we collect data pairs
$\{\x^{(m)}_{n+1}, \x^{(m)}_{n}\}_{n, m=0}^{N-1, M-1}$,
where $\x^{(m)}_{n+1} = \f(\x^{(m)}_{n})$ and $\x^{(m)}_{n}$
is the state on the $m^\text{th}$ trajectory at time $n$.
Then, an approximation of the Koopman operator
on this subspace is computed via least squares
\begin{align}
    \label{eq:orikmin}
    \hat{K}=\argmin_{K\in \R^{N_\psi\times N_\psi}}\sum_{n, m=0}^{N-1, M-1}||\vpsi(\x^{(m)}_{n+1})-K\vpsi(\x^{(m)}_{n})||^2.
\end{align}
Assuming the data is in a general position, the solution
is guaranteed to be unique when the number of data pairs is at least equal to or
larger than the dimension of the dictionary $\vpsi$.
Otherwise, a regularizer can be incorporated to ensure uniqueness.

Although the solution for (\ref{eq:orikmin}) is straightforward, choosing the
dictionary set $\vpsi$ is a non-trivial task, especially for high-dimensional
dynamical systems~\cite{korda2018convergence, williams2014kernel}.
Consequently, machine learning techniques have been employed to overcome
this limitation by learning an adaptive dictionary from data~\cite{li2017extended, erichson2019physics, lusch2018deep, otto2019linearly, pan2020physics,
takeishi2017learning}.
The simplest approach~\cite{li2017extended} is to parametrize $\vpsi$ using a
neural network with trainable weights $\ttheta_{\psi}$, so that
$\vpsi(\cdot;\ttheta_\psi)$ is \emph{a set of trainable dictionary functions}.
The method iteratively updates $\ttheta_\psi$ and the matrix $K$ by minimizing the
loss function
\begin{align}
\label{eq:dlloss}
L({K},\ttheta_\psi)=\sum_{n, m=0}^{N-1, M-1}||\vpsi(\x_{n+1}^{(m)};\ttheta_{\psi})-{K}\vpsi(\x_{n}^{(m)};\ttheta_{\psi})||^2,
\end{align}
over $K$ and $\ttheta_\psi$.
We note that other losses can be used to promote different
properties of the learned invariant subspace.
For example, one may focus on minimizing
the worst-case error~\cite{haseli2022temporal},
instead of the average error considered above.

\subsection{Numerical methods for parametric dynamics}
\label{sec:NN_koopman}

We now extend the previous algorithm
to the parametric case
\begin{align}
    \label{eq:param_koopman_evol}
    \begin{split}
    \x_{n+1}&=\f(\x_{n},\uu_{n}),\\
    \y_{n}&=\g(\x_{n}).
    \end{split}
\end{align}
This includes the static setting by setting $\uu_n$ to be constant over the $n$-th time interval.
Now, the natural extension of the method for
autonomous dynamics is to
\begin{enumerate}
    \item
    Find a dictionary $\vpsi$
    whose elements are in $L^2$
    (the first $N_y$ of which are $\g$)
    such that $\text{span}(\vpsi)$ is invariant under $\Kcal(\uu)$
    \emph{for all $\uu \in U$}.
    \item
    Construct a $N_\Psi \times N_\Psi$
    matrix-valued function $K(\uu)$
    that approximates $\Kcal(\uu)$
    on $\text{span}(\vpsi)$.
\end{enumerate}
However, the validity of this procedure is not immediately obvious,
since finding a common invariant subspace for all parameters,
even approximately, may be challenging.
We first show in~\cref{prop:finite_param_koopman} below
that this is theoretically possible under appropriate conditions,
and subsequently in~\cref{sec:results} that
it can be achieved in practice.
In the following, $C_b(X)$ denotes the space of
continuous bounded functions on $X$
and $C(U)$ denotes the space of continuous functions
on $U$. We also assume that the state space $X$ is a finite measure space
and consider observables in
$C_b(X) \subset L^2(X,m)$. The proof is found in~\cref{proof:finite_param_koopman}.

\begin{proposition} 
    \label{prop:finite_param_koopman}
    Let $U$ be compact and $X$ be a finite measurable space,
    the observables of interest
    $\g$ satisfy $g_j \in C_b(X)$ 
    and suppose that
    $f(\x,\cdot): U \to X$ is continuous for $m$-a.e. $\x \in X$.
    Then, for any $\varepsilon > 0$,
    there exists a positive integer $N_\psi > 0$,
    a dictionary $\vpsi = \{\psi_1,\dots,\psi_{N_\psi}\}$
    with $\psi_i \in L^2$,
    a set of vectors $\{\va_{j} \in \R^{N_{\psi}}: j=1,\dots,N_y\}$,
    and a continuous function $K : U \to \R^{N_\Psi \times N_\Psi}$
    such that $g_j = \va_j^T \vpsi$ and for all $\uu \in U$,
    we have
    $\|\Kcal(\uu)g_{j} - \va_{j}^{T}K(\uu)\vpsi\|\leq \varepsilon$.
\end{proposition}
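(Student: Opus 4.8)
The plan is to reinterpret the conclusion as a statement about approximating a \emph{compact} family of curves in $L^2$ by a single finite-dimensional subspace, together with a continuity-in-$\uu$ bookkeeping argument for the representing matrix. Throughout I use that, since $g_j \in C_b(X)$, $\f(\cdot,\uu)$ is measurable, and $m(X)<\infty$, each $\Kcal(\uu)g_j = g_j\circ\f(\cdot,\uu)$ indeed lies in $L^2$.

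\textbf{Step 1: $L^2$-continuity of the Koopman-image curves.} First I would show that for each $j$ the map $\uu \mapsto \Kcal(\uu)g_j$ is continuous from $U$ into $L^2(X,m)$. Fix $\uu\in U$ and $\uu_n \to \uu$. For $m$-a.e.\ $\x$, continuity of $\f(\x,\cdot)$ and of $g_j$ gives $g_j(\f(\x,\uu_n)) \to g_j(\f(\x,\uu))$ pointwise, while $|g_j(\f(\x,\uu_n)) - g_j(\f(\x,\uu))|^2 \le 4\|g_j\|_\infty^2$ is $m$-integrable; dominated convergence then yields $\|\Kcal(\uu_n)g_j - \Kcal(\uu)g_j\|_{L^2} \to 0$. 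Since $U \subseteq \R^{N_u}$ is metrizable, sequential continuity suffices.

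\textbf{Step 2: finite-dimensional approximation of a compact set.} Because $U$ is compact and the maps of Step 1 are continuous, $C := \{\Kcal(\uu)g_j : \uu\in U,\ 1\le j\le N_y\}$ is a compact subset of $L^2$ (a finite union of continuous images of $U$), hence totally bounded: there are $v_1,\dots,v_m \in C$ with $C \subseteq \bigcup_i B(v_i,\varepsilon)$, so with $V_0 := \mathrm{span}\{v_1,\dots,v_m\}$ one has $\mathrm{dist}(w,V_0)\le\varepsilon$ for all $w\in C$. Set $V := V_0 + \mathrm{span}\{g_1,\dots,g_{N_y}\}$, let $\vpsi = (g_1,\dots,g_{N_y},\eta_1,\dots,\eta_r)$ be any finite spanning list of $V$ whose first $N_y$ entries are the $g_j$ (for instance append a basis of $V_0$), put $N_\psi := N_y + r$, and take $\va_j := e_j \in \R^{N_\psi}$, so that $g_j = \va_j^T\vpsi$ holds exactly.

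\textbf{Step 3: the continuous matrix $K(\uu)$.} Let $P:L^2\to V$ be the orthogonal projection; since $V_0\subseteq V$, for all $\uu$ and $j$ we get $\|\Kcal(\uu)g_j - P\Kcal(\uu)g_j\| = \mathrm{dist}(\Kcal(\uu)g_j,V) \le \mathrm{dist}(\Kcal(\uu)g_j,V_0) \le \varepsilon$. The linear surjection $T:\R^{N_\psi}\to V$, $T\vc := \vc^T\vpsi$, admits a bounded linear right inverse $T^{+}$ (its Moore--Penrose pseudoinverse), so $\vc_j(\uu) := T^{+}\!\big(P\Kcal(\uu)g_j\big)$ depends continuously on $\uu$ and satisfies $\vc_j(\uu)^T\vpsi = P\Kcal(\uu)g_j$. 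Define $K(\uu)\in\R^{N_\psi\times N_\psi}$ to have $j$-th row $\vc_j(\uu)^T$ for $j=1,\dots,N_y$ and all other rows zero; then $K$ is continuous and $\va_j^T K(\uu)\vpsi = \vc_j(\uu)^T\vpsi = P\Kcal(\uu)g_j$, whence $\|\Kcal(\uu)g_j - \va_j^T K(\uu)\vpsi\| \le \varepsilon$ for every $\uu\in U$.

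\textbf{Main obstacle.} The only genuinely analytic point is Step 1, the $L^2$-continuity of $\uu\mapsto\Kcal(\uu)g_j$, which is exactly where the hypotheses ($\f(\x,\cdot)$ continuous for $m$-a.e.\ $\x$, $g_j\in C_b(X)$, $m$ finite) are used, via dominated convergence. Everything after that is soft functional analysis: compactness $\Rightarrow$ total boundedness $\Rightarrow$ approximability by a finite-dimensional subspace, plus the observation that stacking the coefficient vectors $\vc_j(\uu)$ into the first $N_y$ rows of $K(\uu)$ (and zeroing the rest) makes one continuous matrix-valued function handle all observables simultaneously. A minor point to keep clean is that $\vpsi$ need only \emph{span} $V$, not be a basis, so that $T^{+}$ and hence the continuity of $\vc_j(\cdot)$ are available without any independence assumption on $\{g_j\}$ or on the chosen dictionary.
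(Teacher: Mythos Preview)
Your proof is correct and takes a genuinely different, more streamlined route than the paper. Both arguments share the same analytic core---your Step~1 is exactly the paper's Lemma~B.2 (dominated convergence using $g_j\in C_b(X)$, $m$ finite, and a.e.\ continuity of $\f(\x,\cdot)$). The divergence is in how the finite-dimensional subspace is obtained. The paper fixes in advance a sequence $(\psi_i)_{i\ge 1}$ with dense span in $C_b(X)$ (with $\psi_j=g_j$ for $j\le N_y$), covers $U$ by finitely many $\delta$-balls using uniform continuity of $\uu\mapsto\Kcal(\uu)g_j$, invokes a separate finite-parameter proposition at the ball centers to choose a truncation level $N_\psi$, and then patches via a three-term triangle inequality to reach $\|\Kcal(\uu)g_j-P_{N_\psi}\Kcal(\uu)g_j\|\le\varepsilon$ for all $\uu$. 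You instead observe that $\{\Kcal(\uu)g_j:\uu\in U,\,j\le N_y\}$ is a \emph{compact} subset of $L^2$, take an $\varepsilon$-net, and span it together with the $g_j$; this collapses the cover/triangle-inequality machinery into one line of total boundedness. Your pseudoinverse construction of $K(\uu)$ is also tidier than the paper's Step~5, which asserts continuity of the entries of $K$ from continuity of $\va_j^\top K(\cdot)\vpsi$ without addressing possible linear dependence in $\vpsi$; your $T^{+}$ handles that explicitly. The trade-off is only interpretive: the paper's dictionary lives in a prescribed family $(\psi_i)$ (suggestive of the applied setting where one truncates a fixed hypothesis class), whereas your dictionary elements $v_i$ are abstract $\varepsilon$-net points in $L^2$ with no a~priori regularity beyond membership in $L^2$---which is all the proposition demands.
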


\Cref{prop:finite_param_koopman} shows that it is possible to construct a
parameter-independent subspace such that the one-step evolution
of the observables are approximately closed,
and that the projected Koopman operator on this subspace is a continuous function.
In particular, this suggests that we can use neural networks
to approximate both the dictionary elements as
$\Psi(\cdot;\ttheta_{\psi}): X \to \R^{N_{\psi}}$,
and the projected Koopman operator as
$K(\uu ; \ttheta_K): U \to \R^{N_{\psi} \times N_{\psi}}$.
As before, we fix the first $N_y$ entries in $\vpsi$
to be $\g$, while the remaining $N_\psi - N_y$ entries are learned.
The observables $\g$ can be recovered by $\g = B \vpsi$ with
\begin{align}\label{eq:bdef}
        B=\begin{bmatrix} I_{N_y} & O_{N_y\times (N_\psi-N_y)} \end{bmatrix}_{N_{y} \times N_{\psi}},
\end{align}
where $I_{N_y}$ is a $N_y\times N_y$ identity matrix and $O_{N_y\times (N_\psi - N_y)}$ is a $N_y\times (N_\psi - N_y)$ zero matrix.
We train $\vpsi$ and $K$ by collecting data triples
$
\{\x_{n+1}^{(m)}, \x_{n}^{(m)}, \uu_{n}^{(m)}\}_{n,m=0}^{N-1, M-1}
$,
where $\x_{n+1}^{(m)} = \f(\x_{n}^{(m)}, \uu_{n}^{(m)})$, $\x_{n}^{(m)}$ and
$\uu_{n}^{(m)}$ are states and parameters on the $m^\text{th}$ trajectory at
time $n$.
We then minimize the loss function
\begin{align}
    \label{loss_pk_edmd_dl}
      L(\ttheta_{K},\ttheta_\psi)=\sum_{n, m=0}^{N-1, M-1}||\vpsi(\x_{n+1}^{(m)};\ttheta_{\psi})-K(\uu_{n}^{(m)}; \ttheta_{K})\vpsi(\x_{n}^{(m)};\ttheta_{\psi})||^2,
\end{align}
over $\ttheta_K$ and $\ttheta_\psi$.
A graphical representation of our approach
to learn parametric Koopman dynamics,
which we call \pk, is shown
in~\cref{fig:nn_structure} and the training workflow
is summarised in~\cref{alg:PK-EDMD-DL}.
The weights are initialized using the Glorot Uniform initializer~\cite{glorot2010understanding}.
The architecture of $NN_{K}$ consists of a multi-layer, fully connected neural network,
while $NN_{\psi}$ is a residual network to improve optimization performance.
We use the hyperbolic tangent as the activation function for all hidden layers
and a dense layer without activations as the output layers.
The detailed choices of model sizes differ by application and
are included in the corresponding experimental sections.
The Adam optimizer is used for all experiments~\cite{kingma2014adam}.

\begin{figure}[htb!]
	\centering
	\includegraphics[width=0.8\textwidth]{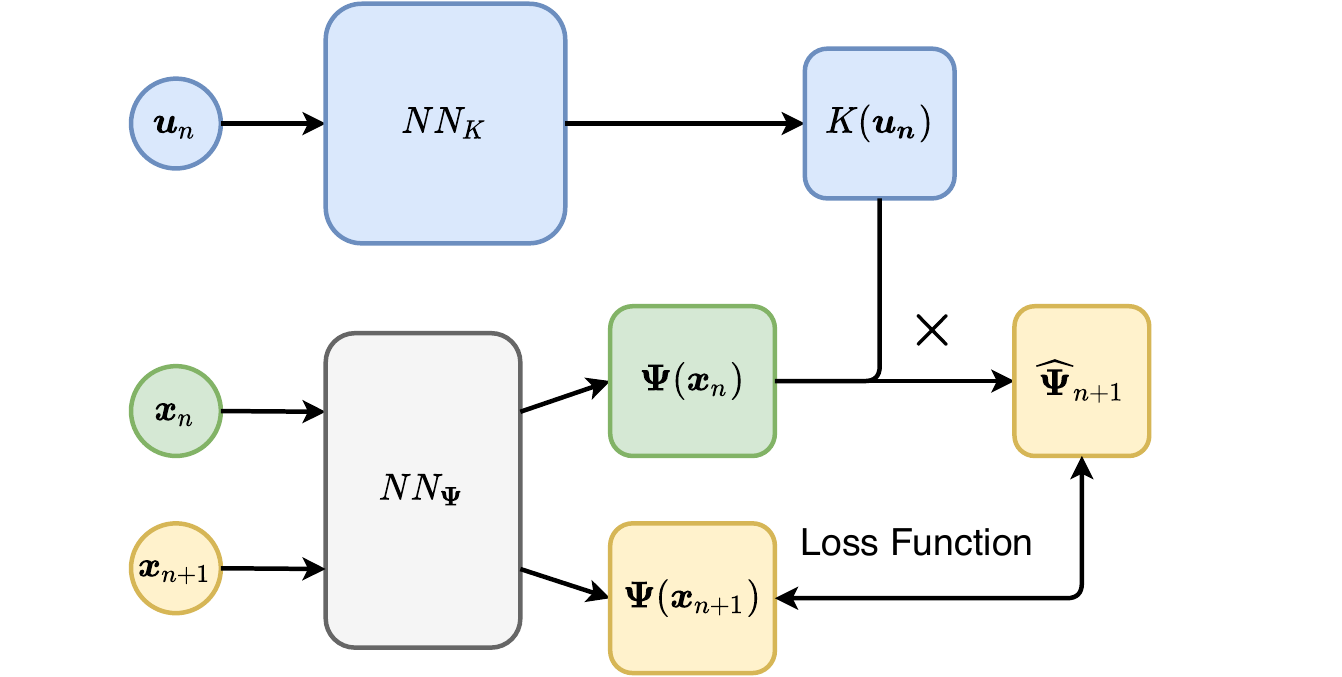}
	\caption{The neural network architecture of \pk.
    The trainable parameters of $K(\uu; \ttheta_{K})$ are integrated within the structure of $NN_{K}$,
    and a distinct neural network $NN_{\vpsi}$ is used to parametrize
    the dictionaries $\vpsi(\x; \ttheta_{\psi})$.}
	\label{fig:nn_structure}
\end{figure}

\begin{algorithm}[H]
\caption{Parametric Koopman decomposition with neural networks (\pk)}
\label{alg:PK-EDMD-DL}
    \begin{algorithmic}
    \STATE{Data: $\{\x_{n+1}^{(m)}, \x_{n}^{(m)}, \uu_{n}^{(m)}\}_{n,m=0}^{N-1, M-1}$}
    \STATE{Initialize: Random $(\ttheta_{K},\ttheta_\psi)$.}
    \STATE{
    Set learning rate $\delta>0$; tolerance $\varepsilon >0$; optimizer \texttt{Opt}.}
    \WHILE{$L(\ttheta_{K},\ttheta_\psi) > \varepsilon$}
    \STATE{Evaluate $L(\ttheta_{K},\ttheta_\psi)=\sum_{n, m=0}^{N-1, M-1}||\vpsi(\x_{n+1}^{(m)};\ttheta_{\psi})-K(\uu_{n}^{(m)}; \ttheta_{K})\vpsi(\x_{n}^{(m)};\ttheta_{\psi})||^2$;}
    \STATE{Update trainable parameters $(\ttheta_{K}, \ttheta_\psi)$ using \texttt{Opt} to minimize $L(\ttheta_{K},\ttheta_\psi)$;
    }
    \ENDWHILE
    \end{algorithmic}
\end{algorithm}

We close the presentation of our approach
with a discussion on related methodologies.
We begin with neural network architectural design.
For the dictionary network $\vpsi$,
we essentially follow the
approaches in~\cite{li2017extended, yeung2019learning}
by incorporating $\g$ into $\vpsi$ via linear transformations,
so that $\g$ can be recovered by a linear operation
(e.g. $\g = B \vpsi$).
When $\g$ is high-dimensional, one may alternatively employ
non-linear variants and recover $\g$ from $\vpsi$
with a trained decoder network~\cite{erichson2019physics, lusch2018deep, otto2019linearly, pan2020physics,
takeishi2017learning}.
These variants can be readily incorporated into our
parametric framework.
For the projected Koopman operator network $K$,
our method can be viewed as an adaptive version
of that introduced in~\cite{williams2016extending},
which writes $K(\uu)$ as a linear combination
$\sum_{i}^{N_{K}} K_i h_i(\uu)$,
with $h_i$ fixed and $K_i$ are fitted from data.
If we use a fully connected network to parametrize $K$,
then our method generalizes this approach
by also training $h_i$.
More generally, our method allows for alternative
architectures for $K$, beyond fixed or adaptive basis expansions.

Let us further contrast our approach with other existing
Koopman operator based approaches for non-autonomous systems.
The first class of methods posits a linear (or Bilinear)
system in the observable space~\cite{proctor2016dynamic, surana2016koopman, korda2018linear,korda2020optimal},
e.g. $\vpsi_{n+1} = A \vpsi_{n} + B \uu_{n}$.
These have the advantage of linearity,
but as we will show in~\cref{sec:results}
that the linearity (or bi-linearity) assumption
maybe too restrictive for some applications. These can be seen as a special case of our approach with $\mathcal{K}(\boldsymbol{u})$ affine in $\boldsymbol u$. A recent generalization of these methods~\cite{cibulka2022dictionary} considers a non-linear transformation of the parameter $\boldsymbol{u}$, i.e., systems of the form $\vpsi_{n+1} = A \vpsi_{n} + B \mathbf{\Phi}(\uu_{n})$.
Another class of methods regard a parametric dynamical
system as an autonomous one in the extended state space
$X \times U$, in the most general case, will
result in parameter-dependent invariant subspaces
(i.e. $\vpsi$ = $\vpsi(\x, \uu)$),
unless some further linearity assumptions are introduced
\cite{proctor2018generalizing}. This framework has been applied to control problems~\cite{shi2022deep}.
Alternatively, the recent work~\cite{haseli2023modeling}
posits a separable form for the dictionaries
$\vpsi(\x,\uu) = G(\uu)H(\x)$.
If such a form exists, then $H(\x)$ spans
a common invariant subspace.
In this sense, our method (\cref{prop:finite_param_koopman})
gives a different approach to constructing
a common invariant subspace, at least approximately.

\subsection{Parametric Koopman analysis for prediction and control}
\label{sec:application_analysis}
After \pk \ is trained, we can leverage it to solve prediction and control problems in the same family of parametric dynamics.

\paragraph{Prediction problems.}
We start with prediction problems.
In this scenario, an initial value
$\x_{0}$ and one choice of constant parameter $\uu$ or a sequence of parameters
$\{\uu_{n}\}$ are given.
The goal is to predict the sequence of observable values
$\{ \g(\x_{n}) \}$ generated by the underlying dynamics.
Recall that $\g$ is a given vector-valued observable function,
which by design lies in the span of $\vpsi$,
i.e. $\g = B \vpsi$ where $B$ is defined in~\cref{eq:bdef}.
The prediction algorithm is summarized in~\cref{alg:prediction}
for the time-varying parameters case,
and the constant parameter case simply replaces all $\uu_n$
with $\uu$.

\begin{algorithm}[H]
\caption{Prediction by \pk.}
    \label{alg:prediction}
    \begin{algorithmic}
    \STATE{Initial state: $\x_{0}$. Parameters: $\{\uu_{n}\}_{n=0}^{N-1}$.}
    \STATE{$\widehat{\vpsi}_0 =\vpsi\left(\x_{0}\right)$.}
    \FOR{$0 \leq n \leq N-1$}
    \STATE{
        $\widehat{\vpsi}_{n+1}=K(\uu_{n}) \widehat{\vpsi}_{n}$;\\
        $\hat{\y}_{n+1}=B \widehat{\vpsi}_{n+1}$;
    }
    \ENDFOR
    \STATE{Output:$\{\hat{\y}_{n}\}_{n=1}^{N}$.}
    \end{algorithmic}
\end{algorithm}

\paragraph{Optimal control problems.}
In the opposite direction, \pk \ enables us
to solve a variety of inverse-type problems,
where the goal is to perform inference
or optimization on the space of parameters $U$,
e.g. system identification.
In this work, we focus on the particularly challenging class
of such problems in the form of optimal control problems.
Concretely, we consider the discrete Bolza control problem
whose cost function depends on the observable values
\begin{align}
    \label{eq:bolza}
\begin{split}
    \min_{\{\uu_{n}\}_{n=0, 1, \dots, N-1}} \quad &
    J\left[\{\uu_{n}\}\right]
    = \Phi(\g(\x_{N})) + \sum_{n=1}^{N}L_{n} (\g(\x_{n}), \uu_{n-1}) \\
    \text{s.t.} \quad & \x_{n+1} = \f(\x_{n}, \uu_{n}).
\end{split}
\end{align}
Here the initial condition, the terminal cost $\Phi$ and the running costs $L_{n}$ are given,
but the dynamics $\f$ is unknown.
Note that in the fully observed case ($\g(\x)=\x$),
this is a standard Bolza problem.
After the construction of $\vpsi$ and $K$ from data, \pk \ transforms~\cref{eq:bolza} into
\begin{align}
    \label{opt_prob_psi_control}
    \begin{split}
        \min_{\{\uu_{n}\}_{n=0, 1, \dots, N-1}} \quad & J\left[\{\uu_{n}\}\right] =  \Phi(B\widehat{\vpsi}_{N}) + \sum_{n=1}^{N}L_{n} (B\widehat{\vpsi}_{n}, \uu_{n-1}) \\
        \text{s.t.} \quad & \widehat{\vpsi}_{n+1} = K(\uu_{n}) \widehat{\vpsi}_{n},\\
        & \widehat{\vpsi}_{0} = \vpsi(\x_{0}),
    \end{split}
\end{align}
which can be solved using standard optimization libraries.
A concrete example is the tracking problem~\cite{slotine1991applied,korda2018linear},
with the objective of achieving precise following of a desired reference
observable trajectory $\{\vr_{n}\}_{n=0}^{N}$.
In this case, $\Phi\equiv 0$ and $L_n(\y,\uu) = \| \y - \vr_{n} \|^2$.

It is of particular importance to discuss the case where
the control problem does not concern the full state $\x$,
but only some pre-defined observables of it.
A simple example is a tracking problem that only requires
tracking the first coordinate of the state.
In this case, the dictionaries $\vpsi$ are only
required to include the map $\g(\x) = x_1$.
Thus, one can interpret our approach as a
Koopman-operator-assisted model reduction method
for data-driven control.

In the literature,
there exist control approaches which limit to a finite number of control choices,
such as those described in~\cite{peitz2019koopman}
and~\cite{Banks2023}.
These transform the dynamic control problem
into a set of autonomous representations and time-switching optimization problems.
In particular,~\cite{Banks2023} leverages input-parameterized Koopman eigenfunctions to handle systems
with finite control options, embedding the control within the Koopman eigenfunctions to solve the
control problems for the system with fixed points.
In contrast, \pk \ allows for control values to be chosen arbitrarily within a predefined range without being restricted to a set of finite, predetermined options. This feature gives our approach greater flexibility in tackling various control problems.

Compared to existing data-driven approaches,
there are some distinct advantages to transforming
\cref{eq:bolza} into~\cref{opt_prob_psi_control}.
First, compared with methods that learn linear
or bilinear control systems in the observable space~\cite{proctor2016dynamic, surana2016koopman},
the transformed problem~\cref{opt_prob_psi_control}
is valid for general dynamics, where
$\f$ may be strongly non-linear in both the state
and the control.
In these cases, we observe more significant improvements
using our method (See~\cref{sec:control}).
Second, we contrast with the alternative approach of
first learning $\f$ in~\cref{eq:bolza}
using existing non-linear system identification techniques~\cite{chiuso2019system, ljung2020deep, tanyu2022deep},
and then applying optimal control algorithms directly to~\cref{eq:bolza}.
While this approach is feasible for low-dimensional systems,
it becomes inefficient when dealing with high-dimensional
non-linear systems, where the control objectives only
depend on a few low-dimensional observables.
Our method can avoid the expensive
identification of high-dimensional dynamics and
the subsequent even more expensive optimal control problem
in high dimensions.
Instead, we parsimoniously construct a low-dimensional control system
in the form of~\cref{opt_prob_psi_control},
on which the reduced control problem can be easily solved
(See tracking the mass and momentum of Korteweg-De Vries (KdV) equation in~\cref{sec:control}).

\subsection{\pk \ and non-linear controllability}
\label{sec:controllability}

We conclude with a discussion on the interesting subject of controllability.
For control problems, and in particular tracking problems,
it is desirable that the constructed data-driven
state dynamics should be controllable, i.e. the initial state
can be driven to an arbitrary target state, at least locally.
This gives the system flexibility to track a given trajectory.
The
controllability rank condition in the lifting space of the Koopman operator
analysis with a linear dependence on the control is addressed in~\cite{brunton2021modern}.
Nevertheless, such linear models (as we show in~\cref{sec:results})
have limited modelling capabilities in the general non-linear setting,
prompting us to consider the
controllability of \pk \ type Koopman dynamics.
In particular, we demonstrate that the dynamics~\cref{opt_prob_psi_control}
has the special property
that controllability conditions are readily checked,
and may even be algorithmically enforced.

We discuss this in the context where the dynamics
$\x_{n+1} = \f(\x_{n}, \uu_{n})$ is a discretization
of a differential equation
$\dot{\x} = \tilde{\f}(\x,\uu)$ with initial condition $\x(0) = \x_0$. We assume $\tilde{\f}(\x,\uu)$ is continuous in $\uu$ and Lipschitz continuous in $\x$ across the entire domain. This assumption guarantees the existence of a unique solution of $\dot{\x} = \tilde{\f}(\x,\uu)$ by the Picard--Lindelöf theorem and inherently ensures forward completeness \cite{angeli1999forward, gonzalez2021anti}.
Thus, the discrete system $\x_{n+1} = \f(\x_{n}, \uu_{n})$ effectively approximates the continuous system by defining $\f(\x_{n}, \uu_{n})$ as the solution to $\dot{\x} = \tilde{\f}(\x,\uu)$ with $\x(0) = \x_n$ and with $\uu(t) = \uu_n$ being constant over a given sampling interval $\Delta t$.
If the continuous system is controllable, the controllability of the discrete system can be approximately guaranteed within any arbitrary error. 
A classical sufficient condition for controllability is a
corollary of the Chow-Rashevsky theorem~\cite{chow1940systeme, rashevsky1938connecting}:
if the family $\Fcal = \{\tilde{\f}(\cdot, \uu): \uu \in U\}$ is smooth and symmetric
(meaning that $-\tilde{\f} \in \Fcal$ if $\tilde{\f} \in \Fcal$),
then the system is controllable with piecewise constant control
if $\text{dim Lie}_{\x}\Fcal = d$ for each $\x$, where $\text{Lie}_{\x} \Fcal$ is the span of the Lie algebra
generated by $\Fcal$ at $\x$. While useful, this condition is difficult to check
for a given $\tilde{\f}$, as it requires the generation of
Lie algebras at \emph{every} point in the state space,
unless each member of $\Fcal$ is linear.
This is the fundamental difficulty towards ensure, or even verifying
the controllability of non-linear systems.

However, it turns out that controllability is much easier to
check for the transformed system under parametric Koopman operator analysis. Given observable $\phi \in C_b$, for each $\uu$, the Koopman operator is well-defined and can be considered as strongly continuous semigroups $(\mathcal{S}^{t}(\uu))_{t \geq 0}$ with $\mathcal{S}^{t}(\uu) \phi(\x_{0}) = \phi (\x(t))$, $\mathcal{S}^{0}(\uu)\phi(\x) = \phi(\x)$ \cite{hille1996functional, mauroy2020koopman}. There exists a dense subset $D$ (such as $C^{\infty}$) of $C_b(X)$ such that $\dot{\phi} = \lim_{t \to 0^{+}} \frac{\mathcal{S}^{t}(\uu) \phi - \phi}{t} \triangleq \tilde{\Kcal}(\uu)\phi$ exists in the strong sense for $\phi \in D$. This defines the Koopman generator $\tilde{\Kcal}(\uu): D \to C_b(X)$, which is the time derivative over the evolution of the observables. We assume that $\phi$ is approximately in a finite-dimensional subspace, and the components in the dictionary $\vpsi$ span this subspace.
The continuous analogue to \eqref{opt_prob_psi_control} is thus
$\dot{\vpsi} = \tilde{K}(\uu)\vpsi$ where $\tilde{K}(\uu)$ is the projected Koopman generator onto the dictionary $\vpsi$. 
Concretely, we may compute the corresponding projected
Koopman generator
for the continuous system as $\tilde{K}(\uu) = \frac{K(\uu)-I}{\Delta t}$ where $I$ is
an identity matrix.
Furthermore, we can extend this to a finite number of observables $\phi_i $'s which can be represented in this subspace. Controllability in the continuous system $\dot{\vpsi} = \tilde{K}(\uu)\vpsi$ ensures these observables $\phi_i $'s are also controllable.
Then, the control family $\Fcal = \{\tilde{K}(\uu)(\cdot): \uu \in U\}$
is now a family of \emph{linear} functions.
Therefore, the controllability conditions can be checked at one (any)
state, and automatically carries over to all states.
We note that this notion of controllability is not over
the original state space,
but over the (approximately) invariant subspace spanned
by $\vpsi$.
In this sense, we can understand the parametric Koopman approach
as one that
constructs an invariant subspace of observables
with the property that \emph{controllability in this subspace at any point carries over to
the whole subspace}
-- a desirable property of linear control systems.

One way to verify controllability is as follows.
Let $\text{Lie}_{\x}^{(0)} \Fcal \triangleq \text{span}\{\tilde{K}(\uu)\x: \uu \in U\}$.
We know that $\text{Lie}_{\x}^{(0)} \Fcal \subseteq \text{Lie}_{\x}\Fcal$.
Then, $\text{dim Lie}_{\x}\Fcal = d$ can be guaranteed by $\text{dim
Lie}_{\x}^{(0)} \Fcal=d$, which is equivalent to:
for any non-zero vectors
$\vv_{1}, \vv_{2} \in \R^{d}$, there exists $\uu_{j} \in U$ such that
$\vv_{1}^{T} \tilde{K}(\uu_{j}) \vv_{2} \neq 0$.
This can be ensured if the flattened matrices
$\{\tilde K(\uu) : \uu \in U\}$ do not lie in a proper subspace.
Algorithmically, we collect a large sample of parameters
$\uu_i$, $i=1,2,\dots,N$, flatten each matrix
$\tilde{K}(\uu_{i}) \in \R^{d \times d}$ into a vector $\vk_{i} \in \R^{d^2}$
and concatenate all $\vk_{i}^{'}$s as a $d^2\times N$ matrix
\begin{align}
    \label{eq:matrix_C}
    C = [\vk_{1}, \vk_{2}
    \dots, \vk_{N}].
\end{align}
The system is controllable if the rank of $C = d^{2}$.
We show in~\cref{sec:control}
that this can be readily checked for a learned \pk \ system,
and moreover, that it may guide architecture designs for $\uu \mapsto K(\uu)$
that promote controllability,
yielding an approach that
finds a parsimonious controllable dynamics in the observable
space.
It should be noted that although this rank condition is sufficient to guarantee the controllability of the observables, it is not necessary when the observables $\vpsi$ form a manifold. 
Investigating the model structure design and the controllability of the manifold presents an interesting future research direction.

\section{Numerical experiments}
\label{sec:results}

In this section, we present numerical results on a variety of prediction and control problems.
We evaluate various baseline methods,
noting that some of these overlap in different aspects of the comparison.
For convenience, we introduce a standard format to refer to these methods in
\cref{tab:names_of_models}.
The code to reproduce these experiments are found at~\cite{guo2023learningcode}.

\begin{table}[htb!]
    \centering
    \caption{Summary of comparisons. ``NN'' stand for (residual) neural networks
    and ``RBF'' refers to dictionaries built from random radial basis functions. $u_{n,i}$ is the $i^{\text{th}}$ component of $\uu_n$.}
    \label{tab:names_of_models}
    \begin{tabular}{l l c l}
      \toprule
      \toprule
       Method Description & $\vpsi$& Reference & Notation \\
      \midrule
      $\x_{n+1} = K \x_{n}$ (Optimized DMD)
              & $\x$
              &\cite{heas2022low}
              & M0 \\
      \midrule
      \multirow{2}{*}{$\vpsi(\x_{n+1}) = K \vpsi(\x_{n})$(EDMD)}
              & RBF
              &\cite{williams2015data}
              & M1-RBF \\
        & NN
        &\cite{li2017extended}
        & M1-NN \\
        \midrule
       $\vpsi(\x_{n+1})= A \vpsi(\x_{n}) + B \uu_{n}$
              & NN
              &\cite{proctor2016dynamic}
              & M2 \\
        \midrule
       $\vpsi(\x_{n+1})= A \vpsi(\x_{n}) + \sum_{i} B_{i} u_{n,i}\vpsi(\x_{n})$
              & NN
              &\cite{surana2016koopman}
              & M3 \\
              \midrule
      \multirow{2}{*}{}
      $\vpsi(\x_{n+1}) = \sum_{i=1}^{M} h_{i}(\uu_{n})K_{i} \vpsi(\x_{n})$ & RBF & \cite{williams2016extending} & M4-RBF\\
      $M$ is fixed and $h_{i}$ are fixed polynomials  & NN &  & M4-NN \\
      \midrule
       $\vpsi(\x_{n+1}) = K(\uu_n)\vpsi(\x_{n})$ & NN  &  & Ours \\
      \bottomrule
      \bottomrule
    \end{tabular}
  \end{table}

We first demonstrate on prediction problems that \pk \ outperforms
existing methods, including optimized DMD (M0), classical EDMD (M1-RBF) and EDMD with dictionary
learning (M1-NN) in~\cref{sec:K-EDMD-DL}, linear (M2) and bi-linear (M3) extensions
to systems with parameters or control in~\cref{sec:nonlinearity} and a modified
form of EDMD (M4) in~\cref{sec:high_dim}.
In the reverse direction, we show that \pk \
can solve data-driven optimal control problems in~\cref{sec:control}.
Notably, our approach exhibits more significant improvements
for problems with strong non-linearity or those involving high-dimensional states and parameters.
Although the structure of the dictionary is detailed in the following paragraphs, here we highlight the dictionary is designed as $(1, \x, NN)^{T}$ by default.
This consists of a constant element, the state $\x$ and trainable components $NN$ generated by neural networks, the same as the dictionary in~\cite{li2017extended}. Furthermore, we construct $K(\uu)$ by assigning its first row to $(1, 0, \dots, 0)$, which constitutes one of the optimal choices for the first row in $K(\uu)$. We also provide a detailed discussion about selecting hyperparameters in \pk \ for different tasks in~\cref{sec:hyperparam_selection}.

\subsection{Prediction problems}
\label{sec:predction}
Using~\cref{alg:prediction},
we test the performance of \pk \ on
various forward prediction problems
involving discretized parametric ordinary and partial
differential equations.
Here, the discrete-time step $n$ corresponds to the physical time
$t_n$ of the continuous dynamics, and are equally spaced.
We evaluate performance by
the relative reconstruction error at $t_{n}$
on the $m^\text{th}$ trajectory is defined as
\begin{align}
\label{eq:rel_error}
E^{(m)}\left(t_n\right)=\frac{\sqrt{\sum_{i=1}^n\left\|\hat{\y}_i^{(m)}-\y_{i}^{(m)}\right\|^{2}}}{\sqrt{\sum_{i=1}^n\left\|\y_{i}^{(m)}\right\|^{2}}},
\end{align}
where $\y_{i}^{(m)}$ is the true value of observables and $\hat{\y}_{i}^{(m)}$
is the predicted observables at $t_{i}$ on the $m^\text{th}$ trajectory. To
evaluate the performance on $M$ trajectories, we use the average $E(t_{n}) =
\frac{1}{M}\sum_{m=1}^{M}E^{(m)}\left(t_n\right)$. 
If there is no special
emphasis on the observation function, we set $\g(\x) = \x$, the full-state observable.
With the state as the output at each step, we substitute the predicted state into the dictionary and multiply the dictionary by $K(\uu)$ to improve the stability of long-term predictions.

\subsubsection{Improved accuracy and generalization over parameter independent Koopman dynamics}
\label{sec:K-EDMD-DL}

We first demonstrate that \pk \ can interpolate successfully
in parameter space and improve upon naive training
a separate (parameter-independent) Koopman dynamics for each observed $\uu$ using EDMD with dictionary learning (M1-NN)~\cite{li2017extended}, EDMD with radial basis functions (M1-RBF)~\cite{williams2015data} and optimized DMD (M0)~\cite{heas2022low}.
We consider the parametric Duffing equation
\begin{align}
    \label{eq:duffing_param}
    \begin{split}
    \dot{x_1}&=x_2, \\
    \dot{x_2}&=-\delta x_2-x_1\left(\beta+\alpha x_1^2\right).
    \end{split}
\end{align}
To quantify performance,
we use $\nu_{1}$ to denote the number of trajectories
for each set of fixed parameters $\uu = (\delta, \alpha, \beta)$
and $\nu_{2}$ to denote the number of different
parameter configurations in the dataset.
The baseline method trains $\nu_2$ separate Koopman dynamics and chooses one such dynamic with parameter value being the nearest neighbour of the testing parameter for testing.
On the other hand, \pk \ trains only one parametric Koopman dynamics
that can interpolate over the space of the parameters $\uu$
and directly uses~\cref{alg:prediction} to predict.
For each configuration of $\nu_{1}$ and $\nu_{2}$, training data is generated with trajectories initialized within the domain $[-2,2]\times[-2,2]$. The parameters $\delta$, $\alpha$ and $\beta$ are randomly sampled from uniform distributions over the intervals $[0,1]$, $[0,2]$ and $[-2,2]$ respectively.
To ensure
fairness, the total amount of training data $\nu_1\times\nu_2 = 10000$
remains constant.
For each trajectory, we observe the data
points over 50 steps with $\Delta t=0.25$, the same setting as the experiments
in~\cite{li2017extended}.
The dictionary $NN_{\psi}$ is constructed as $(1, x_1, x_2, NN^T)^T$ where $NN$ is a trainable 22-dimensional vector. This $NN$ is the output of a 3-layer feed-forward ResNet with a width of 100 nodes in each layer. The dictionary by this design is employed in both \pk \ and M1-NN. For $NN_{K}$ in \pk, we use a 3-layer fully connected neural network with width 256 before the output layer.

Our results, as shown in~\cref{fig:duffing_param}, indicate that
our method increasingly outperforms the other three approaches as $\nu_2$ increases.
This is because, while we have few representative trajectories
per parameter instance (causing M1-NN and M1-RBF to fail),
the interpolation in parameter space
allows us to integrate information across
trajectories with different parameter values,
thus ensuring prediction fidelity and generalizability.

\begin{figure}[htb!]
	\centering
	\includegraphics[width=1.0\textwidth]{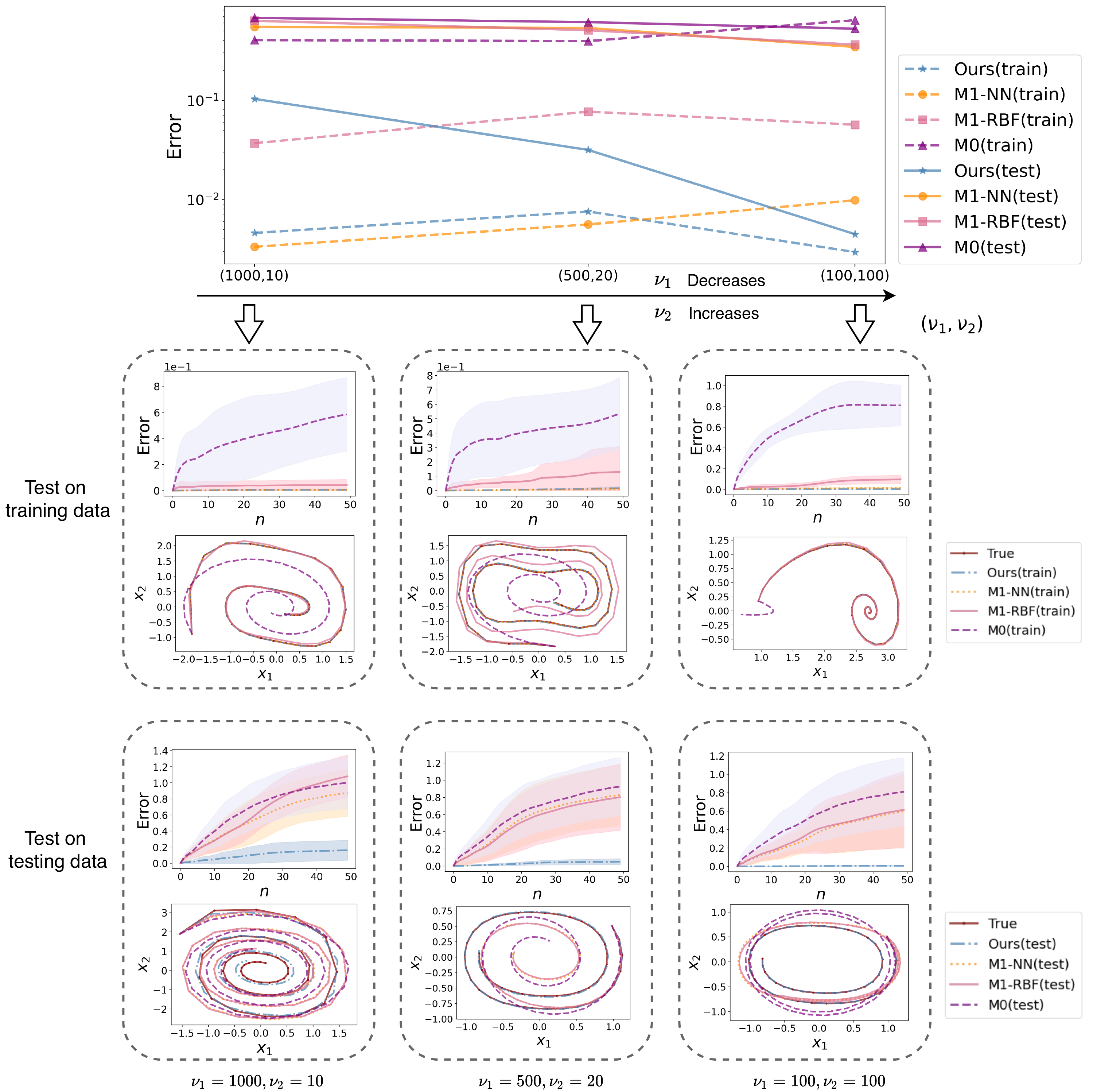}
	\caption{Comparison on different $(\nu_{1}, \nu_{2})$ with $\nu_{1}\nu_{2} =
	10000$. The topmost panel displays the prediction results for \pk \ and M1 on
	both training and testing data. The horizontal axis represents the values of
	$(\nu_{1}, \nu_{2})$.
    In the bottom panels, we provide a detailed error
	analysis at varying time steps across various $(\nu_{1}, \nu_{2})$
	configurations.
    We observe that
    \pk \ out-performs M1-NN, M1-RBF and M0 significantly when $\nu_{2}$ increases,
    which results in fewer trajectories per parameter configuration.}
	\label{fig:duffing_param}
\end{figure}

\subsubsection{Improved performance on strongly non-linear problems}
\label{sec:nonlinearity}

To see the performance of \pk \ on the problems with strong non-linearity,
we test our method on the Van der Pol Mathieu equation~\cite{jianliang2021nonlinear}
\begin{align}
    \label{eq:vdpm}
    \begin{split}
        \dot{x_1}&=x_2,\\
        \dot{x_2}&=(k_{1} - k_{2} x_{1}^{2})x_{2} - (w_{0}^{2} + 2\mu u^{2}- \mu)x_{1} + u k_{3},
    \end{split}
\end{align}
where $k_{1}=2, k_{2}=2, k_{3}=1$ and $w_{0}=1$. The parameter $u$ is designed to adjust both the parametric excitation $(2 \mu u^{2})x_{1}$ and the external excitation $u k_{3}$.
The value of the parameter $\mu >0$ captures the non-linearity among $\x=(x_{1},
x_{2})$ and $u$,
and this non-linearity becomes more pronounced as the parameter
$\mu$ increases.
Training data consists of 500 trajectories with $\Delta t = 0.01$. Each
trajectory spans 50 sampling time steps initialized from a uniformly randomly sampled point in $[-1,1]\times[-1,1]$.
The parameter at each time
step are also randomly sampled from uniform distributions over $[-1,1]$.
In this section, we consider the case where
$\nu_1=1$ as in~\cref{sec:K-EDMD-DL},
thus we do not compare with EDMD using RBF (M1-RBF) or NN (M1-NN), but instead
with the linear Koopman with control approach (M2)~\cite{proctor2016dynamic}
\begin{align}
    \label{koopman_linear}
    \widehat{\vpsi}_{n+1}= A \widehat{\vpsi}_n + B \uu_{n},
\end{align}
and the bi-linear variant (M3)~\cite{surana2016koopman}
\begin{align}
    \label{koopman_bilinear}
    \widehat{\vpsi}_{n+1}= A \widehat{\vpsi}_n + \sum_{i}^{N_{u}} B_{i} u_{n,i} \widehat{\vpsi}_n,
\end{align}
where $u_{n,i}$'s are the components of vector $\uu_{n} \in \R^{N_{u}}$
and $A, B_{i}$'s $\in \R^{N_{\psi}\times N_{\psi}}$.
In experiments, to ensure the effect of dictionaries on state $\x$ is the same,
we use trainable dictionaries, which are built with the same network structure and
the same dictionary number.
The dictionary $NN_{\psi}$ is $(1, x_1, x_2, NN^T)^T$ and the dimension of the trainable vector $NN$ is 10. $NN$ is designed by two hidden layers with width 64 and $NN_{K}$ employs one hidden layer with width 128.
In the experiments,
we randomly initialize the dictionary and use the least squares method to get
the initial $A$, $B$ or $b_{i}$.
To obtain the final results for linear and bi-linear models,
we train the dictionaries and optimize $A$, $B$ or $b_{i}$ iteratively.
For \pk, trainable weights are not only contained in dictionaries but also
in the projected Koopman operator. We train them together to find an approximately optimal model.
As shown in~\cref{fig:vdpm_comparison}, we show that \pk \ does not perform as
well as linear and bi-linear model when $\mu=0$ but improves upon them when
$\mu=1, 2, 3, 4$.
This is expected since the system is closest to a non-linear/bi-linear system when $\mu=0$,
whereas it departs more significantly from a linear structure as $\mu$ increases. 
In~\cref{sec:vdpm_mu0}, we explain that \pk \ can still perform well when $\mu=0$ using a perturbed initialization from a set of precise weights.

\begin{figure}[htb!]
	\centering
	\includegraphics[width=1.0\textwidth]{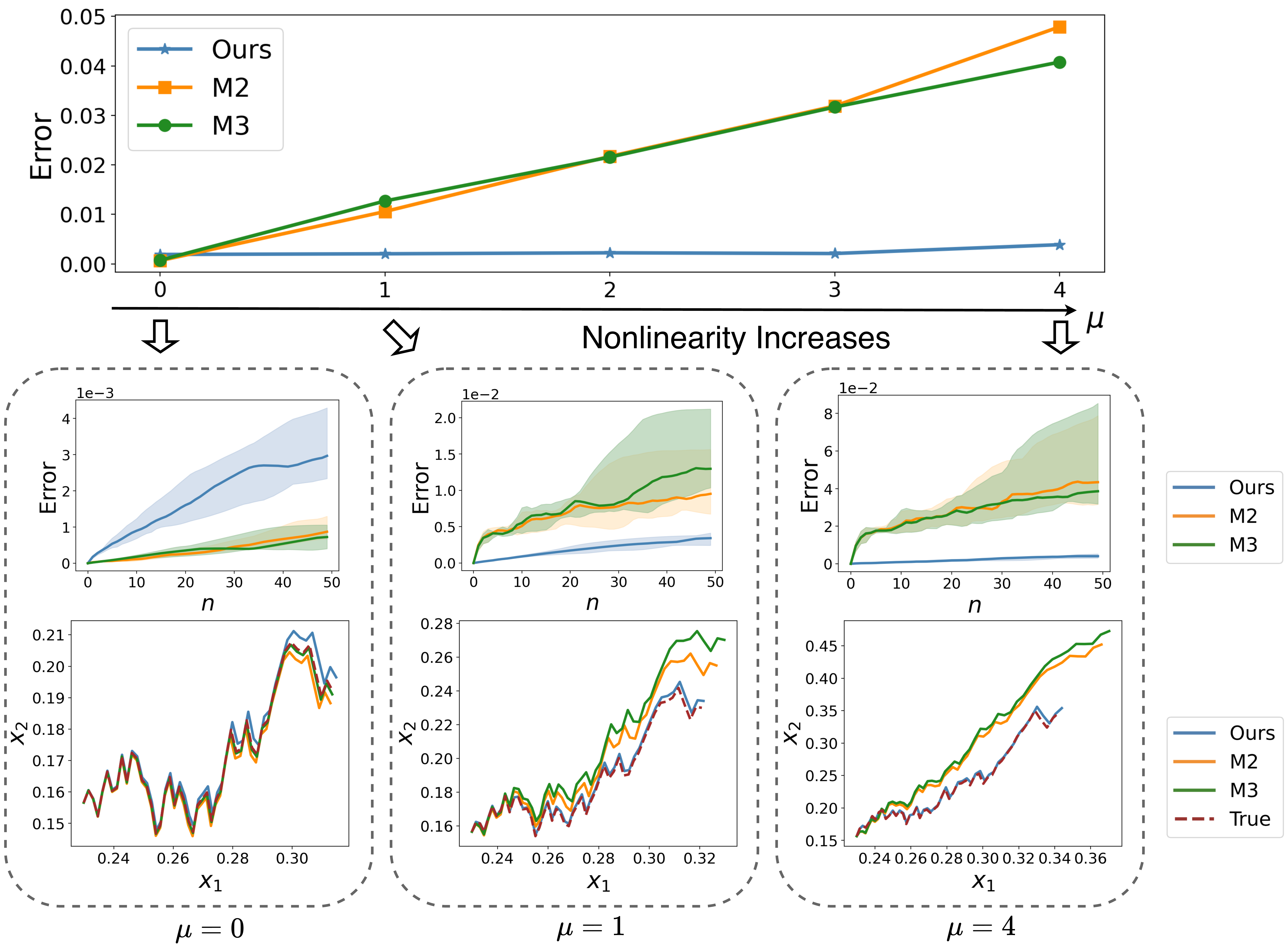}
	\caption{A comparison of errors that occur as the non-linearity of variables
	$\uu$ and $\x$ in the problem increases, with the variable $\mu$ acting the non-linearity coefficient.
    The topmost figure showcases the error analysis for \pk, linear model, and bi-linear model as the non-linearity
	coefficient $\mu$ is incremented. The bottom blocks illustrate a contrast
	between predicted trajectories of different methods and the exact solution,
	along with an error analysis at each time step for the corresponding $\mu$
	configuration.
    \pk \ outperforms the other two structures when the non-linearity of the system increases.}
	\label{fig:vdpm_comparison}
\end{figure}

\subsubsection{Improved accuracy for high-dimensional systems}
\label{sec:high_dim}
We now apply \pk \ to the high-dimensional
problem involving a variation of the FitzHugh-Nagumo partial differential equation subject to quasi-periodic forcing, which is mentioned in~\cite{williams2016extending}
\begin{align}
    \label{eq:fhn_pde}
    \begin{split}
    \partial_t v & =\partial_{x x} v+v-v^3-w+u(t) ( e^{-(x-k_{1})^2 / 2} + e^{-(x-k_{2})^2 / 2} + e^{-(x-k_{3})^2 / 2}), \\
    \partial_t w & =\delta \partial_{x x} w+\epsilon\left(v-a_1 w-a_0\right).
    \end{split}
\end{align}
where $\delta=4, \epsilon=0.03, a_1=2, a_0=-0.03, k_{1}=-5, k_{2}=0, k_{3}=5$ on
the domain $x \in (-10,10)$ with Neumann boundary conditions.
We adopt a finite difference method to discretize the spatial derivatives
on a regular mesh consisting of 10 points.
In this study, the state vector comprises two components: a 10-dimensional discretized activator complex $\boldsymbol{v}$ and a 10-dimensional discretized inhibitor complex $\boldsymbol{w}$, resulting in a 20-dimensional state vector. The initial condition for the activator complex $\boldsymbol{v}$ is given by $\sin(\frac{a \pi x}{10}+\frac{\pi}{2})$ where $a$ is an integer randomly sampled from the range $(1,20)$. The inhibitor complex $\boldsymbol{w}$ is initialized to zero at all spatial points. The parameters $u \in \R$ along the trajectory are uniformly randomly generated in $(-1,1)$.
Training data are generated on 100 trajectories over 500 sampling time-steps and
$\Delta t = 0.001$.
We compare \pk \ with the method introduced in~\cite{williams2016extending},
which uses fixed radial basis functions as the dictionaries on states and
approximates the parametric Koopman operator as
\begin{align}
\label{eq:polyK}
    K(\uu) = \sum_{i=1}^{N_{K}} h_{i}(\uu)K_{i}.
\end{align}
In this experiment, the parameter-dependent functions $h_{i}(\uu)$ are
approximated by polynomials up to third order.
We refer to this method by ``M4-RBF'' in the following figures.
Furthermore, to observe the influence of the dictionaries on states, we
introduce a modification called ``M4-NN'', where
the dictionaries are trainable,
but the structure of $K(\uu)$ is the same as~\cref{eq:polyK}.
In this experiment, the dictionary is designed as $(1, \boldsymbol{v}^T, \boldsymbol{w}^T, NN^T)^T$ where $\boldsymbol{v},\boldsymbol{w} \in \R^{10}$ and $NN$ is 10-dimensional trainable vector. $NN$ utilizes two hidden layers with a width of 128, whereas $NN_{K}$ employs one hidden layer with a width of 16.

In~\cref{fig:fhn_error}(a), we show that trainable dictionaries
lead to better prediction results than only using RBF,
suggesting that adaptive dictionaries are effective
for high-dimensional state spaces.
Additionally, the performance of \pk \ aligns similarly to M4-NN. The reason may be the parameter $\uu$ in Equation~\cref{eq:fhn_pde} is one-dimensional, making the polynomial-based approximation of $K(\uu)$ sufficiently effective. To verify this hypothesis, we modify~\cref{eq:fhn_pde} to
\begin{align}
    \label{eq:fhn_pde_high_u}
    \begin{split}
    \partial_t v & =\partial_{x x} v+v-v^3-w+u_{1}(t) e^{-(x-k_{1})^2 / 2} + u_{2}(t)e^{-(x-k_{2})^2 / 2} + u_{3}(t) e^{-(x-k_{3})^2 / 2}, \\
    \partial_t w & =\delta \partial_{x x} w+\epsilon\left(v-a_1 w-a_0\right).
    \end{split}
\end{align}
with three parameters $\uu = (u_{1}, u_{2}, u_{3}) \in \R^{3}$,
while other settings remain unchanged.
The number of trainable weights in $K$
are kept approximately the same in both methods,
whereas the dimensions of the state dictionary $\vpsi$ are identical. The dictionary retains a structure of $(1, \boldsymbol{v}^T, \boldsymbol{w}^T, NN^T)^T$. $NN$ utilizes two hidden layers with a width of 128 and $NN_{K}$ has two hidden layers with width 16.
In~\cref{fig:fhn_error}(b),
we observe that \pk \ now performs better 
than the two baselines, consistent with our hypothesis.
Expanding our investigation to encompass systems with higher dimensionality,~\cref{fig:fhn_error}(c) shows an improvement in the performance of \pk \ as the spatial discretization range is broadened from 10 to 100 for~\cref{eq:fhn_pde} and \pk \ also outperforms other two methods. The results of the prediction on the original space, with $N_x = 10$ and dim $\uu = 1$, are presented in~\cref{fig:fhn_dim_10_traj_diff}. We can see that the predictions by \pk \ are more accurate than the other methods, M4-RBF and M4-NN. The results on $N_x =10$, dim $\uu=3$ and $N_x =100$, dim $\uu=1$ are shown in~\cref{sec:fhn_traj}. 
This demonstrates the effectiveness of our approach in handling high-dimensional systems.

\begin{figure}[htb!]
    \centering
    \includegraphics[width=1.0\textwidth]{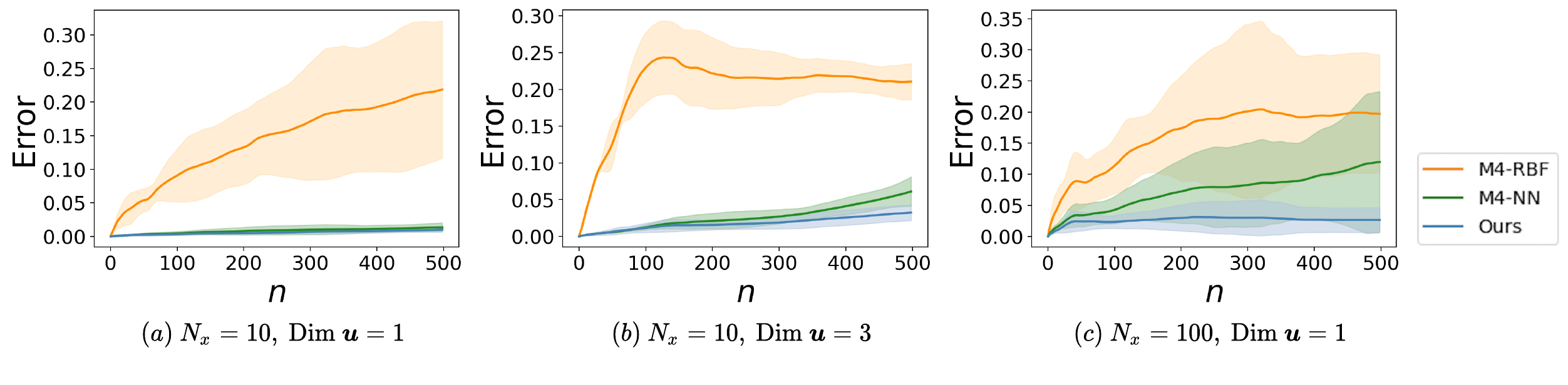}
    
        \caption{Comparison of prediction errors computed from the three approaches on the FitzHugh-Nagumo PDE discretized on space $\x$, for varying dimensions of the parameter $\uu$.
        We observe that as parameter dimension increases,
        \pk \ out-performs polynomial-based approximation schemes.}
        \label{fig:fhn_error}
\end{figure}

\begin{figure}[htb!]
    \centering
    \begin{subfigure}{1.0\textwidth}
    \centering
    \includegraphics[width=1.0\textwidth]{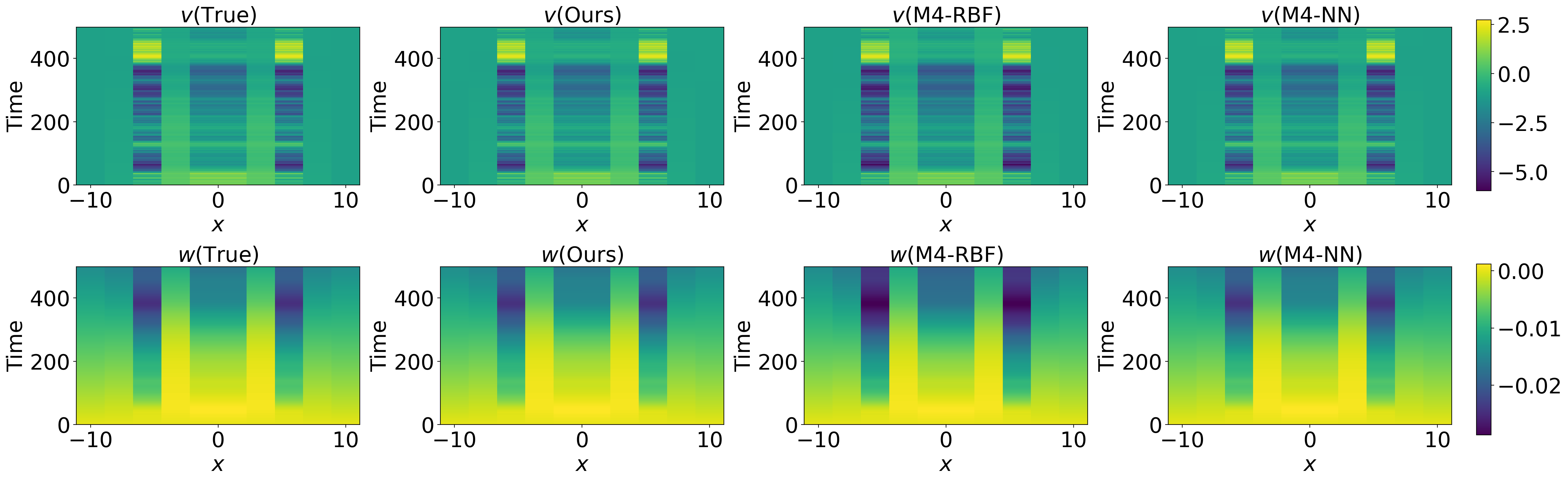}
        \caption{Trajectory.}
        \label{fig:dim_10_fhn_traj}
    \end{subfigure}
    \centering
    \begin{subfigure}{1.0\textwidth}
    \centering
    \includegraphics[width=1.0\textwidth]{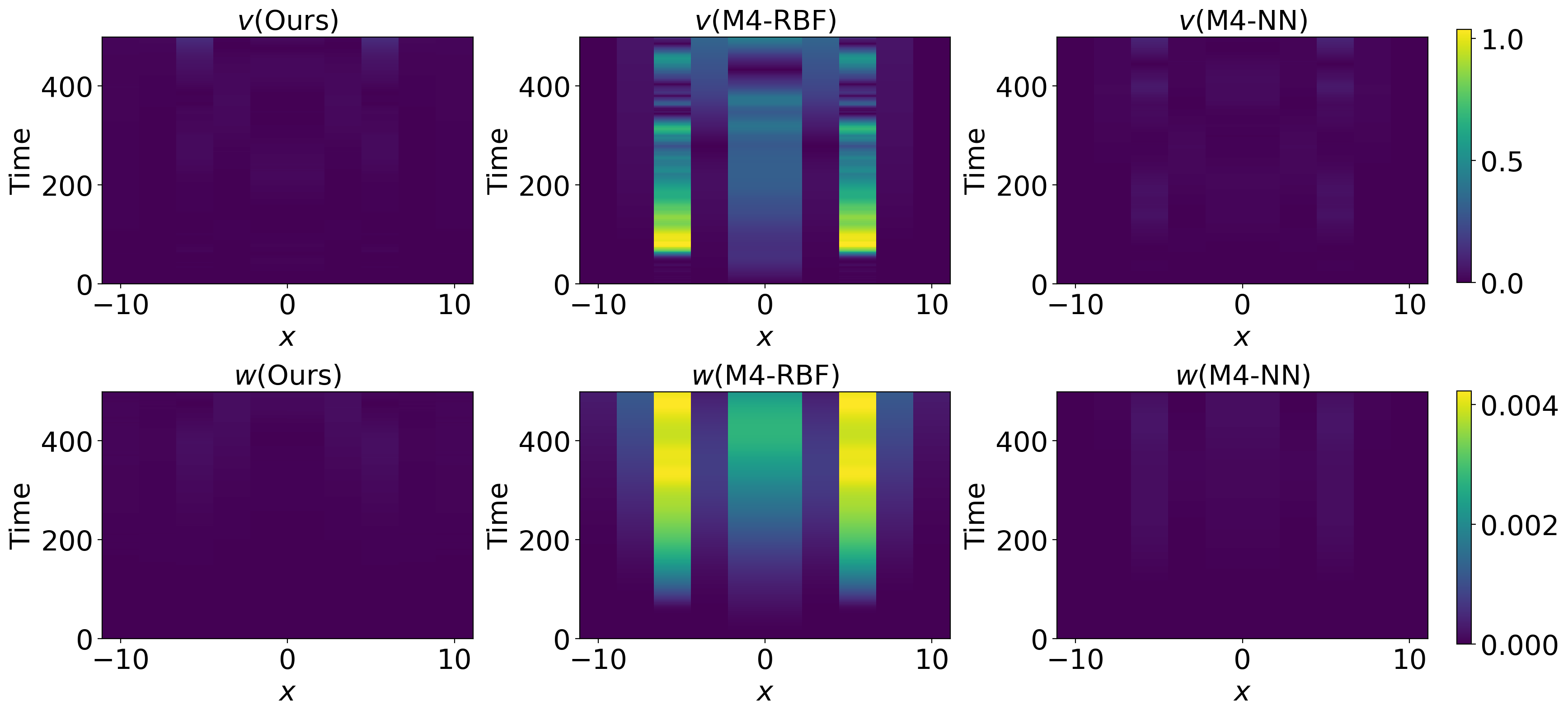}
        \caption{Difference.}
        \label{fig:dim_10_fhn_diff}
    \end{subfigure}
    \caption{(a): Prediction by PK-NN, M4-RBF and M4-NN on FHN system with $N_{\psi}=10$ and dim $\uu = 1$. (b): The absolute value of the difference between the forecasted outcomes and the ground truth.}
    \label{fig:fhn_dim_10_traj_diff}
\end{figure}

\subsection{Optimal control problems}
\label{sec:control}
As a final class of applications,
we show that \pk \ can be used to solve optimal control problems
from observational data without knowledge of the precise form of the dynamics that drive the control system.
Furthermore, it can deal with strongly non-linear control systems,
both in the state and the control.

We first apply our method to control
the forced Korteweg-De Vries (KdV) equation
(\cite{miura1976korteweg})
\begin{align}
\label{eq:kdv}
    \frac{\partial \eta(t,x)}{\partial t} + \eta(t,x) \frac{\partial \eta(t,x)}{\partial x} + \frac{\partial^{3} \eta(t,x)}{\partial x^{3}} = w(t,x),
\end{align}
where $w(t,x) = \sum_{i=1}^{3}v_{i}(x)\sin(\pi u_{i}(t))$ is the forcing term.
Control parameters at $t_{n}$ are $\uu_{n} = (u_{1,n}, u_{2,n},
u_{3,n})^{T} \in [-1,1]^{3}$.
The functions $v_{i}(x)
= e^{-25(x-c_{i})^{2}}$ are fixed spatial profiles with $c_{1} =
-\frac{\pi}{2}$, $c_{2}=0$ and $c_{3} = \frac{\pi}{2}$
\cite{korda2018linear}.
We consider periodic boundary conditions on the spatial variable $x \in [-\pi,
\pi]$ and we discretize with a spatial mesh of 128 points, and the time step is
$\Delta t = 0.01$. This induces a state-space $\boldsymbol{\eta}_{n} =
(\eta_{1}(t_{n}), \eta_{2}(t_{n}), \dots, \eta_{128}(t_{n}))^{T}$ where
$\eta_{i}(t_{n})$ is the value of $\eta(t_{n},x_{i})$ at time $t_{n}$ and
$x_{i}$ is the $i^{\text{th}}$ spatial grid point.

We consider a tracking problem involving one of the following two observables:
the ``mass'' $\int_{X} \eta(t,x) dx$ and the ``momentum'' $\int_{X}
\eta^{2}(t,x) dx$.
Given a reference trajectory (of either mass or momentum) $\{r_n\}$,
the tracking problem refers to a Bolza problem~\eqref{opt_prob_psi_control}
with $\Phi\equiv 0$ and
$L_n(m, \uu) = | m - r_{n} |^2$.
Training data are generated from 1000 trajectories of length 200 samples.
The initial conditions are a convex combination of three fixed spatial profiles
and written as $\eta(0, x) = b_{1} e^{-(x-\frac{\pi}{2})^{2}}+ b_{2}
(-\sin(\frac{x}{2})^{2}) + b_{3} e^{-(x+\frac{\pi}{2})^{2}}$ with $b_{i} > 0$
and $\sum_{i=1}^{3}b_{i} = 1$, $b_{i}$'s
are randomly sampled in $(0,1)$ with uniform distribution. The training controls $u_{i}(t)$ are uniformly randomly
generated in $[-1,1]$.
We design a common dictionary for the two tracking problems
of the form $\vpsi(\boldsymbol{\eta}) =
(1, \int_{X} \eta(t,x) dx, \int_{X} \eta^{2}(t,x) dx,
NN(\boldsymbol{\eta}))^{T}$ with 3 trainable elements
so that the resulting in the dimension of $\vpsi$ is 6.
The $NN$ is a residual network with two hidden layers having width 16 and $NN_{K}$
has two hidden layers with a width of 36.
The observable matrix $B$ in~\cref{opt_prob_psi_control}
is the row vector $(0,1,0,0,0,0)$ for mass tracking,
while $B = (0,0,1,0,0,0)$ for momentum tracking.

Instead of computing an optimal control using
non-linear programming,
we compute $\{\hat{\uu}_{n}\}_{n=0}^{N-1}$ successively
in time by solving~\cref{opt_prob_psi_control} with model predictive control (MPC)~\cite{grune2017nonlinear}.
We substitute the currently computed control into~\cref{eq:kdv},
which is integrated with RK23~\cite{bogacki19893} to get the next predicted state
$\boldsymbol{\eta}_{n+1}$.
This map is denoted by $\f$.
The predicted state is the initial value at the next MPC step.
In each MPC step,
the parametric Koopman dynamics is used to solve for control inputs over a specified time
horizon $\tau$ by minimizing the cost function~\cite{korda2018linear}.
Concretely,
to obtain $\hat{\uu}_n$
we solve the optimization problem
\begin{align}
    \label{opt_mpc}
    \begin{split}
    \min_{\{\tilde{\uu}_{i}\}_{i=n}^{n+\tau-1}} & \quad J =
    \sum_{i=n}^{n+\tau-1} \left(\|r_{i+1} - m_{i+1} \|^2 + \lambda \|\tilde{\uu}_{i}\|^{2}\right)
    \\
    \text{s.t.}
    & \quad
    m_{i+1} = B \prod_{j=0}^{i-n} K(\tilde{\uu}_{n+j}) \vpsi(\hat{\boldsymbol{\eta}}_{n}), \quad i=n,\dots,n+\tau -1,\\
    & \quad \hat{\boldsymbol{\eta}}_{n} = \f(\hat{\boldsymbol{\eta}}_{n-1}, \hat{\uu}_{n-1}), \quad \hat{\boldsymbol{\eta}}_{0} = \boldsymbol{\eta}_{0},\\
    & \quad -1 \leq u_{k,n} \leq 1, \quad k = 1,2,3,
    \end{split}
\end{align}
giving $\{\tilde{\uu}_i\}_{i=n}^{n+\tau-1}$,
and then set $\hat{\uu}_n = \tilde{\uu}_n$.
We iterate this for $n=0,1, \dots, N_{T}$ where $N_{T}$ is the number of all the time steps in the control problem.
Here, $\lambda \geq 0$ is a regularization coefficient.

In experiments, we compare three approaches: \pk,
M2~\eqref{koopman_linear} and M3~\eqref{koopman_bilinear}.
All approaches utilize trainable state dictionaries with
identical structures.
The starting values of the tracking are computed by the
state $\boldsymbol{\eta}(t_{0}) = 0.2$, then we get $m^{(\text{mass})}_{0}=1.27$
and $m^{(\text{momentum})}_{0}=0.25$. Our objective involves tracing the mass or
momentum reference trajectories
\begin{align*}
    \begin{aligned}
        r^{(\text{mass})}_{n}=
        \begin{cases}
            1.90, & \text{for } n \leq 500\\
            3.16, & \text{for } n > 500
        \end{cases},
        \quad
        r^{(\text{momentum})}_{n}=
        \begin{cases}
            0.57, & \text{for } n \leq 500\\
            1.58, & \text{for } n > 500
        \end{cases}.
    \end{aligned}
\end{align*}

Before discussing how to solve this control problem, we show the predictions of mass and momentum in~\cref{sec:kdv_mass_momentum_pred}. \pk \ outperforms both M2 and M3 models on the forward prediction, providing a foundation for solving control problems. Now we shift our attention back to addressing the control problems.
In~\cref{fig:kdv_comparison}, we present the tracking results
for mass (a) and momentum (b) with the MPC horizon $\tau$ set to 10.
When $\lambda = 0.005$, \pk \ exhibits better tracking capabilities
than the other two methods, attributed to its ability to capture the inherent
non-linearity present in~\cref{eq:kdv}.
Furthermore, when considering mass
tracking under $\lambda=0$,~\cref{eq:kdv} admits an explicit optimal control
solution $u_{i,n} = \frac{1}{2}$ for $0\leq n \leq 59$
and $500\leq n \leq 619$ while $u_{i,n} = 0$ otherwise.
This follows directly from integrating~\cref{eq:kdv}
over the spatial domain.
The evolution of mass under this optimal control is labelled as
``KdV'' in figure (a) of~\cref{fig:kdv_comparison}.
In the case of linear and bi-linear models,
the absence of regularization in optimization leads
to failure of mass tracking since the
optimal control takes on the values $\pm 1$, i.e. on boundaries of the control set.
Over this switching range, the control function $w(t,\cdot)$ cannot be
well approximated by a linear function.
Similarly, for momentum tracking (\cref{fig:kdv_comparison}(b)),
\pk \ outperforms the other two methods, which struggle to track the reference.

\begin{figure}[htb!]
    \centering
    \includegraphics[width=1.0\linewidth]{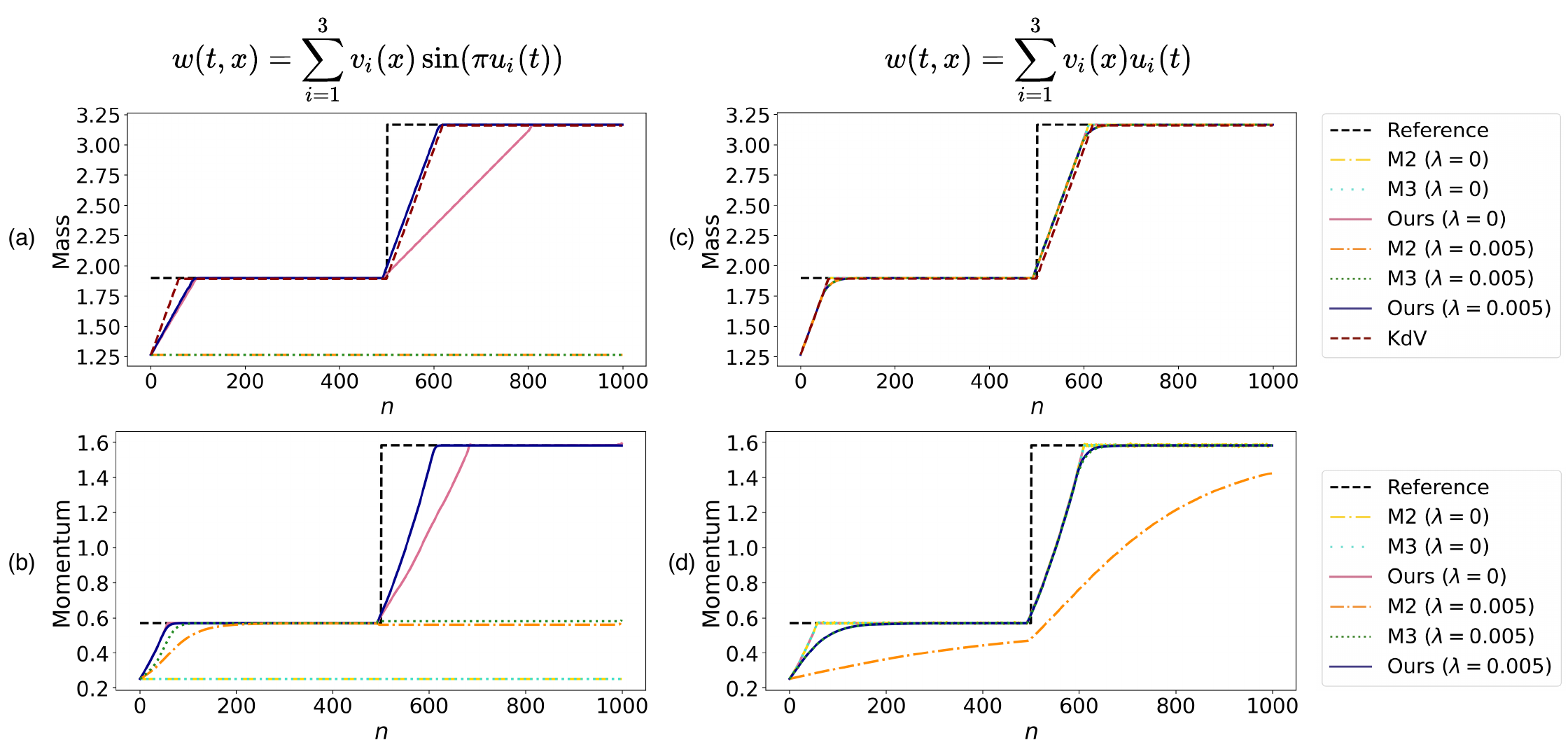}
    \caption{Tracking results for KdV equation utilizing a piece-wise constant
    reference trajectory. Subfigures (a)-(b) illustrate tracking performance for
    mass and momentum observables under $w(t,x) = \sum_{i=1}^{3}v_{i}(x)\sin(\pi
    u_{i}(t))$; Subfigures (c)-(d) present the same tracking comparison when
    $w(t,x) = \sum_{i=1}^{3}v_{i}(x) u_{i}(t)$.}
    \label{fig:kdv_comparison}
\end{figure}

To confirm that the performance disparity is due to the non-linear
dependence of the evolution equations on the control,
we perform an ablation study by setting $w(t,x)$ in~\cref{eq:kdv} to be
$\sum_{i=1}^{3}v_{i}(x)u_{i}(t)$, i.e. the forcing term depends
linearly on the control.
For $\lambda=0$, a similar explicit optimal control can be derived
($u_{i,n} = 1$ when $0 \leq n \leq 59$ and $500 \leq n \leq
619$, and $u_{i,n} = 0$ otherwise).
As expected, when the control system is linear,
the tracking outcomes of these three approaches
are similar when $\lambda = 0$,
\pk \ and M3 out-perform M2 when $\lambda = 0.005$,
as depicted in~\cref{fig:kdv_comparison}(c) and (d).
The optimization procedure employs the L-BFGS algorithm \cite{liu1989limited} using Python on a 3.20GHz Intel Xeon W-3245 with 125 GB of RAM. The average optimization times recorded are 5.82 seconds for the PK-NN model, 0.83 seconds for M2, and 1.69 seconds for M3, indicating that, when using the same optimization software, the PK-NN model requires between three and seven times longer than the other two approaches. This observation shows that \pk \ demands substantially more computational resources than the other models.
However, \pk \ achieves satisfactory optimization results and significantly improves control capabilities. The comparative analysis of computational costs is further elaborated in~\cref{sec:kdv_computational_cost}.

We have shown that PK-NN outperforms linear (M2) and bi-linear (M3) Koopman models on tracking problems due to the stronger non-linearity. To determine if PK-NN shows improvements over other non-linear methods, we add an experiment with standard nonlinear MPC (NMPC). 
We build a neural network $\f_{NN}$ and train it on the same dataset to simulate the dynamics $\boldsymbol{\eta}_{n+1} = \f_{NN} (\boldsymbol{\eta}_{n}, \uu_{n})$. 
The NMPC approach first learns dynamics directly in the state space and then performs optimization to determine the controls,
whereas the Koopman model focuses on learning within the observable space
and controlling the observables directly.
This results in NMPC being more inefficient when the state dimension is high.
To demonstrate this, we consider problems with different spatial discretization resolutions
$N_x$.
We have discussed the results with $N_x = 128$ in our paper,
and now we extend the experiments to $N_x = 64$ and $N_x = 256$. 
The neural network $\f_{NN}$ uses a fully-connected architecture with ReLU as the activation function.
Both the input and output are vectors of size $N_x$, which matches the dimension of $\boldsymbol{\eta}$.
The parameter counts are determined by specifying a test error tolerance for forward prediction,
measured by the mean absolute error in momentum prediction.
The errors for different initial conditions are controlled to around $10^{-4}$, and multi-step predictions over a horizon length equivalent to the MPC process are controlled to below $3 \times 10^{-3}$.
We select the smallest network that can achieve this error tolerance.
The complexity of the NMPC model significantly increases as $N_x$ grows.
\Cref{table:N_x_comparison_NMPC_structure} outlines the model structures and parameter counts for PK-NN and NMPC across different $N_x$. Under these settings, we compare the tracking performance and computational cost of PK-NN and NMPC.
\begin{table}[H]
    \centering
    \small
    \begin{tabular}{c|ccc}
        \toprule
        $N_x$ & 64 & 128 &256 \\
        \midrule
        $\f_{NN}$ Arch.
         & [512,512]
         & [768,768]
         & [1024,1024] 
        \\
        \midrule
        $\f_{NN}$ Param. No.
         & 330304
         & 790400
         & 1578240
        \\
        \midrule
        \pk \ Arch. ($\vpsi$ \& $K$)
         & [16, 16] \& [36, 36]
         & [16, 16] \& [36, 36]
         & [16, 16] \& [36, 36]
        \\
        \midrule
        \pk \ Param. No.
         & 4205
         & 5229
         & 7277
        \\
        \bottomrule
    \end{tabular}
    \caption{The comparison between PK-NN and NMPC model architectures and trainable parameter counts across different $N_x$.}
    \label{table:N_x_comparison_NMPC_structure}
\end{table}
Because $\f_{NN}$ learns the dynamics of $\boldsymbol{\eta}$ not in the observable space, differing from the Koopman model's setup, the tracking problem becomes
\begin{align}
    \label{opt_nmpc}
    \begin{split}
    \min_{\{\tilde{\uu}_{i}\}_{i=n}^{n+\tau-1}} & \quad J =
    \sum_{i=n}^{n+\tau-1} \left(\|r_{i+1} - m_{i+1} \|^2 + \lambda \|\tilde{\uu}_{i}\|^{2}\right)
    \\
    \text{s.t.}
    & \quad m_{i+1} = \g(\tilde{\boldsymbol{\eta}}_{i+1}), \\
    & \quad \tilde{\boldsymbol{\eta}}_{i+1} = \f_{NN}(\tilde{\boldsymbol{\eta}}_{i}, \tilde{\uu}_{i}), \quad \tilde{\boldsymbol{\eta}}_{n} = \hat{\boldsymbol{\eta}}_{n}, \quad i=n,\dots,n+\tau -1,\\
    & \quad \hat{\boldsymbol{\eta}}_{n} = \f(\hat{\boldsymbol{\eta}}_{n-1}, \hat{\uu}_{n-1}), \quad \hat{\boldsymbol{\eta}}_{0} = \boldsymbol{\eta}_{0},\\
    & \quad -1 \leq u_{k,n} \leq 1, \quad k = 1,2,3,
    \end{split}
\end{align}
where $\g$ is the observable function corresponding to 
mass or momentum depending on the problem. 
This optimization gives $\{\tilde{\uu}_i\}_{i=n}^{n+\tau-1}$,
and then sets $\hat{\uu}_n = \tilde{\uu}_n$.
We iterate this for $n=0,1, \dots, N_{T}$ where $N_{T}$
is the number of time steps in the control problem.
In this experiment, 
we use NMPC to track the same observable ``momentum'' as \pk \ for the problem with non-linear forcing (sin) and $\lambda = 0.005$.
The tracking results are shown in~\cref{fig:check_resolution}(c), where both \pk \ and NMPC effectively track the reference trajectory but \pk \ performs faster and more accurately. We define the mean absolute error between the tracked trajectory and the reference as our evaluation metric. The comparison of tracking errors is illustrated in~\cref{fig:check_resolution}(a), showing that \pk \ outperforms NMPC across different $N_x$, despite NMPC using a more complex neural network.
Additionally, we compare the time consumed per epoch during the training of each model. As the number of trainable parameters increases, the training cost for the NMPC model rises sharply, as demonstrated in~\cref{fig:check_resolution}(b). For the computational cost of solving the optimization problem during tracking, we compare the average, maximum, and minimum time per step for different $N_x$ in~\cref{fig:check_resolution}(d). When achieving similar tracking results, the average time cost for both training and tracking is higher for NMPC than for \pk.
These results show that as the state dimension grows, both the training and tracking challenges for NMPC increase, but \pk \ is not significantly affected. However, when the state dimension is low, NMPC can also efficiently solve the problem. Although not explicitly indicated in the figures, it is evident that with lower dimensions, there is a trend of improvement in both tracking accuracy and training time.
\begin{figure}[htb!]
    \centering
    \includegraphics[width=1.0\linewidth]{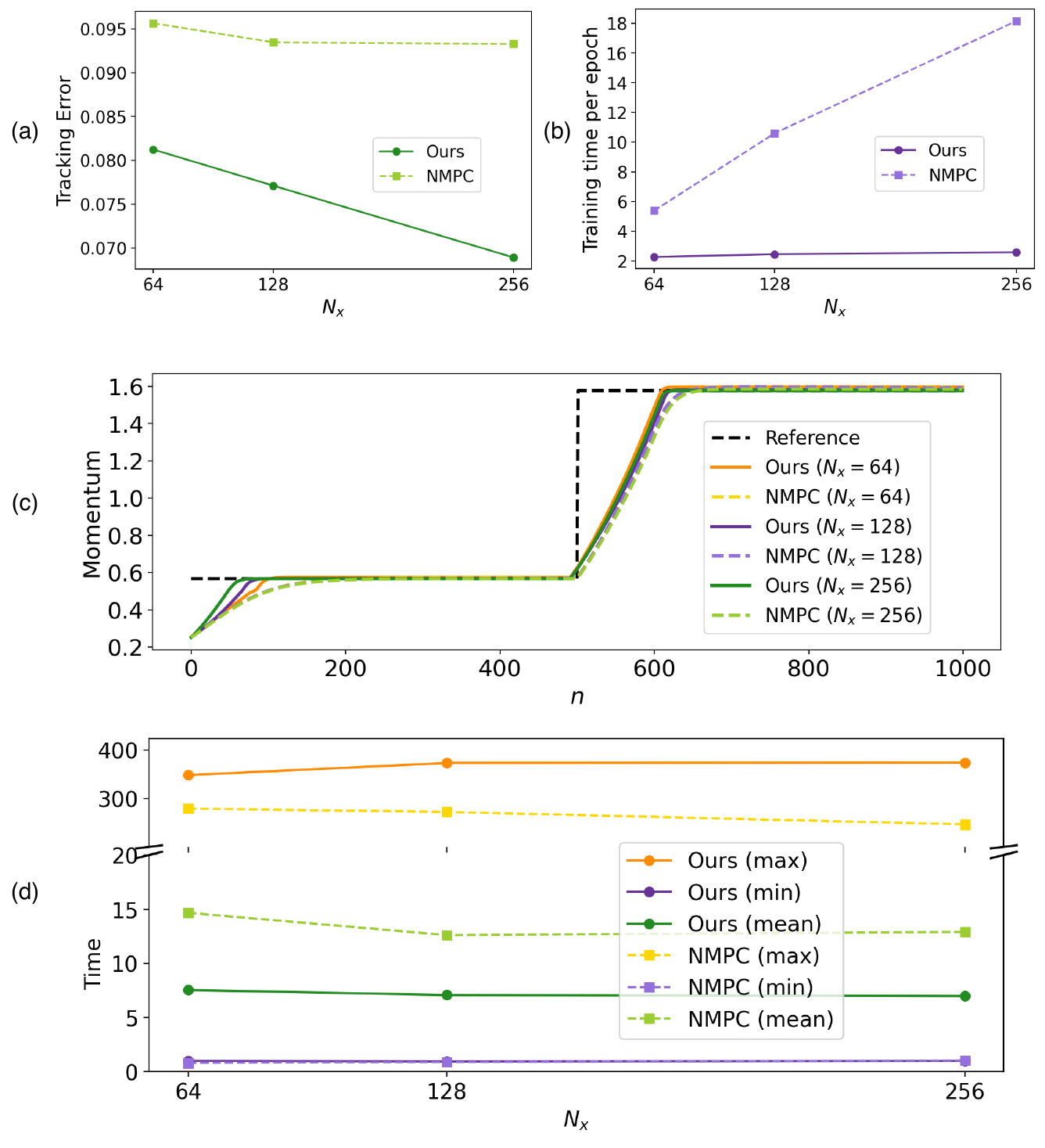}
    \caption{Comparison between PK-NN and NMPC. 
    (a) Tracking error results for both methods under different $N_x$. 
    (b) Average time per epoch (in seconds) during model training for different 
    $N_x$. 
    (c) Tracking results for cases with varying $N_x$. 
    (d) Time consumption per optimization step for solving tracking problems with different $N_x$, showing the average, maximum, and minimum values.}
    \label{fig:check_resolution}
\end{figure}

We end with an analysis of the controllability of the learned
parametric Koopman dynamics as discussed in~\cref{sec:controllability}.
Recall that a sufficient condition for controllability
is for the matrix $C$ (see~\cref{eq:matrix_C}) to have full rank.
When using a deep, fully connected network
to approximate $K$, we can control the rank
of $C$ by setting $N_K$, the number of learned features
(the width) of the penultimate layer.
In the following, we repeat the previous experiments
for $N_{K}=37, 31, 13, 7$.
After training, we uniformly randomly sample 2000 $\uu_{i} \in U$ to compute the
matrix $C$.
The decay of the singular values of $C$ is shown in~\cref{fig:singular_value_NK}.
Note that the dimension of the dictionary is $6$, and the components in the first row of the matrix $K(\uu)$ are constants. Hence, the rank condition in this setting reduces to $5 \times 6 = 30$.
Only the case $N_{K}=37$ satisfies the rank condition
that guarantees controllability.
We now solve the tracking problem under the three choices of $N_K$
for $\lambda=0.005$,
and the results shown in~\cref{fig:tracking_mass_NK} and~\cref{fig:tracking_momentum_NK}
confirms that the tracking performance may improve with greater controllability.
A detailed discussion can be found in~\cref{sec:K_structure}.

\begin{figure}[htb!]
    \centering
        \begin{subfigure}{0.32\textwidth}
            \centering
            \includegraphics[width=1.0\linewidth]{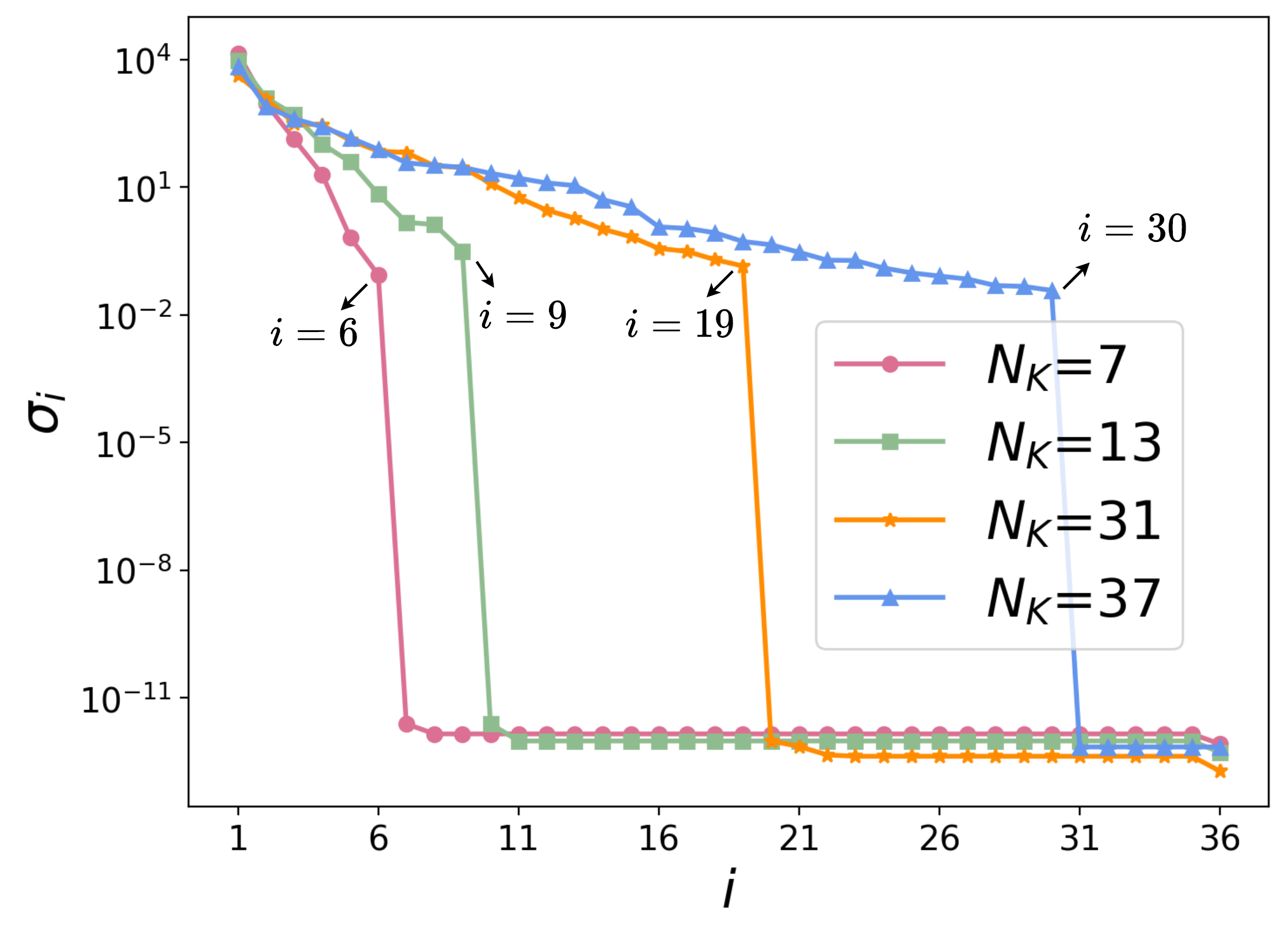}
            \caption{Singular values of $C$.}
                \label{fig:singular_value_NK}
        \end{subfigure}
        \begin{subfigure}{0.32\textwidth}
            \centering
            \includegraphics[width=1.0\linewidth]{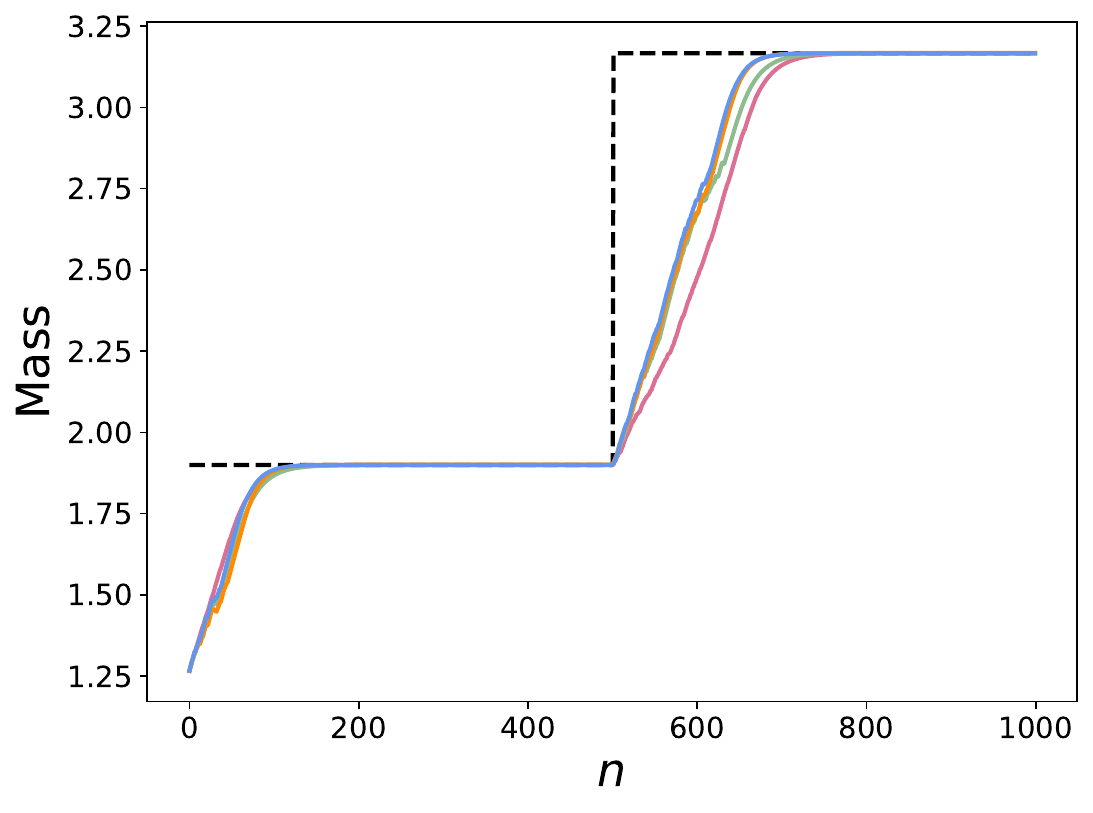}
            \caption{Track mass.}
      \label{fig:tracking_mass_NK}
        \end{subfigure}
        \begin{subfigure}{0.32\textwidth}
            \centering
            \includegraphics[width=1.0\linewidth]{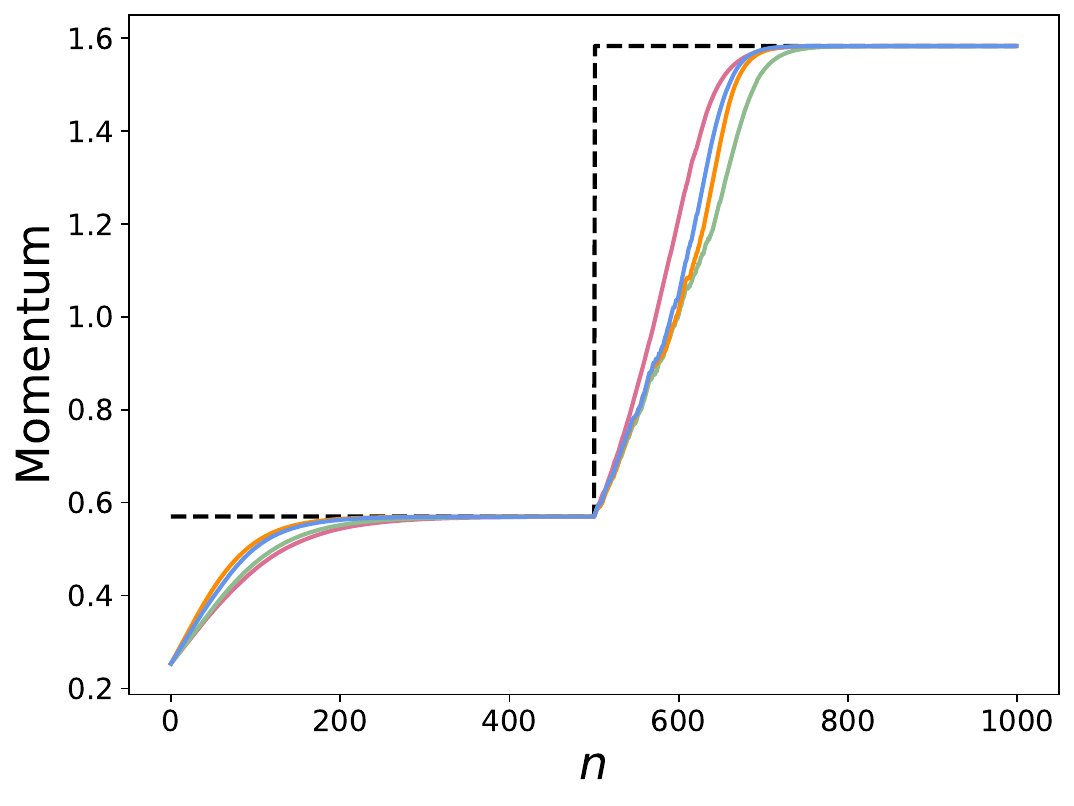}
            \caption{Track momentum.}
      \label{fig:tracking_momentum_NK}
        \end{subfigure}
        \caption{Comparison on the controllability of \pk \ with $N_{K} = 7, 13, 37$.}
        \label{fig:comparison_NK}
    \end{figure}

\section{Conclusion}
We propose and implement a data-driven methodology for creating approximate Koopman dynamics for parametric dynamical systems. The primary challenge with the Koopman operator lies in identifying the invariant subspace that is independent of parameters and approximating a projected Koopman operator that acts on this space. To address this, we use neural networks to parameterize both the parameter-dependent Koopman operator and the parameter-independent state dictionary that spans an invariant subspace. These are trained simultaneously using trajectory data, with a loss function designed to reflect the dynamics within the Koopman latent space. 

In prediction tasks, \pk \ achieves improved performance over current methods, including optimized DMD, EDMD with RBF dictionary and EDMD with dictionary learning. Our method performs well in handling highly non-linear problems, as evidenced by experiments with the Van der Pol Mathieu equations, surpassing linear and bilinear variants of the existing approach for parametric, or input-dependent, Koopman operator analysis. Additionally, \pk \ performs more effectively in scenarios involving high-dimensional states and parameters, outperforming existing Koopman structures generated by polynomial bases, with the FitzHugh-Nagumo (FhN) equation serving as a test case.
In addition to prediction problems, in control applications our approach surpasses the performance of 
existing Koopman operator analysis with linear or bilinear control schemes,
particularly in tracking observables in the Korteweg-de Vries (KdV) equation.
This adaptability to high-dimensional and strongly non-linear problems makes
our approach a suitable candidate for large-scale data-driven prediction and control applications.

\section*{Acknowledgements}
The authors acknowledge discussions with
MO Williams and Boyuan Liang at the initial stages of this work.

\newpage
\appendix

\section{Proof of~\cref{prop:parametric composition operator}}
\label{proof:Continuous Operators}
\begin{proof}
For each $\uu$, we suppose $\Kcal(\uu)$ is the composition operator induced by $\f(\cdot, \uu)$. We consider $S \in \Scal$ such that $m(S) < \infty$, then the indicator function $\mathbf{1}_{S} \in L^{2}(X, m)$.
If $\x \in \f^{-1}(\cdot, \uu)(S)$, we have $\f(\x, \uu) \in S$ since $\f(\cdot, \uu)$ is non-singular.
It is equivalent to $(\mathbf{1}_{S} \circ \f(\cdot,\uu))(\x) = 1$, which induces $\Kcal(\uu)\mathbf{1}_{S}(\x) = 1$.
Thus, we have $\Kcal(\uu)\mathbf{1}_{S} = \mathbf{1}_{\f^{-1}(\cdot, \uu)(S)}$.
Then we have
\begin{align}
    \begin{split}
        m(\f^{-1}(\cdot, \uu)(S)) &= \int \mathbf{1}_{\f^{-1}(\cdot, \uu)(S)} dm\\
        &= \int \Kcal(\uu)\mathbf{1}_{S} dm\\
        &= \int |\Kcal(\uu)\mathbf{1}_{S}|^{2} dm\\
        &= \|\Kcal(\uu)\mathbf{1}_{S}\|^{2}\\
        &\leq \|\Kcal(\uu)\|^{2} \|\mathbf{1}_{S}\|^{2}\\
        &= \|\Kcal(\uu)\|^{2} m(S),
    \end{split}
\end{align}
where $\|\Kcal\| = \sup_{\|\phi\|=1}\|\Kcal \phi\|$.
Let $b_{\uu} = \|\Kcal(\uu)\|^{2}$. Then we have $m(\f(\cdot, \uu)^{-1}(S)) \leq b_{\uu} m(S)$.

Conversely, if we have $m(\f^{-1}(\cdot, \uu)(S)) \leq b_{\uu} m(S)$,
then $m \f^{-1}(\cdot, \uu) \ll m$ (``$\ll$'' means absolute continuity).
The Radon-Nikodym derivative
$h_{\f(\cdot, \uu)}$ of $m\f^{-1}(\cdot, \uu)$ with respect to $m$
exists and $h_{\f(\cdot, \uu)} \leq b_{\uu}$ almost everywhere.
Let $\phi \in L^{2}(X, m)$ and then we have
\begin{align}
        \|\Kcal(\uu) \phi\|^{2}
        = \int |\phi \circ \f(\cdot, \uu)|^{2} dm
        = \int |\phi|^{2} d m\f^{-1}(\cdot, \uu)
        = \int |\phi|^{2} h_{\f(\cdot, \uu)} dm
        \leq b_{\uu} \|\phi\|^{2}.
\end{align}
This shows that $\Kcal(\uu)$ is a bounded (and continuous) operator on $L^{2}(X, m)$.
\end{proof}


\section{Proof of \cref{prop:finite_param_koopman}}
\label{proof:finite_param_koopman}
In this section, we prove~\cref{prop:finite_param_koopman}. The following~\cref{lemma:pN_convergence} and~\cref{lemma:continuity_on_u} are necessary for our discussion.
\begin{lemma}
    \label{lemma:pN_convergence}
    Let $X$ be a finite measurable space with measure $m$.
    If $(e_{i})_{i=1}^{\infty}$ form an orthonormal basis of space $C_b(X)$. Define $P_{N}g = \argmin_{\tilde{g} \in \mathrm{span}((e_{i})_{i=1}^{\infty})} \|\tilde{g}-g\|$, then $P_{N}$ converge strongly to the identity operator $I$.
\end{lemma}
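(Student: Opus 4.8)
\textit{Proof plan.} The plan is to reduce the statement to the classical Hilbert–space fact that the partial–sum projections associated with an orthonormal basis converge strongly to the identity; the only input that is not pure bookkeeping is that $\mathrm{span}\{e_i : i \ge 1\}$ is dense not merely in $C_b(X)$ but in all of $L^2(X,m)$, so that $(e_i)_{i=1}^\infty$ is in fact an orthonormal basis of the Hilbert space $L^2(X,m)$ and not just of the incomplete pre-Hilbert space $C_b(X)$.

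First I would record the structural preliminaries. Since $m$ is finite, $C_b(X) \subseteq L^2(X,m)$ and $\|\phi\|_{L^2} \le \sqrt{m(X)}\,\|\phi\|_\infty$ for every $\phi \in C_b(X)$, so density of $\mathrm{span}\{e_i\}$ in $C_b(X)$ (in sup-norm or in $L^2$-norm) entails $L^2$-density in $C_b(X)$. For each $N$, set $V_N := \mathrm{span}\{e_1,\dots,e_N\}$, the finite truncation intended in the statement; $V_N$ is finite-dimensional, hence closed in $L^2(X,m)$, so the orthogonal projection onto $V_N$ exists and is unique, and it coincides with the $\argmin$ defining $P_N$. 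Thus $P_N$ is a well-defined linear operator with $P_N = P_N^2 = P_N^*$, $\|P_N\| \le 1$, $V_N \subseteq V_{N+1}$, and $P_N g = \sum_{i=1}^N \langle g, e_i\rangle\, e_i$.

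Next I would establish the density claim $\overline{\mathrm{span}\{e_i\}}^{\,L^2} = L^2(X,m)$. By hypothesis $\mathrm{span}\{e_i\}$ is dense in $C_b(X)$, so it suffices to show $C_b(X)$ is dense in $L^2(X,m)$. This is the standard consequence of regularity of a finite Borel measure on the metric space $X \subseteq \R^{N_x}$: simple functions are dense in $L^2$, and for any Borel $S$ with $m(S)<\infty$ the indicator $\mathbf 1_S$ can be approximated in $L^2$-norm by functions in $C_b(X)$ (e.g.\ via Lusin's theorem, or via Urysohn-type functions built from an outer-regular approximation of $S$). Composing the two density statements gives that $\mathrm{span}\{e_i\}$ is dense in $L^2(X,m)$.

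Finally I would conclude by the usual argument and flag the obstacle. Fix $g \in L^2(X,m)$ and $\varepsilon>0$; by density choose $N_0$ and $h \in V_{N_0}$ with $\|g-h\|<\varepsilon$. For every $N \ge N_0$ we have $h \in V_{N_0}\subseteq V_N$, and since $P_N g$ is by definition the point of $V_N$ closest to $g$,
\[
    \|g - P_N g\| \;\le\; \|g - h\| \;<\; \varepsilon .
\]
Hence $\|g - P_N g\| \to 0$ for every $g\in L^2(X,m)$, which is precisely strong convergence $P_N \to I$. The one nonroutine point I would state carefully is the density of $C_b(X)$ in $L^2(X,m)$: it is exactly where the metric/Borel structure of the state space is used, and it is what upgrades ``orthonormal basis of $C_b(X)$'' to ``orthonormal basis of $L^2(X,m)$,'' without which $P_N$ would only converge to the orthogonal projection onto $\overline{C_b(X)}^{\,L^2}$.
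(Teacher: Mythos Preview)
Your proof is correct and reaches the same conclusion as the paper, but by a somewhat different (and more careful) path. The paper's own argument is a two-line Parseval computation: write $\phi = \sum_{i=1}^\infty c_i e_i$, invoke Parseval to get $\sum_i |c_i|^2 = \|\phi\|^2$, and observe that $\|P_N\phi - \phi\|^2 = \sum_{i>N}|c_i|^2 \to 0$. You instead use the closest-point characterization of $P_N$ together with an explicit density step ($\mathrm{span}\{e_i\}$ dense in $C_b(X)$, $C_b(X)$ dense in $L^2(X,m)$), which is logically equivalent but makes transparent exactly why the partial sums exhaust all of $L^2$ rather than merely $C_b$. That density point is precisely what the paper's proof uses implicitly when it writes $\phi = \sum c_i e_i$ and applies Parseval for an arbitrary $\phi$; your version surfaces it and justifies it, so your argument is the more rigorous of the two. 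What the paper's version buys is brevity; what yours buys is that it would still go through verbatim in any setting where one only knows density of the span, without needing to appeal to the Fourier-coefficient expansion at all.
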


\begin{proof}
    Let $\phi = \sum_{i=1}^{\infty}c_{i}e_{i}$ with $\|\phi\|=1$. Then by Parseval's identity $\sum_{i=1}^{\infty}|c_{i}|^{2} = 1$ and
    \begin{align}
        \|P_{N}\phi - \phi\|^{2} = \|
        \sum_{i=N+1}^{\infty} c_{i}e_{i}\|^{2}
        =
        \sum_{i=N+1}^{\infty}| c_{i}|^{2}
        \to 0, \quad \text{as } N \to \infty.
    \end{align}
\end{proof}

\begin{lemma}
    \label{lemma:continuity_on_u}
    Let $X$ be a finite measurable space with measure $m$.
    Given an observable function $\phi \in C_{b}(X)$. Assume that $f(\x,\cdot): U \to X$ is continuous for almost all $\x \in X$. If (\ref{eq:pkoopman_ref}) holds, then $\Kcal(\cdot)\phi: U \to C_b(X)$ is continuous on $U$ with respect to $L^{2}$ norm, in the sense that $\lim_{\uu_{n}\to\uu} \|\Kcal(\uu_{n})\phi - \Kcal(\uu)\phi\| = 0$.
\end{lemma}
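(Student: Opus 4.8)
The plan is to reduce the statement to a single application of the dominated convergence theorem, exploiting that $(X,\Scal,m)$ is a finite measure space. Since $U \subseteq \R^{N_u}$ is metrizable, it suffices to work with sequences: fix $\uu \in U$ and an arbitrary $(\uu_n) \subset U$ with $\uu_n \to \uu$, and show $\|\Kcal(\uu_n)\phi - \Kcal(\uu)\phi\| \to 0$. By the definition of the parametric Koopman operator in \eqref{eq:pkoopman_ref}, this is exactly the assertion that
$\int_X |\phi(\f(\x,\uu_n)) - \phi(\f(\x,\uu))|^2\, m(\bm d\x) \to 0$.

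First I would establish pointwise a.e.\ convergence of the integrand. By hypothesis $\f(\x,\cdot): U \to X$ is continuous for $m$-a.e.\ $\x \in X$, so for such $\x$ we have $\f(\x,\uu_n) \to \f(\x,\uu)$ in $X$. Since $\phi \in C_b(X)$ is continuous, composing gives $\phi(\f(\x,\uu_n)) \to \phi(\f(\x,\uu))$ for $m$-a.e.\ $\x$, hence $|\phi(\f(\x,\uu_n)) - \phi(\f(\x,\uu))|^2 \to 0$ almost everywhere. Next I would supply an integrable dominating function: because $\phi$ is bounded, $|\phi(\f(\x,\uu_n)) - \phi(\f(\x,\uu))|^2 \le 4\|\phi\|_\infty^2$ uniformly in $n$ and $\x$, and the constant $4\|\phi\|_\infty^2$ is $m$-integrable precisely because $m(X) < \infty$. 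The dominated convergence theorem then gives $\|\Kcal(\uu_n)\phi - \Kcal(\uu)\phi\|^2 \to 0$, and since the sequence $(\uu_n)$ was arbitrary this is the claimed continuity. I would also note in passing that $\Kcal(\uu)\phi = \phi \circ \f(\cdot,\uu)$ is bounded and hence lies in $L^2$ on the finite measure space, so all expressions are well defined; membership in $C_b(X)$ is inherited from the standing assumptions on $\f$.

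I do not expect a genuine obstacle. The only points requiring a little care are the chaining of the two continuities (of $\f(\x,\cdot)$ in the parameter and of $\phi$) to obtain a.e.\ pointwise convergence of the composed functions on the ``bad'' null set where $\f(\x,\cdot)$ may fail to be continuous, and the observation that finiteness of $m$ is exactly what makes the bounded dominating constant integrable. Conceptually, the lemma is the parametric analogue of the elementary fact that composition with a continuous map acts continuously on $L^2$ of a finite measure space, and the proof is correspondingly short.
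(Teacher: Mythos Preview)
Your proposal is correct and follows essentially the same route as the paper: pointwise a.e.\ convergence of $|\phi(\f(\x,\uu_n))-\phi(\f(\x,\uu))|^2$ from the two chained continuities, domination by the constant $4\|\phi\|_\infty^2$ (integrable since $m(X)<\infty$), and then dominated convergence. The paper's argument is identical in structure, only slightly terser in that it does not spell out the metrizability remark or the integrability of the constant.
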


\begin{proof}
Since $\f(\x,\uu_{n})\to\f(\x,\uu)$ as $\uu_{n} \to \uu$ almost everywhere $\x \in X$ and $\phi(\x)$ is a continuous function, we have
\begin{align}
\label{limit_phi_f}
    \lim_{\uu_{n}\to\uu} |\phi\circ\f(\x,\uu_{n}) - \phi\circ\f(\x,\uu)|^{2} = 0
\end{align}
almost everywhere on $X$.
Since $|\phi(\x)|\leq M$ on $X$ for some $M \in [0,\infty)$, we have $|\phi\circ\f(\x,\uu_{n}) - \phi\circ\f(\x,\uu)|^{2} \leq 4 M^{2}$. By the dominated convergence theorem,
\begin{align}
    \label{limit_K(u)_phi}
    \begin{split}
    \lim_{\uu_{n}\to\uu} \|\Kcal(\uu_{n})\phi - \Kcal(\uu)\phi\|^{2} &= \lim_{\uu_{n}\to\uu} \int_{X}|\Kcal(\uu_{n})\phi(\x) - \Kcal(\uu)\phi(\x)|^{2}m(d\x)\\
    & = \lim_{\uu_{n}\to\uu} \int_{X}|\phi\circ\f(\x,\uu_{n}) - \phi\circ\f(\x,\uu)|^{2}m(d\x)\\
    & = \int_{X} \lim_{\uu_{n}\to\uu}|\phi\circ\f(\x,\uu_{n}) - \phi\circ\f(\x,\uu)|^{2}m(d\x)\\
    & = 0.
    \end{split}
    \end{align}
    Therefore, $\lim_{\uu_{n}\to\uu} \|\Kcal(\uu_{n})\phi - \Kcal(\uu)\phi\| = 0$.
\end{proof}

In~\cref{prop:ineq_fininte_param}, we demonstrate that an inequality employed in the proof of~\cref{prop:finite_param_koopman} is applicable to a finite set of parameters for given observables.

\begin{proposition}
    \label{prop:ineq_fininte_param}
    Let $U = \{\uu_{1}, \uu_{2}, \dots, \uu_{s}\}$ be a finite set of parameters
    and assume that the observables of interest
    $\g$ satisfy $g_j \in C_b(X)$.
    Then, for any $\varepsilon > 0$,
    there exists a positive integer $N_\psi > 0$,
    a dictionary $\vpsi = \{\psi_1,\dots,\psi_{N_\psi}\}$
    with $\psi_i \in L^2(X,m)$,
    a set of vectors $\{\va_{j} \in \R^{N_{\psi}}: j=1,\dots,N_y\}$,
    such that $g_j = \va_j^T \vpsi$ and for all $\uu \in U$,
    we have
    $\|\Kcal(\uu)g_{j} - P_{N_{\psi}}\Kcal(\uu) g_{j}\|\leq \varepsilon$, where $P_{N_\psi}g = \argmin_{\tilde{g} \in \mathrm{span}(\vpsi_{N_{\psi}})} \|\tilde{g}-g\|$.
\end{proposition}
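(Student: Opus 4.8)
The plan is to construct the dictionary $\vpsi$ explicitly by combining the observables $g_j$ themselves with a finite initial segment of a fixed orthonormal basis of $L^2(X,m)$, choosing the segment long enough to capture — uniformly over the finitely many $\uu \in U$ — essentially all of the mass of each $\Kcal(\uu)g_j$.

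First I would invoke \cref{lemma:pN_convergence}: fix a sequence $(e_i)_{i=1}^\infty$ in $C_b(X)$ that is orthonormal in $L^2(X,m)$ and complete, so that the orthogonal projections $Q_N$ onto $\mathrm{span}(e_1,\dots,e_N)$ converge strongly to the identity. (Such a sequence exists because $X$ has finite measure, hence $C_b(X) \subseteq L^2(X,m)$, and $C_b(X)$ is dense in the separable space $L^2(X,m)$.) Next, using the standing assumption that each $\Kcal(\uu)$ is bounded on $L^2$ (\cref{prop:parametric composition operator}), I would note that $\Kcal(\uu)g_j = g_j \circ \f(\cdot,\uu) \in L^2(X,m)$ for every $j=1,\dots,N_y$, so Parseval's identity gives $\sum_{i=1}^\infty |\langle \Kcal(\uu)g_j, e_i\rangle|^2 = \|\Kcal(\uu)g_j\|^2 < \infty$. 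Hence for each pair $(\uu,j)$ there is an index $N(\uu,j)$ past which the tail sum falls below $\varepsilon^2$; since $U$ is finite and $j$ ranges over a finite set, $N_0 := \max_{\uu \in U,\, 1 \le j \le N_y} N(\uu,j)$ is finite, and $\|\Kcal(\uu)g_j - Q_{N_0}\Kcal(\uu)g_j\|^2 = \sum_{i > N_0} |\langle \Kcal(\uu)g_j, e_i\rangle|^2 < \varepsilon^2$ simultaneously for all $\uu \in U$ and all $j$.

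Then I would set $N_\psi := N_y + N_0$ and define $\vpsi := (g_1,\dots,g_{N_y},e_1,\dots,e_{N_0})^{\tp}$, with $\va_j$ the $j$-th coordinate vector of $\R^{N_\psi}$, so that $g_j = \va_j^{\tp}\vpsi$ and each $\psi_i \in C_b(X) \subset L^2(X,m)$. The concluding step is the elementary remark that enlarging an approximating subspace cannot increase the best-approximation error: since $\mathrm{span}(e_1,\dots,e_{N_0}) \subseteq \mathrm{span}(\vpsi)$ and $P_{N_\psi}g$ is by definition the $L^2$-nearest point of $\mathrm{span}(\vpsi)$ to $g$, we have $\|g - P_{N_\psi}g\| \le \|g - Q_{N_0}g\|$; applying this with $g = \Kcal(\uu)g_j$ gives $\|\Kcal(\uu)g_j - P_{N_\psi}\Kcal(\uu)g_j\| < \varepsilon$ for all $\uu \in U$, which is the claim.

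I do not expect a genuine obstacle here, since the argument is a Hilbert-space truncation estimate. The two points needing care are that $\Kcal(\uu)g_j$ really lies in $L^2$ so that Parseval applies — exactly what boundedness of $\Kcal(\uu)$ provides — and that the orthonormal basis can be taken inside $C_b(X)$, which is the content of \cref{lemma:pN_convergence}. The only place finiteness of $U$ enters is in passing from per-parameter truncation levels to the single uniform $N_0$; this is precisely the step that must be upgraded to a compactness-and-continuity argument (using \cref{lemma:continuity_on_u}) when $U$ is merely compact, as needed in the proof of \cref{prop:finite_param_koopman}.
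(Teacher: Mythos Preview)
Your proof is correct and follows essentially the same approach as the paper's: build the dictionary from the $g_j$ together with an initial segment of an orthonormal basis of $L^2$ lying in $C_b(X)$, invoke strong convergence of the projections (via Parseval), and take a maximum over the finite index set $U \times \{1,\dots,N_y\}$. The paper additionally routes through a normalization by $\|\Kcal(\uu_i)g_j\|$ and then undoes it using the boundedness constants $b_{\uu_i}$ from \cref{prop:parametric composition operator}, a detour you correctly avoid since the collection $\{\Kcal(\uu)g_j : \uu\in U,\ j\le N_y\}$ is finite and each element already lies in $L^2$.
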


\begin{proof}
    Let $\psi_{j} = g_{j}, j=1,\dots,N_{y}$ and this can ensure the existence of $\va_{j}$. Let $(\psi_{i})_{i=1}^{\infty}$ be a sequence of functions in $C_b(X)$ such that $\mathrm{span}((\psi_{i})_{i=1}^{\infty})$ is dense in $C_b(X)$. The dictionary $\vpsi_{N} =  (\psi_1(\x), \psi_2(\x), \dots, \psi_{N}(\x))^T$ consists of the first $N$ elements of $(\psi_{i})_{i=1}^{\infty}$. Then we can compute an orthonormal basis $(e_{i})_{i=1}^{N}$ of $\mathrm{span}(\vpsi_{N})$ by Gram-Schmidt process and note $(e_{i})_{i=1}^{\infty}$ as an orthonormal basis of $C_b(X)$ space. Thus, $\mathrm{span}(\vpsi_{N}) = \mathrm{span}((e_{i})_{i=1}^{N})$ and $P_{N}g = \argmin_{\tilde{g} \in \mathrm{span}((e_{i})_{i=1}^{N})} \|\tilde{g}-g\|$ by definition.

    For each $\uu_{i} \in U$, we have $\x_{n+1} = \f(\x_{n}, \uu_{i})$ and $g_{j}(\x_{n+1}) = \Kcal(\uu_{i})g_{j}(\x_{n})$.
    By~\cref{lemma:pN_convergence}, for any $ \varepsilon_{i} >0$, there exists $ N_{i,j}^{*} \in \N^{+}$ such that $N_{i,j}\geq N_{i,j}^{*}$ implies
    \begin{align}
        \frac{\|\Kcal(\uu_{i})g_{j} - P_{N_{i,j}}\Kcal(\uu_{i}) g_{j} \|} {\|\Kcal(\uu_{i}) g_{j}\|}
        = \| \frac{\Kcal(\uu_{i})g_{j}}{\|\Kcal(\uu_{i}) g_{j}\|} -  P_{N_{i,j}}\frac{\Kcal(\uu_{i}) g_{j}}{\|\Kcal(\uu_{i}) g_{j}\|}\|
        \leq \varepsilon_{i}
    \end{align}
    for each $j$.
    Let $N_{i} = \max\{N_{i,j}^{*}\}_{j=1,\dots,N_y}$.
    Then we note that
    \begin{align}
    \label{ineq:fininte_param_separately}
    \|\Kcal(\uu_{i})g_{j} - P_{N_{i}}\Kcal(\uu_i) g_{j} \|
    =\frac{\|\Kcal(\uu_{i})g_{j} - P_{N_{i}}\Kcal(\uu_{i}) g_{j} \|} {\|\Kcal(\uu_{i}) g_{j}\|} \|\Kcal(\uu_{i}) g_{j}\|
    \leq \varepsilon_{i} \sqrt{b_{\uu_{i}}} \|g_{j}\|.
    \end{align}
    Given $\varepsilon > 0$, we set $\varepsilon_{i} = \frac{\varepsilon}{\sqrt{b_{\uu_{i}}}\max\{\|g_{j}\|\}}$ and $b_{\uu_{i}}$ is the coefficient discussed in~\cref{prop:parametric composition operator}. Then for each $\uu_{i}$, there exists a corresponding $N_{i}$ such that the inequality (\ref{ineq:fininte_param_separately}) holds.
    Define $N_{\psi} = \max{\{N_{1}, N_{2}, \dots, N_{n}\}}$, then
    \begin{align}
    \|\Kcal(\uu_{i})g_{j} - P_{N_{\psi}}\Kcal(\uu_i) g_{j} \|  \leq \varepsilon
    \end{align}
    holds for all $\uu_{i} \in U_{n}$ and all the given $g_{j}$'s.

\end{proof}

Now we have the \textbf{proof of~\cref{prop:finite_param_koopman}}.

\begin{proof}
\textbf{Step 1:}
By~\cref{lemma:continuity_on_u},
for each given observable $g_{j}$, $\Kcal(\cdot)g_{j} : U \to C_b(X)$ is uniformly continuous on compact set $U$.
We know that for any $\varepsilon '>0$, $\exists \delta_{j} >0$ such that $\|\Kcal(\vv)g_{j} - \Kcal(\uu)g_{j}\| \leq \varepsilon '$ for any $\|\vv - \uu\|\leq \delta_{j}$.
Let $\delta = \min\{\delta_{j}\}$.
We introduce the notion of a localized neighbourhood around each point $\vv_{i}$ in the set $U$.
This neighbourhood is formally defined as
\begin{align}
    B_{i} = \{\uu \in U: \|\vv_{i} - \uu\| \leq \delta, \vv_{i} \in U\},
\end{align}
where $\delta$ is a predetermined positive radius.
Given the collection $\{B_{i}\}$, it suffices to form an open cover for $U$.
By the compactness of $U$, we can assert the existence of a finite subcover, denoted as $\{B_{1}, B_{2}, \dots, B_{n^*}\}$, which is sufficient to entirely cover the set $U$.

For each $g_{j}$, we denote
$
    A_{i,j} =  \Kcal(B_{i})g_{j}
$
as the image of $B_{i}$ by $\Kcal(\cdot)g_{j}$. Then we have $\| \Kcal(\vv_{i})g_{j} - g'\| \leq \varepsilon '$ for any $g' \in A_{i,j}$ and the collection $\{A_{1,j}, A_{2,j}, \dots, A_{n^*,j}\}$ can cover $A_{j}=\Kcal(U)
g_{j}$.
Naturally, for any $\uu \in B_{i}$, we have $\|\vv_{i} - \uu\| \leq \delta$ and
\begin{align}
\label{ineq:uniform_continuity}
\|\Kcal(\vv_{i})g_{j} - \Kcal(\uu)g_{j} \| \leq \varepsilon '
\end{align}
for all given $g_{j}$'s.

\textbf{Step 2:}
Let us consider how to get the dimension $N_{\psi}$ of the dictionary $\vpsi$
by~\cref{prop:ineq_fininte_param}.
For any $\varepsilon ' > 0$, $N_{\psi}$ can be determined by $\{\vv_{i}\}_{i=1}^{n^*}$ which are the centers of $\{B_{i}\}_{i=1}^{n^*}$, such that
\begin{align}
    \label{ineq:fininte_param}
    \|\Kcal(\vv_{i})g_{j} -P_{N_{\psi}}\Kcal(\vv_{i}) g_{j} \|\leq \varepsilon '
\end{align}
for any $\vv_{i} \in \{\vv_{i}\}_{i=1}^{n^*}$ and all given $g_{j}$'s.

\textbf{Step 3:}
Follow step 1, for any $\uu \in B_{i}$ and all given $g_{j}$'s, the corresponding $P_{N_{\psi}}$ can give
\begin{align}
\label{ineq:uniform_continuity_pN}
\begin{split}
    \|P_{N_{\psi}}\Kcal(\vv_{i})g_{j} - P_{N_{\psi}}\Kcal(\uu)g_{j} \|
    &= \|P_{N_{\psi}}( \Kcal(\vv_{i})g_{j}- \Kcal(\uu)g_{j})\|\\
    &\leq \|P_{N_{\psi}}\|\|\Kcal(\vv_{i})g_{j} - \Kcal(\uu)g_{j}\|\\
    &\leq \varepsilon '
\end{split}
\end{align}
since $\|P_{N_{\psi}}\| \leq 1$.

\textbf{Step 4:}
We consider the analysis for any $\uu \in U$, then $\uu$ must belong to at least one of the neighbourhoods $\{B_{i}\}_{i=1}^{n^*}$. If $\uu \in B_{i}$, we consider (\ref{ineq:fininte_param}), (\ref{ineq:uniform_continuity}) and (\ref{ineq:uniform_continuity_pN}) and obtain
\begin{align}
\begin{split}
    &\| \Kcal(\uu)g_{j} - P_{N_{\psi}}\Kcal(\uu)g_{j}\|\\
    \leq&
    \|\Kcal(\uu)g_{j} - \Kcal(\vv_{i})g_{j}\|
    + \|\Kcal(\vv_{i})g_{j} - P_{N_{\psi}}\Kcal(\vv_{i})g_{j}\|
    + \| P_{N_{\psi}}\Kcal(\vv_{i})g_{j} - P_{N_{\psi}}\Kcal(\uu)g_{j}\|\\
    \leq&  \varepsilon,
\end{split}
\end{align}
when we set $\varepsilon ' = \frac{\varepsilon}{3}$.

\textbf{Step 5:}
Notice that for any $\uu \in U$, there exists a matrix $K(\uu) \in \R^{N_{\psi} \times N_{\psi}}$ such that
\begin{align}
P_{N_{\psi}}\Kcal(\uu)g_{j} = \va_{j}^{T}K(\uu)\vpsi ,
\end{align}
where $\vpsi$ is defined as discussed in the proof of~\cref{prop:ineq_fininte_param}.
Since $\Kcal(\cdot)g_{j}: U \to C_b(X)$ is continuous on $U$, then $\va_{j}^{T}K(\cdot)\vpsi: U \to C_b(X)$ is continuous on $U$ for $j = 1,2,\dots, N_{y}$. Thus, the entries in $K$ are continuous on $U$. Then we note that
$K: U \to \R^{N_{\psi}\times N_{\psi}}$ is a continuous function with respect to Frobenius norm and
\begin{align}
    \| \Kcal(\uu)g_{j} -\va_{j}^{T}K(\uu)\vpsi \| =
    \| \Kcal(\uu)g_{j} - P_{N_{\psi}}\Kcal(\uu)g_{j}\| \leq \varepsilon.
\end{align}
\end{proof}

\section{\texorpdfstring{Neural network structure of \( K(u) \)}{Neural network structure of K(u)}}

\label{sec:K_structure}

In this section, we explore the relationship between the number of nodes in the last hidden layer of neural network structure for $K(\uu)$ and the rank of matrix $C$. The structure of $K(\uu)$ is illustrated in~\cref{fig:K_structure} and the matrix $C$ is discussed in~\cref{sec:application_analysis}. We explain this relationship by focusing on the KdV case detailed in~\cref{sec:control}. In our setting, $d=N_{\psi}$. The output layer in our neural network configuration for $K(\uu)$ is a dense layer that includes a bias term. Therefore, if $N_{(\text{last hidden})}$ is the number of nodes in the last hidden layer, $N_{K}$ should be $N_{(\text{last hidden})}+1$. We can regard the flattened matrix $K(\uu)$ as the vector obtained prior to the ``reshape'' operation. Each element in this vector is computed by $N_{K}$ basis functions. Consequently, the rank of matrix $C$ is determined by $\min\{N_{K}, d^{2}\}$, always equal to or less than this number.
For the case we introduce in~\cref{sec:control}, $d = 6$ and there exists 6 constants in the first row of $K(\uu)$, neural networks are used to generated the other entries in $K(\uu)$. The rank of matrix $C$ is less than or equal to $\min\{N_{K}, d*(d-1)\}$,

\begin{figure}[htb!]
	\centering
	\includegraphics[width=0.8\textwidth]{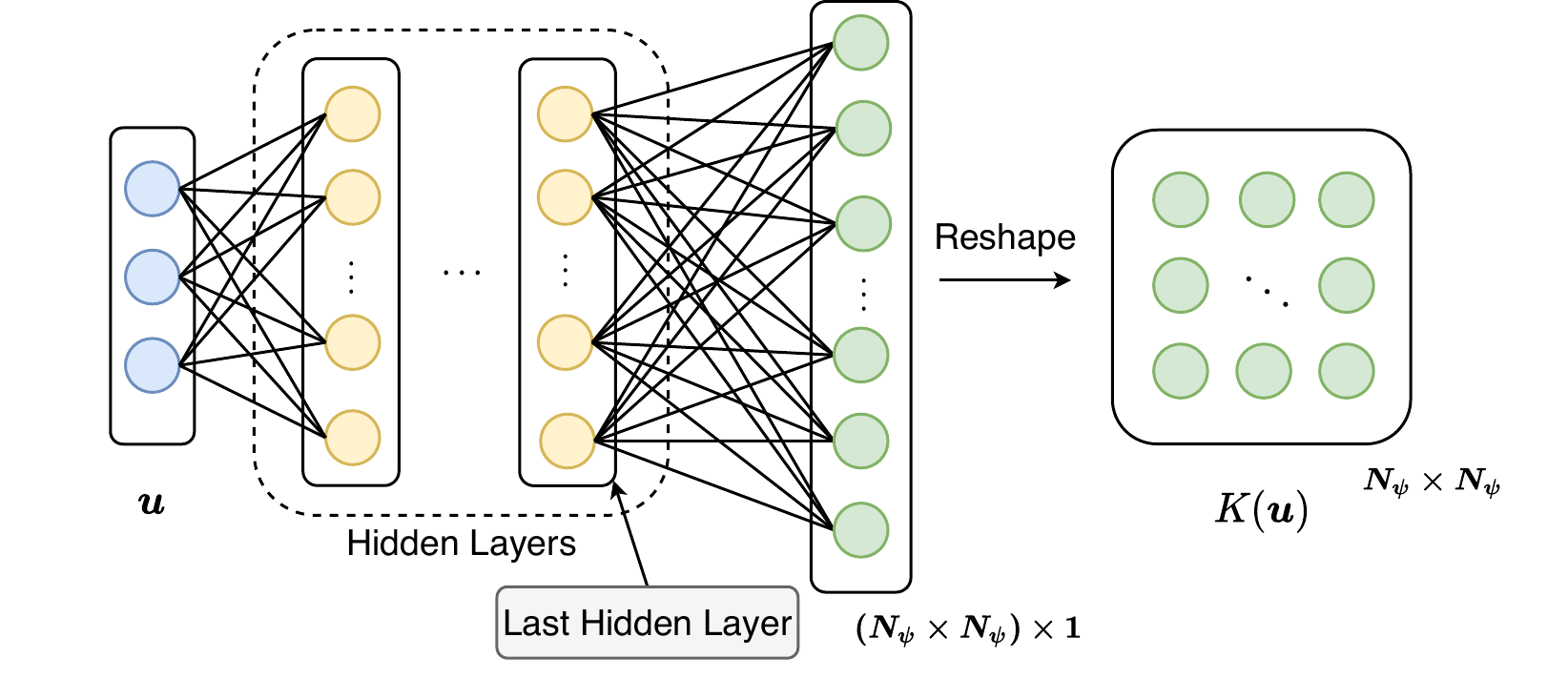}
	\caption{The Neural Networks architecture of $K(\uu)$.}
	\label{fig:K_structure}
\end{figure}

\section{Analysis on Van der Pol Matheiu case with low non-linearity}
\label{sec:vdpm_mu0}
In the case where the parameter $\mu = 0$, the PK-NN model does not perform as well as models M2 and M3. However, the structure of PK-NN can encompass the capabilities of both M2 and M3 models. To understand the reason why PK-NN is outperformed, its structure is set to include a ``hardtanh'' (originally ``tanh'' in the experiment before) activation function in the hidden layer, and it initially adopts the same weights and biases for the output layer as the successful model M2. The initial setup is slightly modified (perturbed) from this configuration and \pk \ is retrained. The outcomes reveal that the performance of PK-NN, which is indicated by a purple curve becomes similar to that of M2 with this precise initialization. Additionally, the blue curve shows the performance of PK-NN with the same ``hardtanh'' activation function but starting from a random initialization, highlighting that PK-NN struggles to learn effectively from scratch but can achieve good results with a suitable initialization. The results are shown in~\cref{fig:vdp_mu0}. This finding suggests that the primary challenge with PK-NN is its trainability rather than its fundamental design.

\begin{figure}[H]
    \centering
    \begin{subfigure}{0.48\textwidth}
    \centering
    \includegraphics[width=1.0\textwidth]{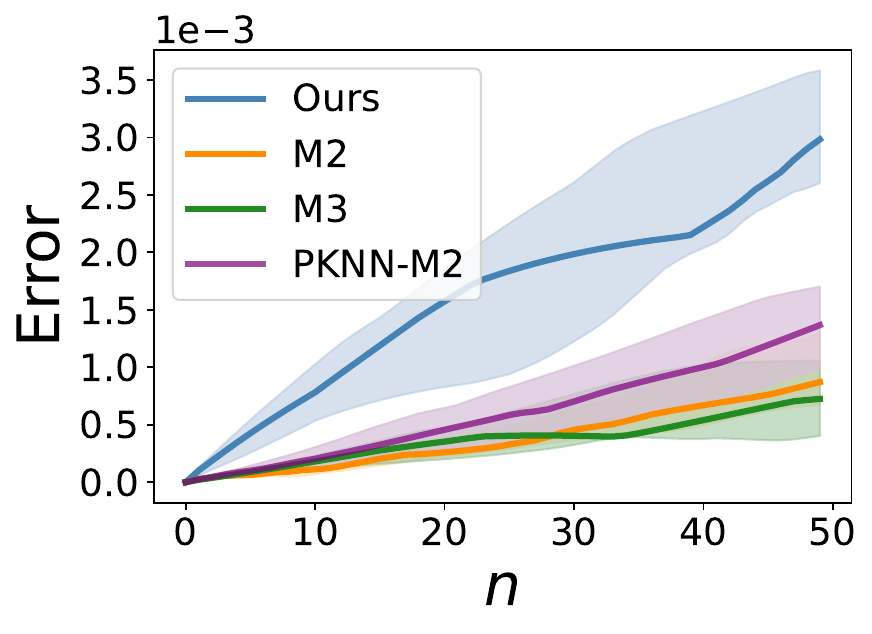}
    \caption{Error.}
    \label{fig:error_pknn_linear_perturb}
    \end{subfigure}
    \begin{subfigure}{0.48\textwidth}
        \centering
        \includegraphics[width=1.0\textwidth]{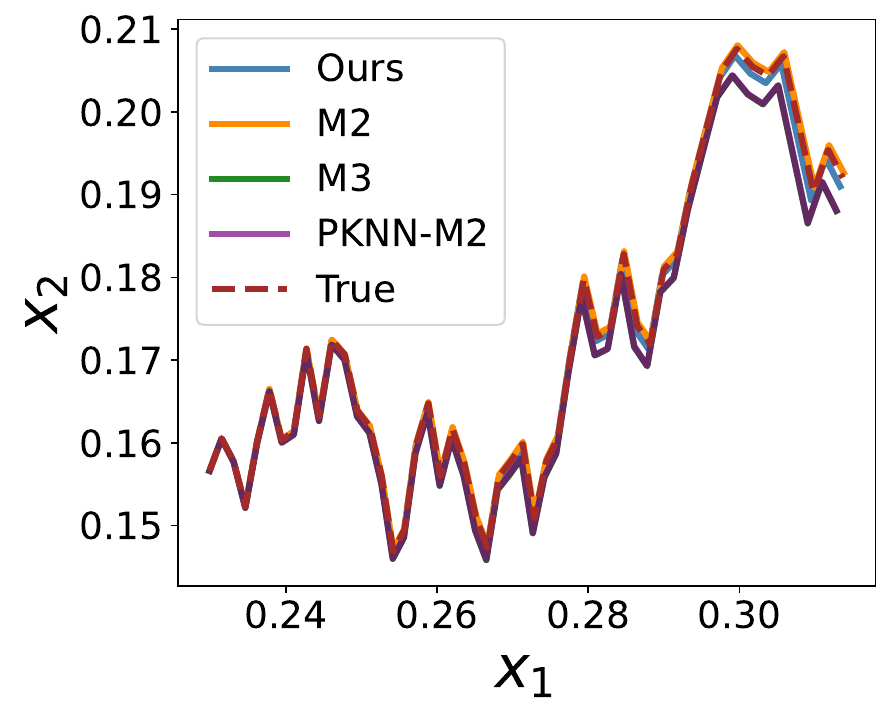}
    \caption{Trajectory.}
    \label{fig:traj_pknn_linear_perturb}
    \end{subfigure}
    \caption{The performance of PK-NN with perturbation on a precise initialization, which is the same as M2 setting, shown as PKNN-M2.}
    \label{fig:vdp_mu0}
\end{figure}

\section{Trajectory results in original space for the FitzHugh-Nagumo system}
\label{sec:fhn_traj}
Here are the results about trajectories on the original two-dimensional space of FitzHugh-Nagumo system and the difference between the prediction and the ground truth. The results in~\cref{fig:fhn_dim_10_traj_diff} is about the case with $N_{x} = 10$ and dim $\uu = 1$.
\cref{fig:fhn_dim_10_high_u_traj_diff} and~\cref{fig:fhn_dim_100_traj_diff} show for different spatial discretisations ($N_{x} = 100$, dim $\uu = 1$) and control dimensions($N_{x} = 10$, dim $\uu = 3$). We can see the performance among PK-NN, M4-RBF and M4-NN directly. Our \pk \ outperforms the other two models in all cases.

\begin{figure}[H]
    \centering
    \begin{subfigure}{1.0\textwidth}
    \centering
    \includegraphics[width=1.0\textwidth]{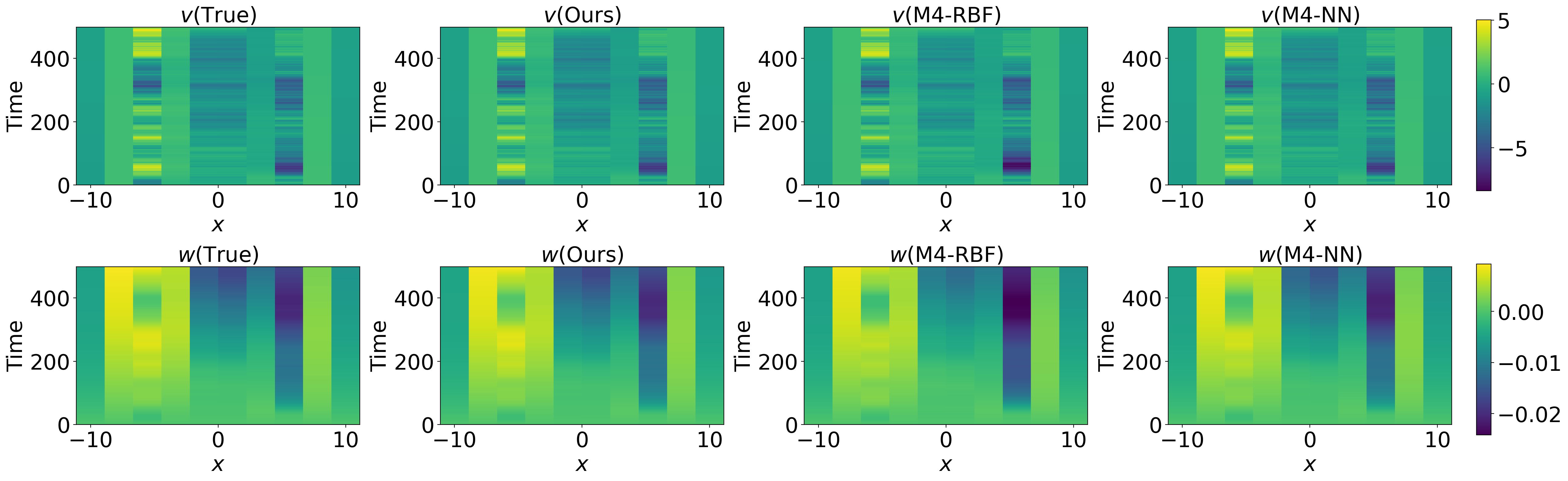}
        \caption{Trajectory.}
        \label{fig:dim_10_high_u_fhn_traj}
    \end{subfigure}
    \centering
    \begin{subfigure}{1.0\textwidth}
    \centering
    \includegraphics[width=1.0\textwidth]{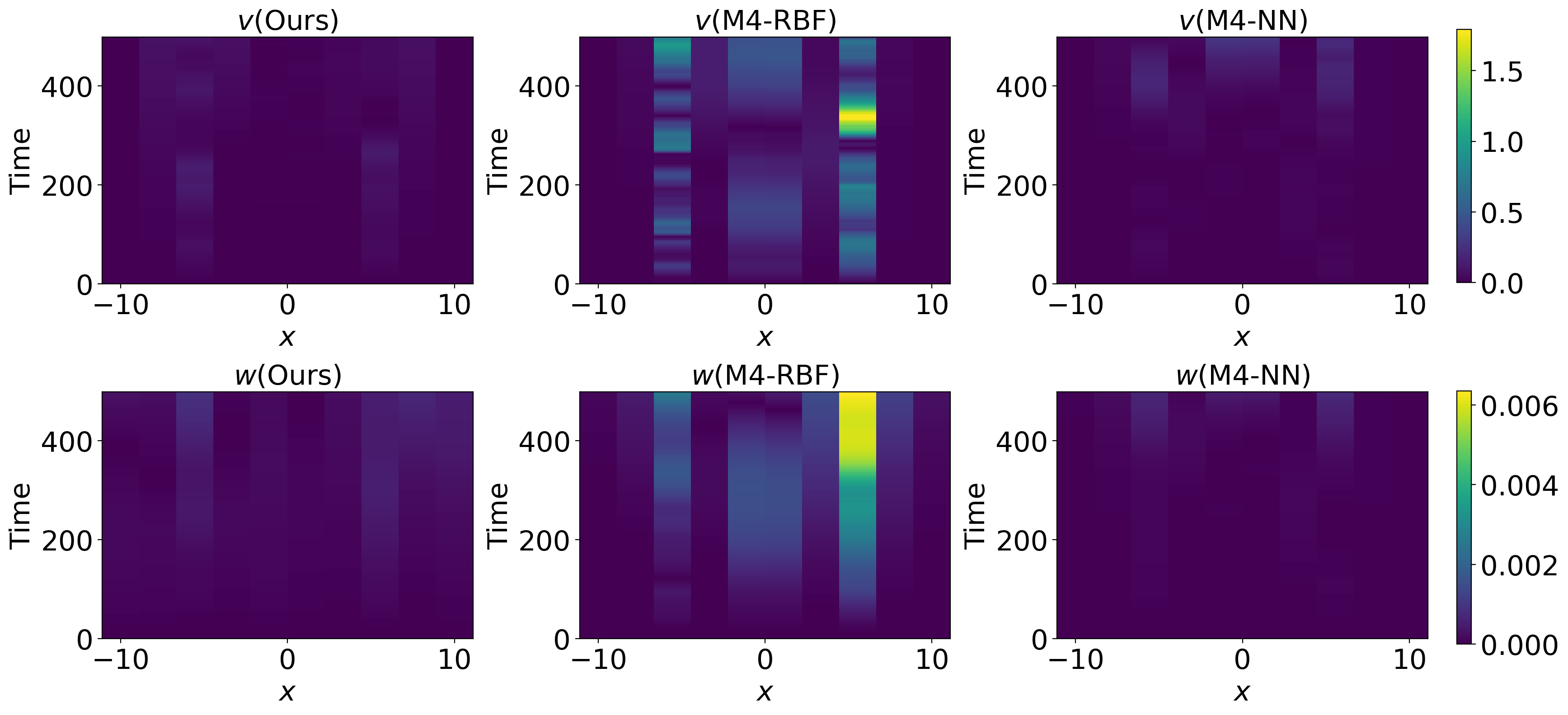}
        \caption{Difference.}
        \label{fig:dim_10_high_u_fhn_diff}
    \end{subfigure}
    \caption{(a): Predictions by PK-NN, M4-RBF and M4-NN on FHN system with $N_{\psi}=10$ and dim $\uu = 3$. (b): The absolute value of the difference between the forecasted outcomes and the ground truth.}
    \label{fig:fhn_dim_10_high_u_traj_diff}
\end{figure}

\begin{figure}[H]
    \centering
    \begin{subfigure}{1.0\textwidth}
    \centering
    \includegraphics[width=1.0\textwidth]{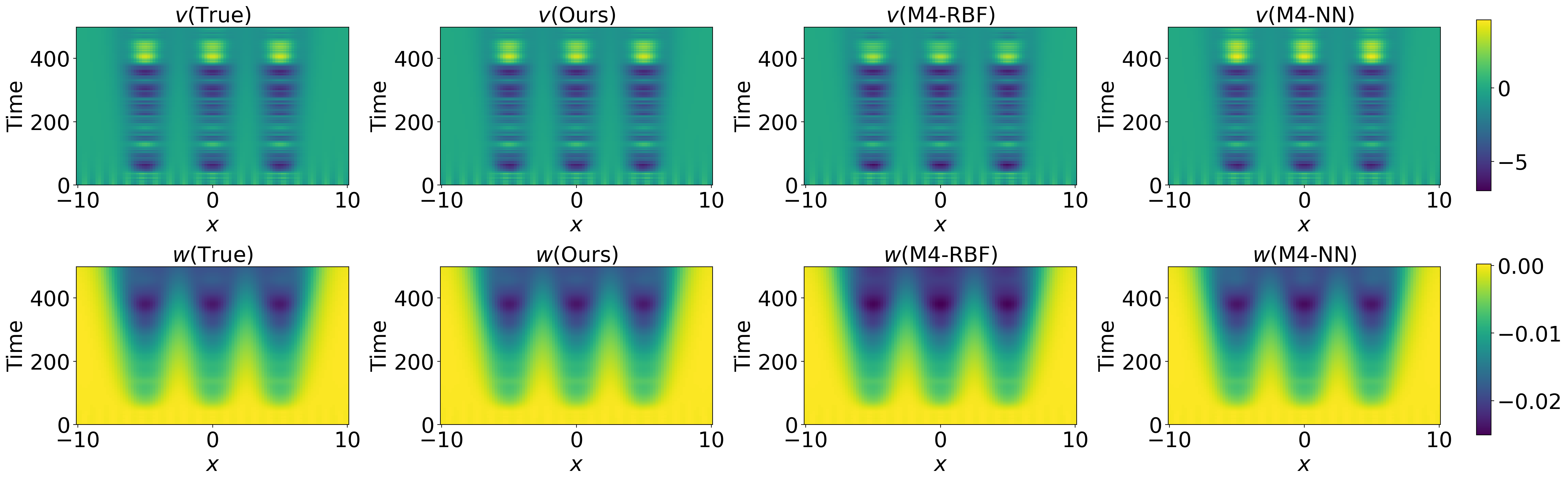}
        \caption{Trajectory.}
        \label{fig:dim_100_fhn_traj}
    \end{subfigure}
    \centering
    \begin{subfigure}{1.0\textwidth}
    \centering
    \includegraphics[width=1.0\textwidth]{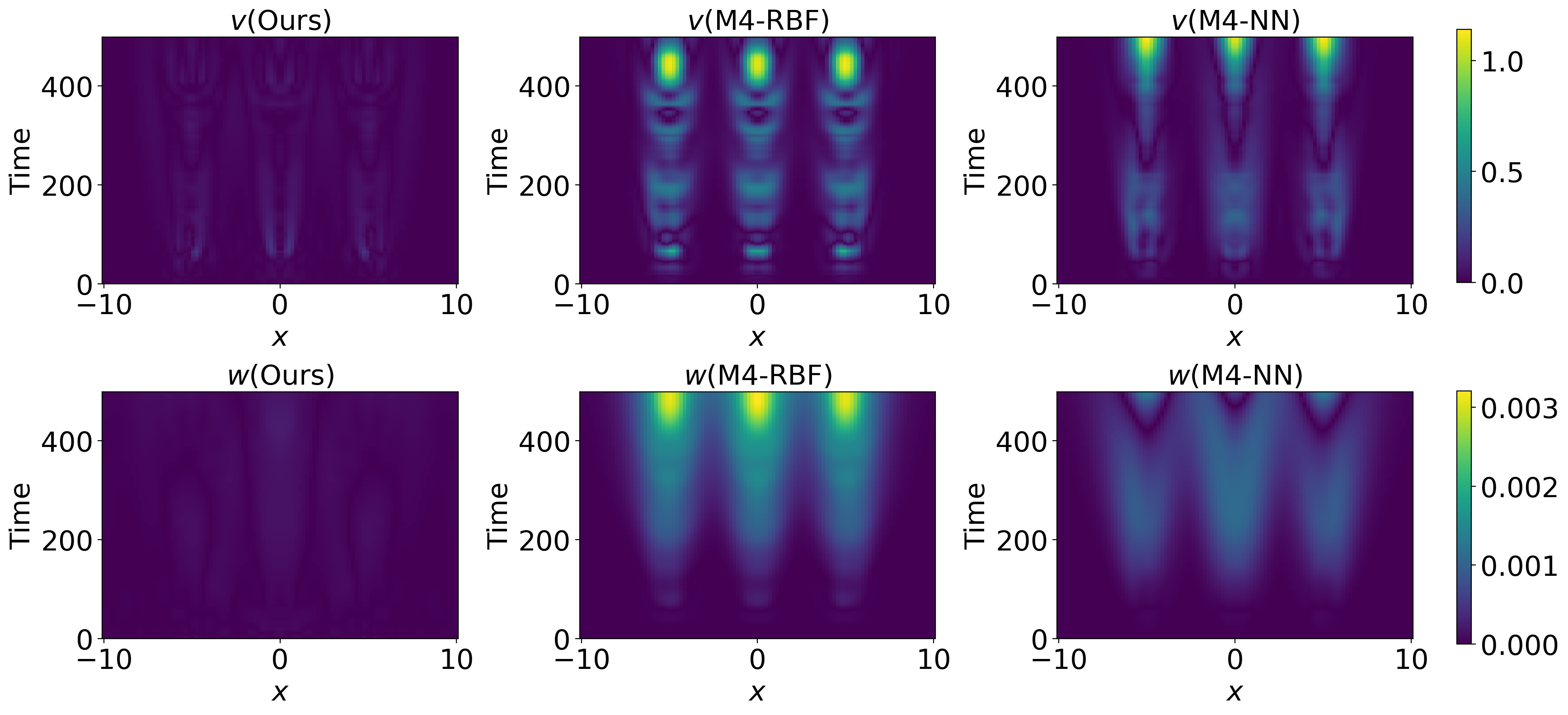}
        \caption{Difference.}
        \label{fig:dim_100_fhn_diff}
    \end{subfigure}
    \caption{(a): Predictions by PK-NN, M4-RBF and M4-NN on FHN system with $N_{\psi}=100$ and dim $\uu = 1$. (b): The absolute value of the difference between the forecasted outcomes and the ground truth.}
    \label{fig:fhn_dim_100_traj_diff}
\end{figure}

\section{Latent space evolution of the Korteweg-De Vries system}
We show the evolution on the latent Koopman space of \pk \ for the Korteweg-De Vries (KdV) system in~\cref{fig:kdv_latent_each_componet}. The solution of KdV equation is generated by a sequence of control $u_i$'s. Controls are applied on the trained \pk, the solution is evolved by it as shown in~\cref{fig:kdv_latent_info} and the predicted mass and momentum are shown at the bottom. The evolution of entries in $K(\uu)$ as $\uu$ changes is shown in~\cref{fig:kdv_latent_each_K}. 
The evolution of ``mass'' mainly depends on the constant term, ``mass'' itself and $NN2$, multiplied by the second row's first, second and fifth element in $K(\uu)$. The multiplications of the entry in $K(\uu)$ and its corresponding element in the dictionary are over $10^{-4}$ for these three components, larger than for others. The evolution of ``momentum'' mainly depends on the components of the latent space except ``mass'' since only the multiplication of the second element of the third row of $K(\uu)$ and ``mass'' is around $10^{-6}$.
We also find that the evolution of $NN1$ and $NN2$ have a linear relationship in~\cref{fig:kdv_latent_NN1_NN2}, which inspires us to figure out how to reduce the redundant ones in the dictionary.


\label{sec:kdv_latent}
\begin{figure}[H]
	\centering
	\includegraphics[width=1.0\textwidth]{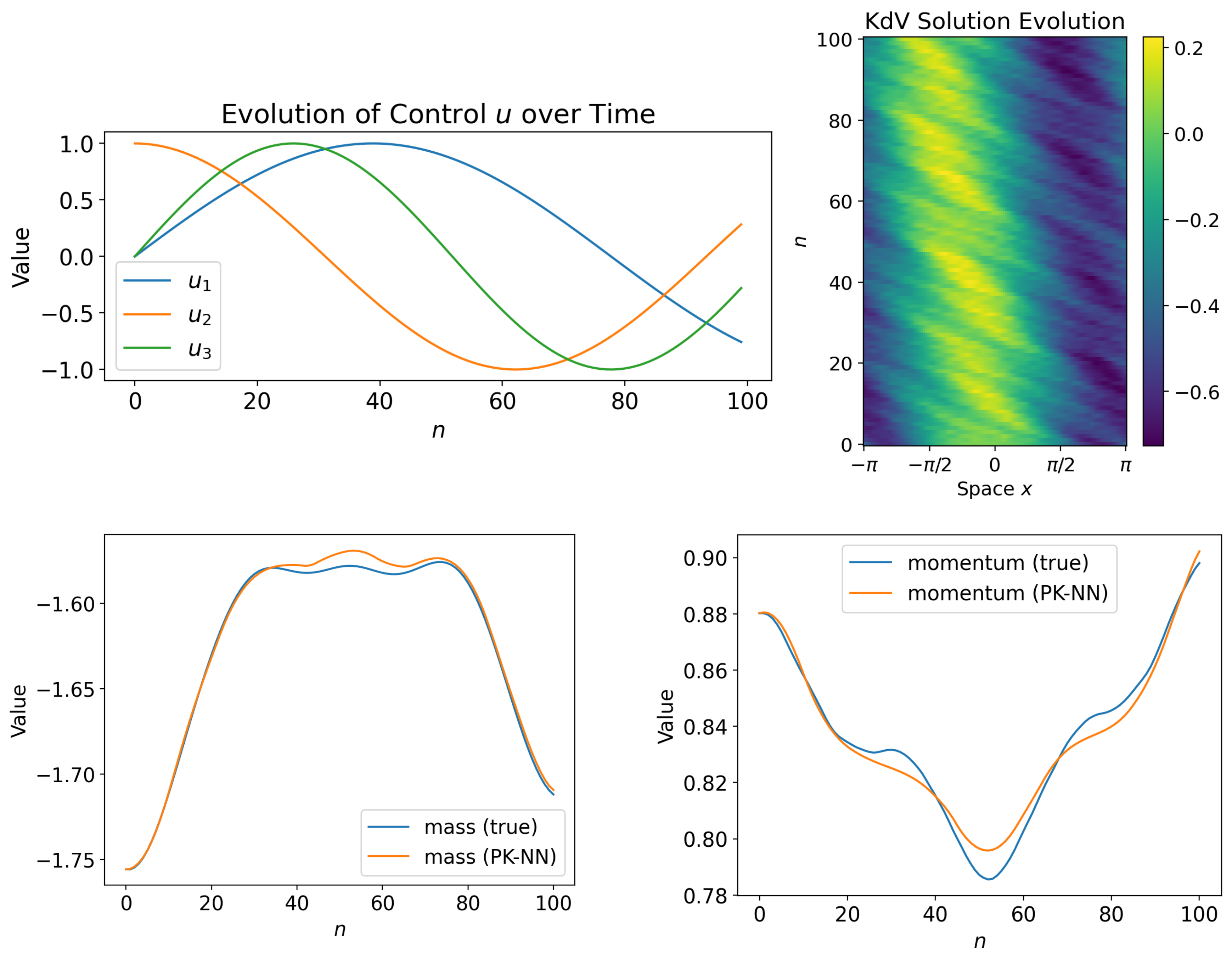}
	\caption{The solution of KdV equation is generated by a sequence of control $u_i$'s. Controls are applied on the trained PK-NN model and the latent space is evolved by the Koopman operator. The predicted mass and momentum are shown at the bottom.}
	\label{fig:kdv_latent_info}
\end{figure}

\begin{figure}[H]
	\centering
	\includegraphics[width=1.0\textwidth]{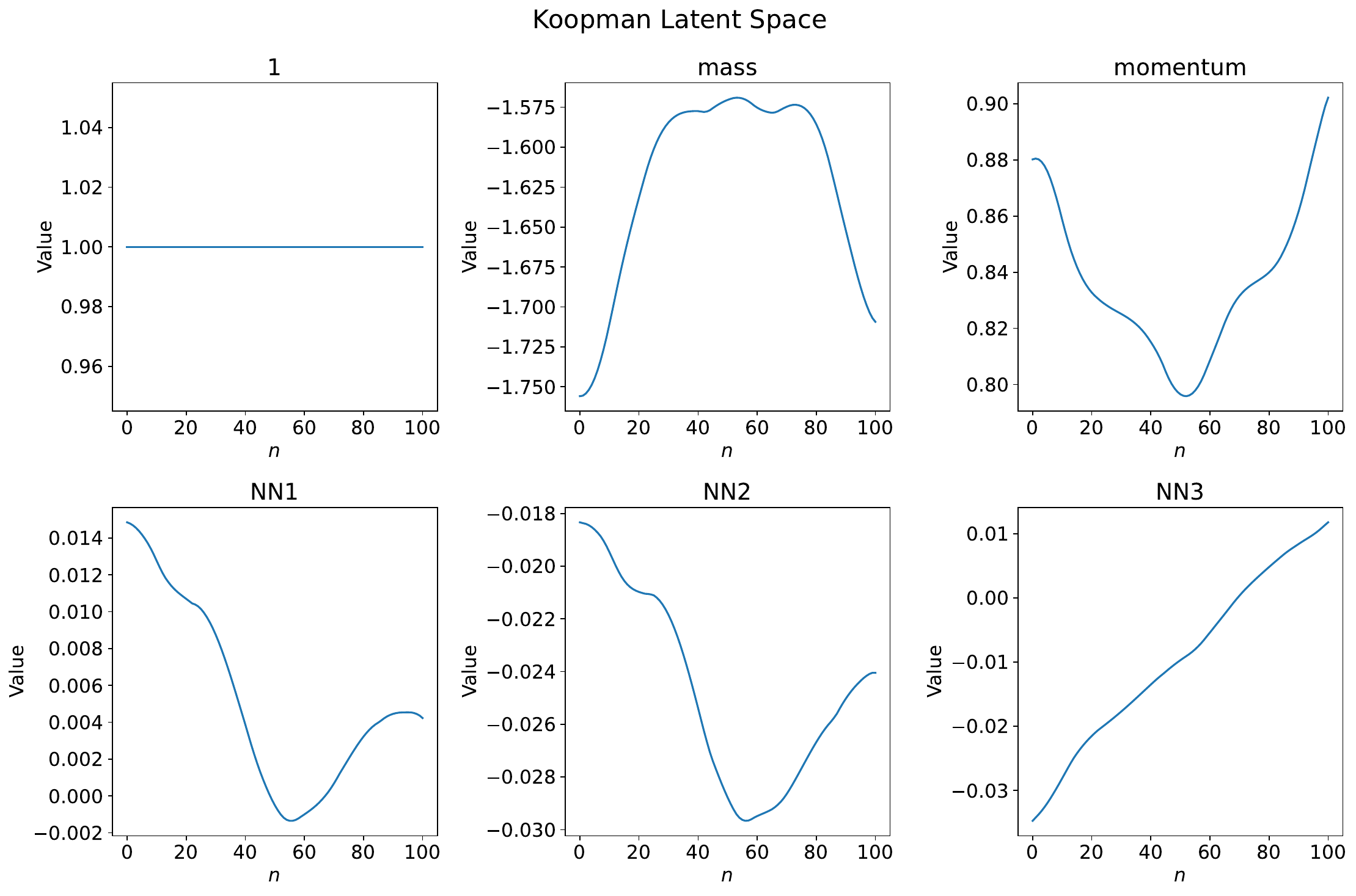}
	\caption{Evolution on the latent space for the Korteweg-De Vries System. The ``mass'' and ``momentum'' are target observables. $NN_{i}, i = 1,2,3$ are trainable components in the dictionary.}
	\label{fig:kdv_latent_each_componet}
\end{figure}

\begin{figure}[H]
	\centering
	\includegraphics[width=1.0\textwidth]{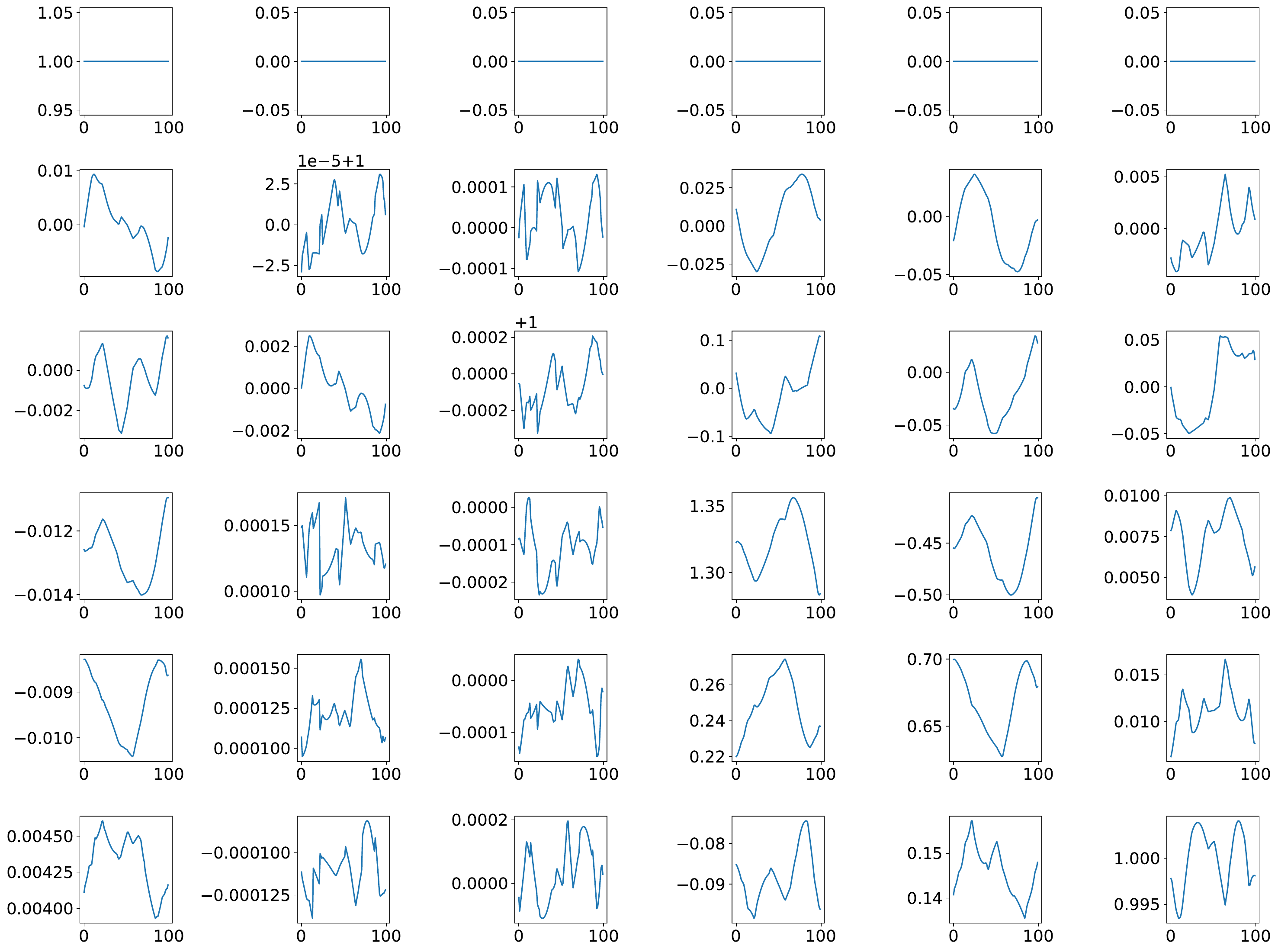}
	\caption{The evolution of entries in $K(\uu)$ as $\uu$ changes. The first row is set as $(1,0,\dots,0)^{T}$ and others are trainable. In this matrix, the diagonal elements corresponding to the second and third rows are optimized to approximate values close to one, a configuration that aligns well with predictions related to mass and momentum. The forecast for the subsequent step is primarily influenced by the current state, along with some other non-linear terms about $\uu$ and $\x$.}
	\label{fig:kdv_latent_each_K}
\end{figure}

\begin{figure}[H]
    \centering
    \includegraphics[width=0.8\textwidth]{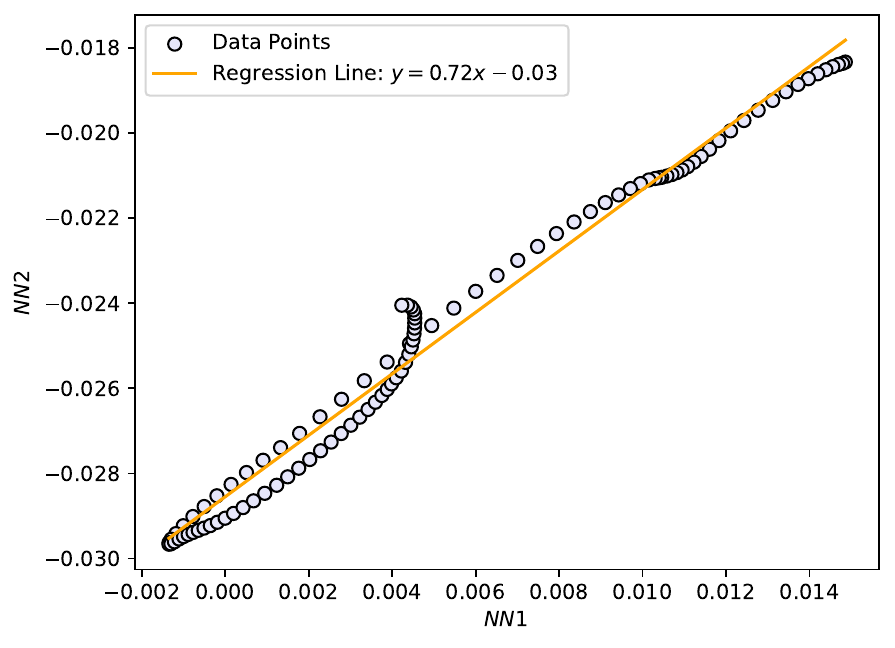}
    \caption{The relationship between $NN1$ and $NN2$.}
    \label{fig:kdv_latent_NN1_NN2}
\end{figure}

\section{Prediction results for mass and momentum of Korteweg-De Vries system}
\label{sec:kdv_mass_momentum_pred}
We compare the predictions of mass and momentum for KdV system by \pk, linear (M2) and bi-linear (M3) in~\cref{fig:kdv_mass_momentum_pred}.
Using~\cref{alg:prediction} and the metric defined in~\cref{eq:rel_error}, 
we compute the relative error based on five trajectories with different initial conditions and plot the corresponding error bars. Observing over 10 time steps, which matches the horizon $\tau$ in MPC, we find that \pk \ outperforms the other two models in both mass and momentum predictions.
This demonstrates that \pk \ can predict the observables $m_{i+1}$ in~\cref{opt_mpc} more accurately, laying a strong foundation for addressing the control problems.

\begin{figure}[H]
    \centering
    \begin{subfigure}{0.48\textwidth}
    \centering
    \includegraphics[width=1.0\textwidth]{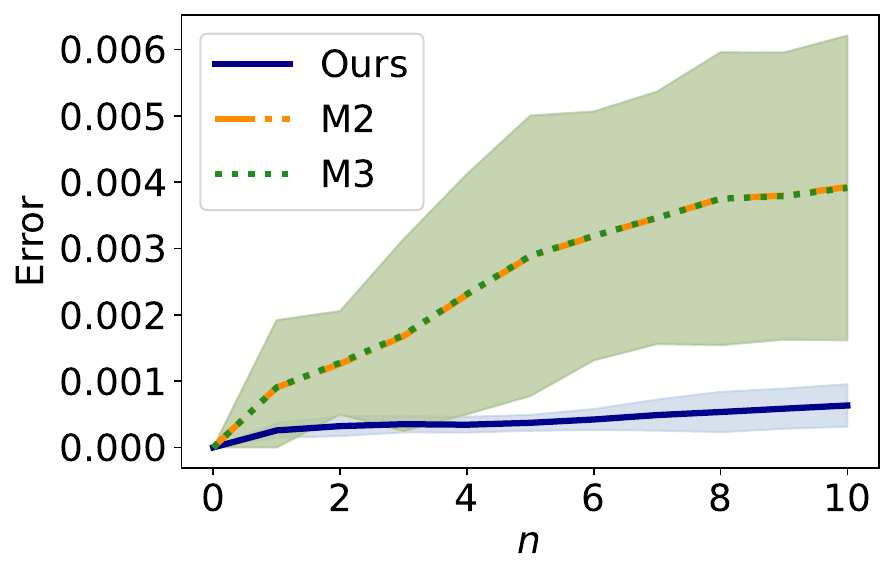}
    \caption{Mass.}
    \label{fig:kdv_mass_pred}
    \end{subfigure}
    \begin{subfigure}{0.48\textwidth}
        \centering
        \includegraphics[width=1.0\textwidth]{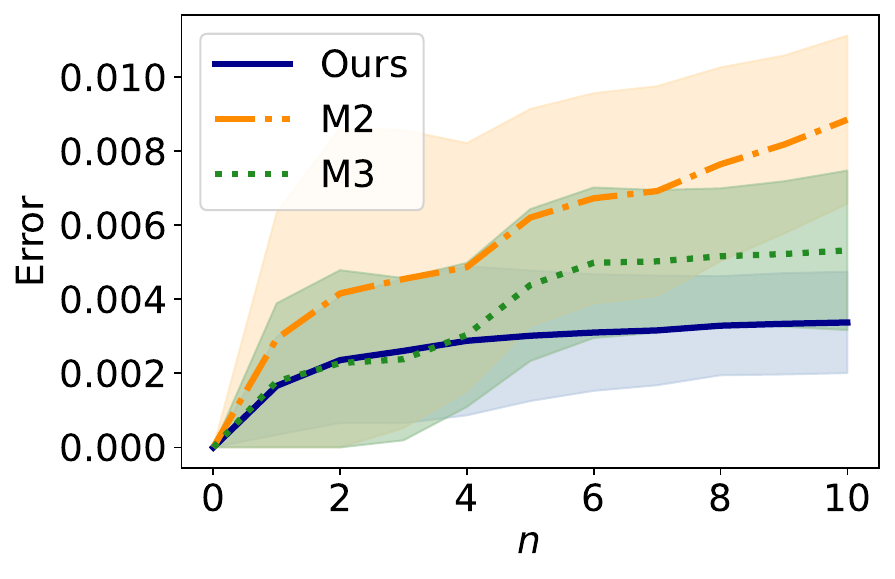}
    \caption{Momentum.}
    \label{fig:kdv_momentum_pred}
    \end{subfigure}
    \caption{The prediction results for mass and momentum of KdV system by \pk, M2 and M3.}
    \label{fig:kdv_mass_momentum_pred}
\end{figure}

\section{\pk \ hyperparameter selection via cross-validation} 
\label{sec:hyperparam_selection}
In this section, we provide a detailed explanation of the hyperparameter selection process for \pk.
We employ cross-validation with grid search to assess the model's performance across various hyperparameters and train models with the same training strategies, including rules for learning rate decay and saving checkpoint criteria. We have a comprehensive explanation of how we choose the hyperparameters. The evaluation focuses on different aspects for each system:
\begin{itemize}
    \item 
    To investigate the impact of dictionary size on model performance, we employ the parametric Duffing system as a case study. It is generally presumed that a sufficiently large dictionary is crucial for accurately capturing the dynamics of the system. Nonetheless, training becomes increasingly challenging with larger dictionary sizes. We assessed the model's performance using various dictionary sizes 5, 10, 15, 20, and 25, and observe improvements in performance with each increment in dictionary size. Consequently, we selected a dictionary size of 25 for our experimental framework The results are shown in~\cref{table:cv_duffing}.
    \item To explore the effect of the number of hidden nodes in $K(\uu)$ on performance, we choose the Van der Pol Mathieu system as an illustrative example, which has strong non-linearity on $\uu$ and $\x$. We evaluate the performance of the model  across various configurations of hidden nodes $[32]$, $[64]$, $[128]$, $[32,32]$, $[64,64]$ and $[128,128]$. We find that two hidden layers do not enhance performance in comparison to a single-layer setup. Consequently, we choose single layer with 128 hidden nodes in our experiments, as this configuration achieves an error magnitude satisfactorily low, around $10^{-9}$. The cross-validatoin results are shown in~\cref{table:cv_vdpm}.
    \item To show the effectiveness in situations involving high-dimensional state spaces, we assess the influence of varying numbers of hidden nodes in the dictionary neural network on the FitzHugh-Nagumo system. Our analysis reveal that the performance of models exhibits sensitivity to changes in the number of hidden nodes, with configurations tested including $[32]$, $[64]$, $[128]$, $[32,32]$, $[64,64]$, and $[128,128]$. Based on our findings, we select a configuration with two hidden layers, each comprising 128 nodes. This configuration leads to a significant improvement in error, around $10^{-7}$, detailed in~\cref{table:cv_fhn}.
\end{itemize}
By evaluating the effect of hyperparameters on neural networks, it often emerges that opting for one or two hidden layers, each with 128 nodes, within the dictionary neural network presents an optimal strategy. Similarly, for the $K(\uu)$ architecture, selecting one or two hidden layers with a width of 128 tends to be a suitable choice. For the activation functions in $K(\uu)$ hidden layers, ``tanh'' works more effectively than ``ReLU'' if the depth is one or two. Regarding the size of the dictionary, our recommendation leans towards choosing a larger size. However, it's important to consider that a larger dictionary results in a more complex $K(\uu)$ matrix, which leads to higher computational demands and greater challenges in training. Therefore, the decision on hyperparameters should be balanced with the performance and the computational capabilities available.

\begin{table}[H]
    \centering
    \footnotesize
    \begin{tabular}{c|ccc}
        \toprule
        \diagbox[dir=NW]{$N_{\psi}$}{Task} & $\nu_1 = 1000, \nu_2 = 10$ & $\nu_1 = 500, \nu_2 = 20$ & $\nu_1 = 100, \nu_2 = 100$ \\
        \midrule
        5 & $1.25 \times 10^{-4}$ & $3.83 \times 10^{-5}$ &$4.74 \times 10^{-4}$  \\
        10 &  $6.46 \times 10^{-5}$ & $1.09 \times 10^{-5}$ &$3.48 \times 10^{-4}$ \\
        15& $4.48 \times 10^{-5}$ & $8.11 \times 10^{-6}$ &$1.29 \times 10^{-4}$\\
        20& $2.10 \times 10^{-5}$ & $4.35 \times 10^{-6}$ &$1.12 \times 10^{-4}$\\
        25& $1.42 \times 10^{-5}$ & $4.68 \times 10^{-6}$ &$1.01 \times 10^{-4}$\\
        \bottomrule
    \end{tabular}
    \caption{Cross-validation results for the case of parametric duffing system.}
    \label{table:cv_duffing}
\end{table}

\begin{table}[H]
    \centering
    \footnotesize
    \begin{tabular}{c|ccccc}
        \toprule
        \diagbox[dir=NW]{$K(\uu)$}{Task} & $\mu=0$ & $\mu=1$ & $\mu=2$ & $\mu=3$ & $\mu=4$ \\
        \midrule
        $[32]$ 
        & $1.08 \times 10^{-8}$
        & $1.34 \times 10^{-8}$
        & $1.27 \times 10^{-8}$
        & $1.76 \times 10^{-8}$
        & $2.26 \times 10^{-8}$  \\
        $[64]$ 
        & $7.12 \times 10^{-9}$
        & $8.12 \times 10^{-9}$
        & $9.28 \times 10^{-9}$
        & $1.09 \times 10^{-8}$
        & $1.98 \times 10^{-8}$  \\
        $[128]$
        & $6.23 \times 10^{-9}$
        & $7.02 \times 10^{-9}$
        & $8.31 \times 10^{-9}$
        & $1.47 \times 10^{-8}$
        & $1.15 \times 10^{-8}$ \\
        $[32,32]$
        & $6.05 \times 10^{-9}$
        & $7.91 \times 10^{-9}$ 
        & $8.43 \times 10^{-9}$
        & $1.49 \times 10^{-8}$
        & $2.71 \times 10^{-8}$ \\
        $[64,64]$ 
        & $6.75 \times 10^{-9}$ 
        & $6.48 \times 10^{-9}$
        & $7.07 \times 10^{-9}$
        & $1.36 \times 10^{-8}$
        & $2.26 \times 10^{-8}$ \\
        $[128,128]$ 
        & $8.28 \times 10^{-9}$
        & $8.80 \times 10^{-9}$
        & $8.29 \times 10^{-9}$
        & $1.36 \times 10^{-8}$ 
        & $2.01 \times 10^{-8}$\\
        \bottomrule
    \end{tabular}
    \caption{Cross-validation results for the case of Van der Pol Matheiu system.}
    \label{table:cv_vdpm}
\end{table}

\begin{table}[H]
    \centering
    \footnotesize
    \begin{tabular}{c|cccccc}
        \toprule
        $\vpsi(\x)$ & $[32]$ & $[64]$ & $[128]$ & $[32,32]$ & $[64,64]$ & $[128,128]$ \\
        \midrule
        Loss
         & $1.82 \times 10^{-6}$ 
         & $6.66 \times 10^{-7}$ 
         & $2.47 \times 10^{-7}$  
         & $7.77 \times 10^{-7}$ 
         & $3.41 \times 10^{-7}$  
         & $1.45 \times 10^{-7}$  \\
        \bottomrule
    \end{tabular}
    \caption{Cross-validation results for the case of FitzHugh-Nagumo system $(N_x  =100, \text{dim} \ \uu = 1)$.}
    \label{table:cv_fhn}
\end{table}
        
\section{Comparison about the computational cost of optimal control problems}
\label{sec:kdv_computational_cost}
We examine the computational costs associated with solving optimal control problems for the Korteweg-De Vries (KdV) system. We consider the tracking momentum problem, with a focus on a system influenced by non-linear (sin) forcing, setting $\lambda=0.005$ as an example. A key strategy in our optimization process is to initialize the parameters to be optimized using the outcomes from the previous step. This iterative initialization significantly decreases the optimization time, particularly when consecutive control values are closely related. 
We show both the optimization time and the iteration count for each step, as detailed in~\cref{fig:kdv_computational_cost}. Our findings indicate that the computational demands of the PK-NN model are comparably higher than those of other models. 
The iteration times for models M2 (linear) and M3 (bilinear) are around 1 second, whereas for the PK-NN model, they vary between 1 and 8 seconds. The optimization averages 5.82 seconds for PK-NN, 0.83 seconds for M2, and 1.69 seconds for M3. Consequently, with the same optimizer in use, the PK-NN model requires 3 to 7 times more time to complete each iteration compared to the other two methods.

\begin{figure}[htb!]
    \centering
    \begin{subfigure}{0.9\textwidth}
    \centering
    \includegraphics[width=0.9\textwidth]{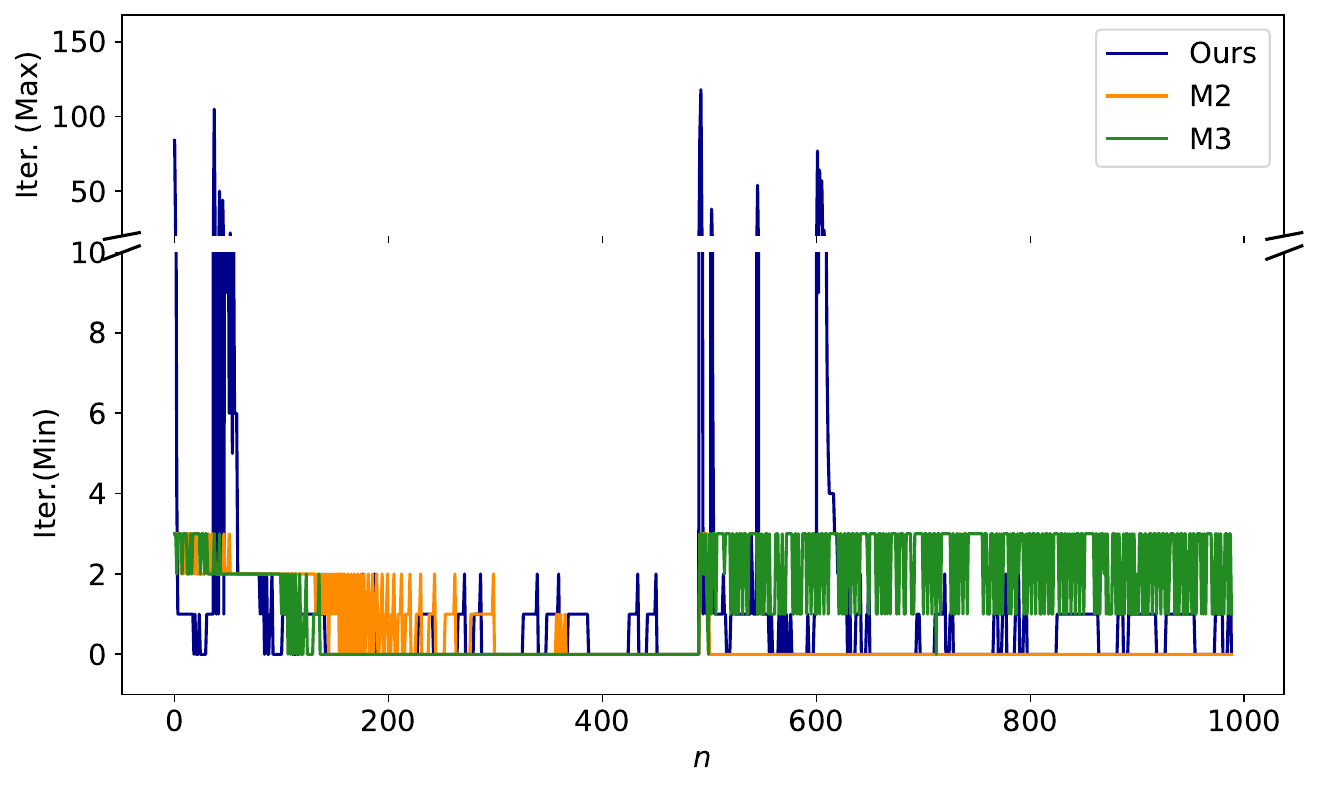}
        \caption{The number of iterations for each optimization.}
        \label{fig:opt_iter}
    \end{subfigure}
    \centering
    \begin{subfigure}{0.9\textwidth}
    \centering
    \includegraphics[width=0.9\textwidth]{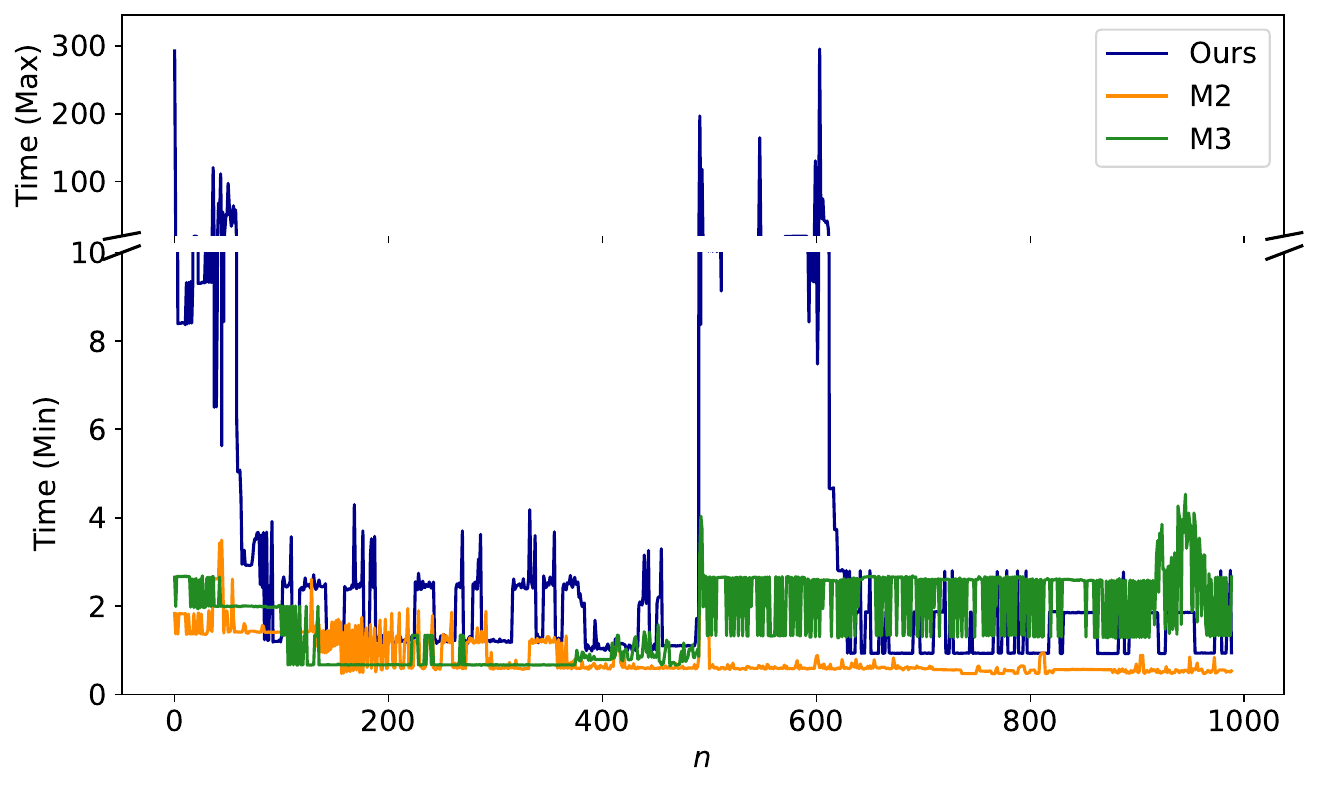}
        \caption{Cost of time for each optimization.}
        \label{fig:opt_t_list}
    \end{subfigure}
    \caption{Computational cost of the optimization in KdV tracking problem by \pk, M2 and M3.}
    \label{fig:kdv_computational_cost}
\end{figure}


\begin{thebibliography}{10}

\bibitem{andreuzzi2023dynamic}
{\sc F.~Andreuzzi, N.~Demo, and G.~Rozza}, {\em A dynamic mode decomposition extension for the forecasting of parametric dynamical systems}, SIAM Journal on Applied Dynamical Systems, 22 (2023), pp.~2432--2458.

\bibitem{angeli1999forward}
{\sc D.~Angeli and E.~D. Sontag}, {\em Forward completeness, unboundedness observability, and their lyapunov characterizations}, Systems \& Control Letters, 38 (1999), pp.~209--217.

\bibitem{arbabi2018data}
{\sc H.~Arbabi, M.~Korda, and I.~Mezi{\'c}}, {\em {A data-driven Koopman model predictive control framework for nonlinear partial differential equations}}, in 2018 IEEE Conference on Decision and Control (CDC), IEEE, 2018, pp.~6409--6414.

\bibitem{Banks2023}
{\sc M.~J. Banks}, {\em Koopman Representations in Control}, PhD thesis, UC Santa Barbara, 2023.
\newblock ProQuest ID: Banks\_ucsb\_0035D\_16027. Merritt ID: ark:/13030/m5tj9f3g. Retrieved from \url{https://escholarship.org/uc/item/1gg8s7km}.

\bibitem{bogacki19893}
{\sc P.~Bogacki and L.~F. Shampine}, {\em A 3 (2) pair of runge-kutta formulas}, Applied Mathematics Letters, 2 (1989), pp.~321--325.

\bibitem{bruder2021koopman}
{\sc D.~Bruder, X.~Fu, R.~B. Gillespie, C.~D. Remy, and R.~Vasudevan}, {\em Koopman-based control of a soft continuum manipulator under variable loading conditions}, IEEE Robotics and Automation Letters, 6 (2021), pp.~6852--6859.

\bibitem{bruder2021advantages}
{\sc D.~Bruder, X.~Fu, and R.~Vasudevan}, {\em {Advantages of bilinear Koopman realizations for the modeling and control of systems with unknown dynamics}}, IEEE Robotics and Automation Letters, 6 (2021), pp.~4369--4376.

\bibitem{brunton2021modern}
{\sc S.~L. Brunton, M.~Budi{\v{s}}i{\'c}, E.~Kaiser, and J.~N. Kutz}, {\em {Modern Koopman theory for dynamical systems}}, arXiv preprint arXiv:2102.12086,  (2021).

\bibitem{budivsic2012applied}
{\sc M.~Budi{\v{s}}i{\'c}, R.~Mohr, and I.~Mezi{\'c}}, {\em {Applied Koopmanism}}, Chaos: An Interdisciplinary Journal of Nonlinear Science, 22 (2012), p.~047510.

\bibitem{chiuso2019system}
{\sc A.~Chiuso and G.~Pillonetto}, {\em System identification: A machine learning perspective}, Annual Review of Control, Robotics, and Autonomous Systems, 2 (2019), pp.~281--304.

\bibitem{chow1940systeme}
{\sc W.-L. Chow}, {\em {\"U}ber systeme von liearren partiellen differentialgleichungen erster ordnung}, Mathematische Annalen, 117 (1940), pp.~98--105.

\bibitem{cibulka2022dictionary}
{\sc V.~Cibulka, M.~Korda, and T.~Hani{\v{s}}}, {\em {Dictionary-free Koopman model predictive control with nonlinear input transformation}}, arXiv preprint arXiv:2212.13828,  (2022).

\bibitem{erichson2019physics}
{\sc N.~B. Erichson, M.~Muehlebach, and M.~W. Mahoney}, {\em Physics-informed autoencoders for lyapunov-stable fluid flow prediction}, arXiv preprint arXiv:1905.10866,  (2019).

\bibitem{folkestad2021koopman}
{\sc C.~Folkestad and J.~W. Burdick}, {\em {Koopman NMPC: Koopman-based learning and nonlinear model predictive control of control-affine systems}}, in 2021 IEEE International Conference on Robotics and Automation (ICRA), IEEE, 2021, pp.~7350--7356.

\bibitem{folkestad2020extended}
{\sc C.~Folkestad, D.~Pastor, I.~Mezic, R.~Mohr, M.~Fonoberova, and J.~Burdick}, {\em {Extended dynamic mode decomposition with learned Koopman eigenfunctions for prediction and control}}, in 2020 american control conference (acc), IEEE, 2020, pp.~3906--3913.

\bibitem{glorot2010understanding}
{\sc X.~Glorot and Y.~Bengio}, {\em Understanding the difficulty of training deep feedforward neural networks}, in Proceedings of the thirteenth international conference on artificial intelligence and statistics, JMLR Workshop and Conference Proceedings, 2010, pp.~249--256.

\bibitem{gonzalez2021anti}
{\sc E.~Gonzalez, M.~Abudia, M.~Jury, R.~Kamalapurkar, and J.~A. Rosenfeld}, {\em Anti-koopmanism}, arXiv preprint arXiv:2106.00106,  (2021).

\bibitem{goswami2017global}
{\sc D.~Goswami and D.~A. Paley}, {\em {Global bilinearization and controllability of control-affine nonlinear systems: A Koopman spectral approach}}, in 2017 IEEE 56th Annual Conference on Decision and Control (CDC), IEEE, 2017, pp.~6107--6112.

\bibitem{grune2017nonlinear}
{\sc L.~Gr{\"u}ne, J.~Pannek, L.~Gr{\"u}ne, and J.~Pannek}, {\em Nonlinear model predictive control}, Springer, 2017.

\bibitem{guo2023learningcode}
{\sc Y.~Guo, M.~Korda, I.~G. Kevrekidis, and Q.~Li}, {\em {Code Repository for Learning Parametric Koopman Decompositions}}, \url{https://github.com/GUOYUE-Cynthia/Learning-Parametric-Koopman-Decompositions}.

\bibitem{haggerty2020modeling}
{\sc D.~A. Haggerty, M.~J. Banks, P.~C. Curtis, I.~Mezi{\'c}, and E.~W. Hawkes}, {\em {Modeling, reduction, and control of a helically actuated inertial soft robotic arm via the Koopman operator}}, arXiv preprint arXiv:2011.07939,  (2020).

\bibitem{haseli2022temporal}
{\sc M.~Haseli and J.~Cort{\'e}s}, {\em Temporal forward--backward consistency, not residual error, measures the prediction accuracy of extended dynamic mode decomposition}, IEEE Control Systems Letters, 7 (2022), pp.~649--654.

\bibitem{haseli2023modeling}
{\sc M.~Haseli and J.~Cort{\'e}s}, {\em {Modeling Nonlinear Control Systems via Koopman Control Family: Universal Forms and Subspace Invariance Proximity}}, arXiv preprint arXiv:2307.15368,  (2023).

\bibitem{heas2022low}
{\sc P.~H{\'e}as and C.~Herzet}, {\em Low-rank dynamic mode decomposition: An exact and tractable solution}, Journal of Nonlinear Science, 32 (2022), p.~8.

\bibitem{hille1996functional}
{\sc E.~Hille and R.~S. Phillips}, {\em Functional analysis and semi-groups}, vol.~31, American Mathematical Soc., 1996.

\bibitem{huhn2023parametric}
{\sc Q.~A. Huhn, M.~E. Tano, J.~C. Ragusa, and Y.~Choi}, {\em Parametric dynamic mode decomposition for reduced order modeling}, Journal of Computational Physics, 475 (2023), p.~111852.

\bibitem{jianliang2021nonlinear}
{\sc H.~Jianliang, W.~Teng, and C.~Shuhui}, {\em Nonlinear dynamic analysis of a van der pol-mathieu equation with external excitation}, Chinese Journal of Theoretical and Applied Mechanics, 53 (2021), pp.~496--510.

\bibitem{kaiser2017data}
{\sc E.~Kaiser, J.~N. Kutz, and S.~L. Brunton}, {\em {Data-driven discovery of Koopman eigenfunctions for control}}, arXiv preprint arXiv:1707.01146,  (2017).

\bibitem{kingma2014adam}
{\sc D.~P. Kingma and J.~Ba}, {\em Adam: A method for stochastic optimization}, arXiv preprint arXiv:1412.6980,  (2014).

\bibitem{koopman1931hamiltonian}
{\sc B.~O. Koopman}, {\em Hamiltonian systems and transformation in hilbert space}, Proceedings of the national academy of sciences of the united states of america, 17 (1931), p.~315.

\bibitem{korda2018linear}
{\sc M.~Korda and I.~Mezi{\'c}}, {\em {Linear predictors for nonlinear dynamical systems: Koopman operator meets model predictive control}}, Automatica, 93 (2018), pp.~149--160.

\bibitem{korda2018convergence}
{\sc M.~Korda and I.~Mezi{\'c}}, {\em {On convergence of extended dynamic mode decomposition to the Koopman operator}}, Journal of Nonlinear Science, 28 (2018), pp.~687--710.

\bibitem{korda2020optimal}
{\sc M.~Korda and I.~Mezi{\'c}}, {\em {Optimal construction of Koopman eigenfunctions for prediction and control}}, IEEE Transactions on Automatic Control, 65 (2020), pp.~5114--5129.

\bibitem{li2017extended}
{\sc Q.~Li, F.~Dietrich, E.~M. Bollt, and I.~G. Kevrekidis}, {\em {Extended dynamic mode decomposition with dictionary learning: A data-driven adaptive spectral decomposition of the Koopman operator}}, Chaos: An Interdisciplinary Journal of Nonlinear Science, 27 (2017), p.~103111.

\bibitem{lin2021data}
{\sc K.~K. Lin and F.~Lu}, {\em {Data-driven model reduction, Wiener projections, and the Koopman-Mori-Zwanzig formalism}}, Journal of Computational Physics, 424 (2021), p.~109864.

\bibitem{liu1989limited}
{\sc D.~C. Liu and J.~Nocedal}, {\em On the limited memory bfgs method for large scale optimization}, Mathematical programming, 45 (1989), pp.~503--528.

\bibitem{ljung2020deep}
{\sc L.~Ljung, C.~Andersson, K.~Tiels, and T.~B. Sch{\"o}n}, {\em Deep learning and system identification}, IFAC-PapersOnLine, 53 (2020), pp.~1175--1181.

\bibitem{lusch2018deep}
{\sc B.~Lusch, J.~N. Kutz, and S.~L. Brunton}, {\em Deep learning for universal linear embeddings of nonlinear dynamics}, Nature communications, 9 (2018), p.~4950.

\bibitem{mauroy2020koopman}
{\sc A.~Mauroy, Y.~Susuki, and I.~Mezi{\'c}}, {\em Koopman operator in systems and control}, Springer, 2020.

\bibitem{miura1976korteweg}
{\sc R.~M. Miura}, {\em The korteweg--devries equation: a survey of results}, SIAM review, 18 (1976), pp.~412--459.

\bibitem{narasingam2020application}
{\sc A.~Narasingam and J.~S.-I. Kwon}, {\em {Application of Koopman operator for model-based control of fracture propagation and proppant transport in hydraulic fracturing operation}}, Journal of Process Control, 91 (2020), pp.~25--36.

\bibitem{otto2019linearly}
{\sc S.~E. Otto and C.~W. Rowley}, {\em Linearly recurrent autoencoder networks for learning dynamics}, SIAM Journal on Applied Dynamical Systems, 18 (2019), pp.~558--593.

\bibitem{pan2020physics}
{\sc S.~Pan and K.~Duraisamy}, {\em Physics-informed probabilistic learning of linear embeddings of nonlinear dynamics with guaranteed stability}, SIAM Journal on Applied Dynamical Systems, 19 (2020), pp.~480--509.

\bibitem{parmar2020survey}
{\sc N.~Parmar, H.~Refai, and T.~Runolfsson}, {\em {A Survey on the Methods and Results of Data-Driven Koopman Analysis in the Visualization of Dynamical Systems}}, IEEE Transactions on Big Data,  (2020).

\bibitem{peitz2019koopman}
{\sc S.~Peitz and S.~Klus}, {\em {Koopman operator-based model reduction for switched-system control of PDEs}}, Automatica, 106 (2019), pp.~184--191.

\bibitem{peitz2020data}
{\sc S.~Peitz, S.~E. Otto, and C.~W. Rowley}, {\em {Data-driven model predictive control using interpolated Koopman generators}}, SIAM Journal on Applied Dynamical Systems, 19 (2020), pp.~2162--2193.

\bibitem{proctor2016dynamic}
{\sc J.~L. Proctor, S.~L. Brunton, and J.~N. Kutz}, {\em Dynamic mode decomposition with control}, SIAM Journal on Applied Dynamical Systems, 15 (2016), pp.~142--161.

\bibitem{proctor2018generalizing}
{\sc J.~L. Proctor, S.~L. Brunton, and J.~N. Kutz}, {\em {Generalizing Koopman theory to allow for inputs and control}}, SIAM Journal on Applied Dynamical Systems, 17 (2018), pp.~909--930.

\bibitem{rashevsky1938connecting}
{\sc P.~Rashevsky}, {\em About connecting two points of a completely nonholonomic space by admissible curve}, Uch. Zapiski Ped. Inst. Libknechta, 2 (1938), pp.~83--94.

\bibitem{rowley2009spectral}
{\sc C.~W. Rowley, I.~Mezi{\'c}, S.~Bagheri, P.~Schlatter, and D.~S. Henningson}, {\em Spectral analysis of nonlinear flows}, Journal of fluid mechanics, 641 (2009), pp.~115--127.

\bibitem{schmid2010dynamic}
{\sc P.~J. Schmid}, {\em Dynamic mode decomposition of numerical and experimental data}, Journal of fluid mechanics, 656 (2010), pp.~5--28.

\bibitem{shi2022deep}
{\sc H.~Shi and M.~Q.-H. Meng}, {\em {Deep Koopman operator with control for nonlinear systems}}, IEEE Robotics and Automation Letters, 7 (2022), pp.~7700--7707.

\bibitem{singh1993composition}
{\sc R.~K. Singh and J.~S. Manhas}, {\em Composition operators on function spaces}, Elsevier, 1993.

\bibitem{slotine1991applied}
{\sc J.-J.~E. Slotine, W.~Li, et~al.}, {\em Applied nonlinear control}, vol.~199, Prentice hall Englewood Cliffs, NJ, 1991.

\bibitem{son2021application}
{\sc S.~H. Son, H.-K. Choi, and J.~S.-I. Kwon}, {\em {Application of offset-free Koopman-based model predictive control to a batch pulp digester}}, AIChE Journal, 67 (2021), p.~e17301.

\bibitem{strasser2023robust}
{\sc R.~Str{\"a}sser, J.~Berberich, and F.~Allg{\"o}wer}, {\em {Robust data-driven control for nonlinear systems using the Koopman operator}}, arXiv preprint arXiv:2304.03519,  (2023).

\bibitem{sun2023parametric}
{\sc S.~Sun, L.~Feng, H.~S. Chan, T.~Mili{\v{c}}i{\'c}, T.~Vidakovi{\'c}-Koch, and P.~Benner}, {\em Parametric dynamic mode decomposition for nonlinear parametric dynamical systems}, arXiv preprint arXiv:2305.06197,  (2023).

\bibitem{surana2016koopman}
{\sc A.~Surana}, {\em {Koopman operator based observer synthesis for control-affine nonlinear systems}}, in 2016 IEEE 55th Conference on Decision and Control (CDC), IEEE, 2016, pp.~6492--6499.

\bibitem{takeishi2017learning}
{\sc N.~Takeishi, Y.~Kawahara, and T.~Yairi}, {\em Learning koopman invariant subspaces for dynamic mode decomposition}, Advances in neural information processing systems, 30 (2017).

\bibitem{tanyu2022deep}
{\sc D.~N. Tanyu, J.~Ning, T.~Freudenberg, N.~Heilenk{\"o}tter, A.~Rademacher, U.~Iben, and P.~Maass}, {\em Deep learning methods for partial differential equations and related parameter identification problems}, arXiv preprint arXiv:2212.03130,  (2022).

\bibitem{tu2014dynamic}
{\sc J.~H. Tu, C.~W. Rowley, D.~M. Luchtenburg, S.~L. Brunton, and J.~N. Kutz}, {\em On dynamic mode decomposition: Theory and applications}, Journal of Computational Dynamics, 1 (2014), pp.~391--421.

\bibitem{williams2016extending}
{\sc M.~O. Williams, M.~S. Hemati, S.~T. Dawson, I.~G. Kevrekidis, and C.~W. Rowley}, {\em {Extending data-driven Koopman analysis to actuated systems}}, IFAC-PapersOnLine, 49 (2016), pp.~704--709.

\bibitem{williams2015data}
{\sc M.~O. Williams, I.~G. Kevrekidis, and C.~W. Rowley}, {\em {A data--driven approximation of the Koopman operator: Extending dynamic mode decomposition}}, Journal of Nonlinear Science, 25 (2015), pp.~1307--1346.

\bibitem{williams2014kernel}
{\sc M.~O. Williams, C.~W. Rowley, and I.~G. Kevrekidis}, {\em {A kernel-based approach to data-driven Koopman spectral analysis}}, arXiv preprint arXiv:1411.2260,  (2014).

\bibitem{yeung2019learning}
{\sc E.~Yeung, S.~Kundu, and N.~Hodas}, {\em {Learning deep neural network representations for Koopman operators of nonlinear dynamical systems}}, in 2019 American Control Conference (ACC), IEEE, 2019, pp.~4832--4839.

\end{thebibliography}
\end{document}